\newtheorem{theorem}{Theorem}[section]
\newtheorem{definition}[theorem]{Definition}
\newtheorem{lemma}[theorem]{Lemma}
\newtheorem{proposition}[theorem]{Proposition}
\newtheorem{corollary}[theorem]{Corollary}
\newtheorem{remark}[theorem]{Remark}
\newtheorem{examplecore}[theorem]{Example}}
\newenvironment{proofof}[1]{\vspace{.2cm}\noindent\textsc{Proof of
    #1:}}{\hspace*{\fill} $\blacksquare$\par\vspace{.1cm}}
\newenvironment{example}{\begin{examplecore}}{\hspace*{\fill}
$\square$\par\vspace{.1cm}\end{examplecore}}
\newcommand{\op}{\operatorname}
\newcommand{\ret}{\mathrm{r\acute{e}t}}
\newcommand{\Nis}{\mathrm{Nis}}
\begin{document}

\title{Chow--Witt rings of Grassmannians}  

\author{Matthias Wendt}

\date{March 2020}

\address{Matthias Wendt, Fachgruppe Mathematik/Informatik, Bergische Universit\"at Wuppertal, Gaussstrasse 20, 42119 Wuppertal, Germany}
\email{m.wendt.c@gmail.com}

\subjclass[2010]{}
\keywords{Chow--Witt rings, classifying spaces, vector bundles, characteristic classes, Grassmannians}

\begin{abstract}
We complement the previous computation of the Chow--Witt rings of classifying spaces of special linear groups by an analogous computation for the general linear groups. This case involves discussion of non-trivial dualities. The computation proceeds along the lines of the classical computation of the integral cohomology of ${\op{B}}\op{O}(n)$ with local coefficients, as done by \v Cadek. The computations of Chow--Witt rings of classifying spaces of $\op{GL}_n$ are then used to compute the Chow--Witt rings of the finite Grassmannians. As before, the formulas are close parallels of the formulas describing integral cohomology rings of real Grassmannians. 
\end{abstract}

\maketitle
\setcounter{tocdepth}{1}
\tableofcontents

\section{Introduction}

The present paper is a sequel to \cite{chow-witt} which computed the Chow--Witt rings of classifying spaces of the symplectic and special linear groups. The present paper  provides a similar computation of the total Chow--Witt ring of ${\op{B}}\op{GL}_n$, essentially by a combination and extension of the techniques developed in \cite{chow-witt} and \v Cadek's computation of cohomology of ${\op{B}}\op{O}(n)$ with twisted coefficients in \cite{cadek}. Once the characteristic classes for vector bundles and their relations are known, we can also compute the Chow--Witt rings of finite Grassmannians. It turns out that the formulas describing the Chow--Witt rings of Grassmannians are direct analogues of the classical formulas for integral cohomology of real Grassmannians. 

\subsection{Chow--Witt rings of infinite Grassmannians}
We first formulate the results for the infinite Grassmannian ${\op{B}}\op{GL}_n$. In this case, there are essentially two dualities to consider, the trivial duality and the nontrivial one given by the determinant $\det\gamma_n^\vee$ of the universal rank $n$ bundle $\gamma_n$. The following result describes the total Chow--Witt ring of ${\op{B}}\op{GL}_n$, cf. Theorems~\ref{thm:glnin} and \ref{thm:glnchw} and Proposition~\ref{prop:kerpartial}. 

\begin{theorem}
\label{thm:main1}
Let $F$ be a perfect field of characteristic $\neq 2$. 
\begin{enumerate}
\item The following square, induced from the description of the Milnor--Witt K-theory sheaf, is a cartesian square of $\mathbb{Z}\oplus\mathbb{Z}/2\mathbb{Z}$-graded $\op{GW}(F)$-algebras: 
\[
\xymatrix{
\widetilde{\op{CH}}^\bullet({\op{B}}\op{GL}_n,\mathcal{O}\oplus\det \gamma_n^\vee) \ar[rr] \ar[d] && \ker\partial_{\mathscr{O}}\oplus\ker\partial_{\det\gamma_n^\vee} \subseteq \op{CH}^\bullet({\op{B}}\op{GL}_n)^{\oplus 2} \ar[d]^{\bmod 2} \\
\op{H}^\bullet_{\Nis}({\op{B}}\op{GL}_n,\mathbf{I}^\bullet\oplus\mathbf{I}^\bullet(\det\gamma_n^\vee))\ar[rr]_\rho && \op{Ch}^\bullet({\op{B}}\op{GL}_n)^{\oplus 2}.
}
\]
Here $\det\gamma_n$ is the determinant of the tautological rank $n$ bundle on ${\op{B}}\op{GL}_n$,  $\op{Ch}:=\op{CH}/2$ denotes mod 2 Chow groups, 
\[
\partial_{\mathscr{L}}\colon\op{CH}^\bullet({\op{B}}\op{GL}_n) \to\op{Ch}^\bullet({\op{B}}\op{GL}_n) \xrightarrow{\beta_{\mathscr{L}}} \op{H}^{\bullet+1}({\op{B}}\op{GL}_n, \mathbf{I}^{\bullet+1}(\mathscr{L}))
\]
is the (twisted) integral Bockstein operation. 
\item We have 
\[
\ker\partial_{\mathscr{O}}=\mathbb{Z}[\op{c}_i^2, 2\op{c}_{i_1}\cdots\op{c}_{i_k}, {\op{c}_1}{\op{c}_{2i}}+{\op{c}_{2i+1}},{\op{c}_1}{\op{c}_n}]\subseteq\mathbb{Z}[\op{c}_1,\dots,\op{c}_n]\cong \op{CH}^\bullet({\op{B}}\op{GL}_n),
\]
\[
\ker\partial_{\det\gamma_n^\vee}=\mathbb{Z}[\op{c}_{2i+1}, 2\op{c}_{2i_1}\cdots\op{c}_{2i_k},{\op{c}}_{2i}^2, \op{c}_n]\subseteq\mathbb{Z}[\op{c}_1,\dots,\op{c}_n]\cong \op{CH}^\bullet({\op{B}}\op{GL}_n).
\]
\item The cohomology ring $\op{H}^\bullet_{\Nis}({\op{B}}\op{GL}_n,\mathbf{I}^\bullet\oplus\mathbf{I}^\bullet(\det\gamma_n^\vee))$ can be explicitly identified as the $\mathbb{Z}\oplus\mathbb{Z}/2\mathbb{Z}$-graded-commutative $\op{W}(F)$-algebra 
\[
\op{W}(F)\left[\op{p}_2,\op{p}_4,\dots,\op{p}_{[(n-1)/2]},\op{e}_n, \{\beta_J\}_J, \{\tau_J\}_J, \tau_\emptyset\right]
\]
modulo the following relations:
\begin{enumerate}
\item $\op{I}(F) \beta_J=\op{I}(F)\tau_J=\op{I}(F)\tau_\emptyset=0$.
\item If $n=2k+1$ is odd, $\op{e}_{2k+1}=\tau_{\{k\}}$. 
\item For two index sets $J$ and $J'$ where $J'$ can be empty, we have 
\begin{eqnarray*}
\beta_J\cdot \beta_{J'}&=&\sum_{k\in J} \beta_{\{k\}}\cdot \op{p}_{(J\setminus\{k\})\cap J'}\cdot \beta_{\Delta(J\setminus\{k\},J')}\\
\beta_J\cdot \tau_{J'}&=&\sum_{k\in J}\beta_{\{k\}}\cdot \op{p}_{(J\setminus\{k\})\cap J'}\cdot \tau_{\Delta(J\setminus\{k\},J')}\\
\tau_J\cdot \beta_{J'}&=&\beta_J\cdot \tau_{J'} + \tau_\emptyset \cdot \op{p}_{J\cap J'}\cdot \beta_{\Delta(J,J')}\\
\tau_J\cdot \tau_{J'}&=&\beta_J\cdot \beta_{J'}+\tau_\emptyset\cdot \op{p}_{J\cap J'}\cdot \tau_{\Delta(J,J')}.
\end{eqnarray*}
where we set $\op{p}_A=\prod_{i=1}^l \op{p}_{a_i}$ for an index set $A=\{a_1,\dots,a_l\}$.
\end{enumerate}

In the above, the $\op{p}_{2i}$ are even Pontryagin classes in degree $(4i,0)$, $\op{e}_n$ is the Euler class in degree $(n,1)$. For the classes $\beta_J$ and $\tau_J$, the index set $J$ runs through the sets $\{j_1,\dots,j_l\}$ of natural numbers with $0<j_1<\dots<j_l\leq[(n-1)/2]$ and we set
\[
\beta_J=\beta_{\mathscr{O}}(\overline{\op{c}}_{2j_1}\cdots \overline{\op{c}}_{2j_l}), \textrm{ and } \tau_J=\beta_{\det\gamma_n^\vee}(\overline{\op{c}}_{2j_1}\cdots \overline{\op{c}}_{2j_l}).
\]
For the index set $J=\{j_1,\dots,j_l\}$, the degree of $\beta_J$  is $(1+2\sum_{i=1}^l j_i,0)$ and the degree of $\tau_J$ is $(1+2\sum_{i=1}^l j_i,1)$. 
\item The reduction morphism $\rho$ is given by 
\[
\op{p}_{2i}\mapsto \overline{\op{c}}_{2i}^2, \quad \op{e}_n\mapsto\overline{\op{c}}_n \quad \textrm{ and } \quad \beta_{\mathscr{L}}(\overline{\op{c}}_{2j_1}\cdots\overline{\op{c}}_{2j_l}) \mapsto \op{Sq}^2_{\mathscr{L}}(\overline{\op{c}}_{2j_1}\cdots\overline{\op{c}}_{2j_l}). 
\] 
Under the homomorphism $\widetilde{\op{CH}}^\bullet({\op{B}}\op{GL}_n,\mathcal{O}) \to\op{CH}^\bullet({\op{B}}\op{GL}_n)$, the Chow--Witt-theoretic Pontryagin class maps as
\[
\op{p}_i\mapsto (-1)^i\op{c}_i^2+2\sum_{j=\max\{0,2i-n\}}^{i-1}(-1)^j\op{c}_j\op{c}_{2i-j}. 
\]
\end{enumerate}
\end{theorem}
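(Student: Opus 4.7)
The overall plan is to decompose the total Chow--Witt ring into separable pieces using the Milnor--Witt K-theory pullback square, compute each piece using a combination of established Chern class identities and \v{C}adek's techniques, and then reassemble. The guiding analogy throughout is with the integral cohomology of ${\op{B}}\op{O}(n)$ with local coefficients \cite{cadek}, combined with the previously established case of ${\op{B}}\op{SL}_n$ from \cite{chow-witt}.

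Part (1) is essentially formal: the sheaf $\mathbf{K}^{\op{MW}}_\bullet$ sits in a cartesian square with $\mathbf{K}^{\op{M}}_\bullet$, $\mathbf{I}^\bullet$, and the common mod 2 reduction. Taking twisted Nisnevich hypercohomology on ${\op{B}}\op{GL}_n$ with $\mathscr{L}\in\{\mathscr{O},\det\gamma_n^\vee\}$ produces a cartesian square of cohomology groups once one verifies that the associated long exact sequence degenerates --- this follows from the definition of $\partial_\mathscr{L}$ as the connecting map together with the known description of $\op{CH}^\bullet({\op{B}}\op{GL}_n)$. Summing over the two twists yields the stated square. For part (2), I would compute $\partial_\mathscr{L}$ on Chern class generators via the twisted Wu-type formula in mod 2 Chow cohomology, obtaining expressions of the form $\op{c}_1\op{c}_i+(i+1)\op{c}_{i+1}\bmod 2$ for the trivial twist, with a parity-swapped analogue for the nontrivial twist. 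The kernel becomes a combinatorial calculation in $\mathbb{F}_2[\overline{\op{c}}_1,\dots,\overline{\op{c}}_n]$, and integral lifts are recovered via the short exact sequence $0\to\op{CH}^\bullet\xrightarrow{2}\op{CH}^\bullet\to\op{Ch}^\bullet\to 0$.

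The main technical obstacle is part (3): the explicit presentation of $\op{H}^\bullet_{\Nis}({\op{B}}\op{GL}_n,\mathbf{I}^\bullet\oplus\mathbf{I}^\bullet(\det\gamma_n^\vee))$ as a $\op{W}(F)$-algebra by generators and relations. I would follow \v{C}adek's strategy: combine the long exact sequences relating $\mathbf{I}^\bullet$-cohomology to $\mathbf{W}$-cohomology and to mod 2 Chow groups with an induction on $n$ via the sphere-bundle-like fibration ${\op{B}}\op{GL}_{n-1}\to{\op{B}}\op{GL}_n$ whose Gysin sequence features $\op{e}_n$. The Pontryagin and Euler classes survive to mod 2 and account for the free $\op{W}(F)$-summand, while the classes $\beta_J$ and $\tau_J$ arise as $\op{I}(F)$-torsion lifts of the $\op{Sq}^2$-images in Chow mod 2, which accounts for the annihilation relations in (3)(a). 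The multiplicative relations (3)(c) are the most delicate step: since both sides of each claimed identity lie in the $\op{I}(F)$-torsion part, they can be checked after applying $\rho$, where they reduce to identities of Steenrod squares. The symmetric difference combinatorics $\Delta(J,J')$ emerges precisely from iterated application of the Cartan formula $\op{Sq}^2(xy)=\op{Sq}^2(x)y+\op{Sq}^1(x)\op{Sq}^1(y)+x\op{Sq}^2(y)$ (with the appropriate twist) on products of mod 2 Chern classes; the extra $\tau_\emptyset$-summand in the $\tau_J\cdot\tau_{J'}$ and $\tau_J\cdot\beta_{J'}$ relations reflects the failure of $\op{Sq}^2_\mathscr{L}$ to be a derivation in the twisted setting.

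Part (4) is then a straightforward verification on generators: $\rho(\op{p}_{2i})=\overline{\op{c}}_{2i}^2$ follows from the splitting principle and the standard identity $c(E)c(E^\vee)=\sum(-1)^i\op{p}_i$ reduced mod 2; $\rho(\op{e}_n)=\overline{\op{c}}_n$ is the definition of the Chow--Witt Euler class; and the images of $\beta_J,\tau_J$ are their defining $\op{Sq}^2_\mathscr{L}$-expressions by construction of the Bockstein. The integral formula $\op{p}_i\mapsto(-1)^i\op{c}_i^2+2\sum_j(-1)^j\op{c}_j\op{c}_{2i-j}$ in $\op{CH}^\bullet$ is obtained by computing $c_i(E\oplus E^\vee)$ integrally via the splitting principle, prior to any mod 2 reduction.
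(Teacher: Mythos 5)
Your overall route---the fiber-product square coming from the Milnor--Witt sheaf description together with the $2$-torsion-freeness of $\op{CH}^\bullet({\op{B}}\op{GL}_n)$, a \v Cadek-style induction along ${\op{B}}\op{GL}_{n-1}\to{\op{B}}\op{GL}_n$, the Wu formula, and detection of the torsion relations in mod $2$ Chow theory---is the same as the paper's. But there is a genuine gap at the load-bearing step. You justify verifying the multiplicative relations (3)(c), and implicitly the identification of $\ker\partial_{\mathscr{L}}$ in part (2) with the preimage of $\ker\op{Sq}^2_{\mathscr{L}}$, by saying that the classes involved are $\op{I}(F)$-torsion and hence ``can be checked after applying $\rho$''. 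That inference is not valid: by the B\"ar sequence the kernel of $\rho\colon\op{H}^n(X,\mathbf{I}^n(\mathscr{L}))\to\op{Ch}^n(X)$ is exactly the image of $\eta$, and $\op{I}(F)$-torsion classes can perfectly well be $\eta$-divisible, so torsionness alone does not make $\rho$ injective on them. What is actually required is that $\rho$ is injective on $\op{Im}\beta_{\mathscr{L}}$ (equivalently $\op{Im}\beta_{\mathscr{L}}\cap\op{Im}\eta=0$), and this is precisely the crux of the whole computation. The paper gets it from Lemma~\ref{lem:wsplit}: if the $\mathbf{W}$-cohomology is free as a $\op{W}(F)$-module, the B\"ar sequence splits, $\op{H}^n(\mathbf{I}^n(\mathscr{L}))\cong\op{Im}\beta_{\mathscr{L}}\oplus\op{H}^n(\mathbf{W}(\mathscr{L}))$, and $\rho$ is injective on $\op{Im}\beta_{\mathscr{L}}$; the freeness for ${\op{B}}\op{GL}_n$ is then proved in Proposition~\ref{prop:glnw} by induction using the $\mathbf{W}$-localization sequence, which (unlike the $\mathbf{I}$-cohomology Gysin sequence you propose to induct with) has no coefficient shifts. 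Note that the same injectivity is needed already for part (2), since $\partial_{\mathscr{L}}$ takes values in $\mathbf{I}$-cohomology, not in $\op{Ch}^\bullet$, so $\ker\partial_{\mathscr{L}}=\ker(\op{Sq}^2_{\mathscr{L}}\circ\bmod 2)$ is not automatic. Without establishing this injectivity (or a substitute, such as the older and more laborious arguments of \cite{chow-witt}), both your kernel computation and your verification of the torsion relations are unsupported as written.

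Two smaller points. First, the Cartan formula you invoke, $\op{Sq}^2(xy)=\op{Sq}^2(x)y+\op{Sq}^1(x)\op{Sq}^1(y)+x\op{Sq}^2(y)$, is the topological one; in the present setting $\op{Sq}^2_{\mathscr{O}}$ acts as a derivation on $\op{Ch}^\bullet$ and the twisted square satisfies $\op{Sq}^2_{\det\gamma_n^\vee}(x)=\overline{\op{c}}_1x+\op{Sq}^2_{\mathscr{O}}(x)$, which is what actually produces the $\tau_\emptyset$-terms and the symmetric-difference combinatorics. Second, your sketch does not address why the listed generators exhaust the cohomology: the paper does this by showing (Corollary~\ref{cor:wuimage}) that the image of $\op{Sq}^2_{\mathscr{L}}$ lies in the subring generated by the stated classes, and separately that the candidate presentation surjects onto the free $\mathbf{W}$-part; some such argument is needed to complete part (3).
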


It can be shown that formulas similar to the above description of $\mathbf{I}^\bullet$-cohomology are true for real-\'etale cohomology, 
but as algebra over $\op{H}^0_{\ret}(F,\mathbb{Z})\cong\op{colim}_n\op{I}^n(F)$. For $F=\mathbb{R}$, the real cycle class map induces an isomorphism  
\[
\op{H}^\bullet({\op{B}}\op{GL}_n,\mathbf{I}^\bullet\oplus\mathbf{I}^\bullet(\det\gamma_n^\vee))\xrightarrow{\cong} \op{H}^\bullet({\op{B}}\op{O}(n),\mathbb{Z}\oplus\mathbb{Z}^{\op{t}})
\]
where the target was computed in \cite{cadek}, and it sends algebraic characteristic classes to their topological counterparts. For this result and a discussion of the required compatibilities e.g. between localization sequences and the real cycle class maps, cf.~\cite{4real}.

\subsection{Chow--Witt rings of finite Grassmannians}

The second point of the paper is to provide a computation of the Chow--Witt rings of the finite Grassmannians $\op{Gr}(k,n)$. The full description is even longer than the description of the Chow--Witt ring of ${\op{B}}\op{GL}_n$, so we will only give pointers to the main results in the text. First, the Chow--Witt ring is again given in terms of a cartesian square combining the kernels of integral Bockstein maps with $\mathbf{I}^\bullet$-cohomology, cf. Theorem~\ref{thm:chowwittgrass}. The $\mathbf{I}^\bullet$-cohomology of $\op{Gr}(k,n)$ can be described as follows: the characteristic classes of the tautological rank $k$ subbundle $\mathscr{S}_k$ and the tautological rank $(n-k)$ quotient bundle $\mathscr{Q}_{n-k}$ generate the $\mathbf{I}^\bullet$-cohomology, except in the case where $k(n-k)$ is odd in which we have a new class $\op{R}$ in degree $n-1$. They naturally satisfy the relations in the $\mathbf{I}^\bullet$-cohomology of ${\op{B}}\op{GL}_k$ and ${\op{B}}\op{GL}_{n-k}$, and they also satisfy the relations which are consequences of the Whitney sum formula for the extension
\[
0\to \mathscr{S}_k\to\mathscr{O}^{\oplus n}\to \mathscr{Q}_{n-k}\to 0.
\]
There are a few further relations involving the potential class $\op{R}$. All these statements are established in Theorem~\ref{thm:ingrass}. The reduction morphisms
\[
\op{H}^\bullet(\op{Gr}(k,n),\mathbf{I}^\bullet(\mathscr{L}))\to \op{Ch}^\bullet(\op{Gr}(k,n))
\]
are described in Proposition~\ref{prop:inredgrass}. Except for the new fact that $\op{R}\mapsto \overline{\op{c}}_{k-1}\overline{\op{c}}_{n-k}^\perp$, the description of the reduction morphisms follows directly from the ones for ${\op{B}}\op{GL}_k$ and ${\op{B}}\op{GL}_{n-k}$. This also provides a description of the kernel of the integral Bockstein maps, cf. Theorem~\ref{thm:chowwittgrass}. Again, similar formulas would be true in real-\'etale cohomology, and for $F=\mathbb{R}$ the above description recovers exactly the integral cohomology of the real Grassmannians $\op{Gr}_k(\mathbb{R}^n)$ (with local coefficients), cf.~\cite{4real}. 

\subsection{Decomposition of $\mathbf{I}$-cohomology}
The present paper is a significantly revised version of its predecessor. While the previous version established the results mostly following the proof strategy of \cite{chow-witt} fairly closely, the revised proofs follow a different strategy. The key new insight arises from a decomposition of $\mathbf{I}$-cohomology in the B\"ar sequence
\[
0\to \op{Im}\beta_{\mathscr{L}}(X)\to \op{H}^q(X,\mathbf{I}^q(\mathscr{L}))\to \op{H}^q(X,\mathbf{W}(\mathscr{L}))\to 0.
\]
The cohomology with coefficients in the sheaf $\mathbf{W}$ of Witt rings is a theory in which $\eta$ is invertible, and much more amenable to long exact sequence calculations than $\mathbf{I}$-cohomology, cf. work of Ananyevskiy \cite{ananyevskiy}. Moreover, if $\mathbf{W}$-cohomology is free, the above sequence splits. In that case, the reduction morphism $\rho\colon \op{H}^q(X,\mathbf{I}^q(\mathscr{L}))\to \op{Ch}^q(X)$ is injective on the image of $\beta_{\mathscr{L}}$, hence the torsion classes in $\op{Im}\beta_{\mathscr{L}}$ can be computed from the knowledge of the Steenrod squares $\op{Sq}^2_{\mathscr{L}}$ on the mod 2 Chow ring. This way, the computation of $\mathbf{I}$-cohomology splits into two significantly easier parts, the computation of $\mathbf{W}$-cohomology which can be done by the same methods as calculations of rational cohomology of real Grassmannians, cf. \cite{sadykov} and \cite{milnor:stasheff}, and the computation of $\op{Im}\beta$ which only requires knowledge of the mod 2 Chow theory. The freeness of $\mathbf{W}$-cohomology which implies the above decomposition of $\mathbf{I}$-cohomology can therefore be seen as the algebraic analogue of the statement that ``all torsion in the cohomology of real Grassmannians is 2-torsion''. This gets rid of  problems as in Remark~2.2 or Remark~7.2 of \cite{chow-witt}.  Moreover, the proof of freeness of $\mathbf{W}$-cohomology and therefore the torsion statement is significantly easier than in topology (where it is not immediately clear that integral cohomology modulo the image of the integral Bockstein maps is even a cohomology theory). The $\op{Im}\beta$-$\mathbf{W}$-decomposition of $\mathbf{I}$-cohomology will be a useful tool for a number of further computations (where the real topological counterparts have only 2-torsion), such as classifying spaces of orthogonal groups and flag varieties.

The shorter, alternative way to describe the structure of the $\mathbf{I}$-cohomology of ${\op{B}}\op{GL}_n$ or $\op{Gr}(k,n)$ (at least additively) is then the following. The $\mathbf{I}$-cohomology splits as a direct sum of the image of $\beta_{\mathscr{L}}$, which is a 2-torsion group with the same structure as in the integral cohomology of the real Grassmannians, and the $\mathbf{W}$-cohomology, which is a free $\op{W}(F)$-algebra having the same presentation as the rational cohomology of the real Grassmannians. The multiplication on the torsion part can be described completely by reduction to mod 2 Chow theory where we have the classical formulas from Schubert calculus. Conceptual descriptions for the multiplication can be found in \cite{casian:stanton} (with an interesting link to representation theory of real Lie groups) and \cite{casian:kodama} (explicitly in terms of signed Young diagrams), cf. also the discussion of checkerboard fillings for Young diagrams to compute $\op{Sq}^2_{\mathscr{L}}$ in \cite{schubert}. The description of Chow--Witt rings of finite Grassmannians is used in a sequel \cite{schubert} to develop an oriented Schubert calculus which allows to establish arithmetic refinements of classical Schubert calculus. 

\emph{Structure of the paper:}
We provide a recollection on relevant statements from Chow--Witt theory, in particular the twisted Steenrod squares, in Section~\ref{sec:recall}. The relevant characteristic classes for vector bundles are recalled in Section~\ref{sec:vbclasses}, where we also formulate the main structural results on the Chow--Witt ring of ${\op{B}}\op{GL}_n$. The inductive computation of the $\mathbf{I}^\bullet$-cohomology is done in Section~\ref{sec:inproofs}. The results on Chow--Witt rings of finite Grassmannians are formulated in Section~\ref{sec:chowwittgrass} and the proofs are given in Section~\ref{sec:grassproofs}. 

\emph{Acknowledgements:} I would like to thank Jens Hornbostel and Kirsten Wickelgren for stimulating discussions about this and related topics without which this paper wouldn't exist. Various discussions with Jens Hornbostel, Thomas Hudson, Lorenzo Mantovani, Toan Manh Nguyen and Konrad Voelkel clarified that the use of the $\op{Im} \beta$-$\mathbf{W}$-decomposition of $\mathbf{I}$-cohomology could improve structural statements and streamline proofs. 

\section{Recollection on Chow--Witt rings}
\label{sec:recall}

Throughout the article, we consider a perfect base field $F$ of characteristic $\op{char}(F)\neq 2$. All the relevant cohomology groups will be Nisnevich cohomology groups, i.e., Ext-groups between Nisnevich sheaves on the small site of a smooth scheme. 

Since this is a sequel to \cite{chow-witt}, most of the general facts concerning Chow--Witt rings relevant for the computation in the present paper can already be found in the discussion of \cite[Section 2]{chow-witt} (or, of course, in the original literature, cf. loc.cit. for references). The same applies to the general discussion of Chow--Witt rings of classifying spaces; all the statements relevant for the present paper can be found in \cite[Section 3]{chow-witt}. We freely use the definitions, facts and notation from \cite{chow-witt}. 

\subsection{Twisted coefficients and cohomology operations}
What has to be discussed in slightly more detail is the use of twisted coefficients in Chow--Witt groups resp. $\mathbf{I}^\bullet$-cohomology which was only mentioned in passing in \cite{chow-witt}. If $\mathscr{L}$ is a line bundle on a smooth scheme $X$, then there are twisted sheaves $\mathbf{I}^n(\mathscr{L})$ and $\mathbf{K}^{\op{MW}}_n(\mathscr{L})$. For the construction as well as a description of Gersten-type complexes computing $\op{H}^n_{\op{Nis}}(X,\mathbf{I}^n(\mathscr{L}))$ and $\op{H}^n_{\op{Nis}}(X,\mathbf{K}^{\op{MW}}_n(\mathscr{L}))\cong \widetilde{\op{CH}}^n(X,\mathscr{L})$, see \cite[Section 10]{fasel:memoir} and \cite[Section 2]{AsokFaselEuler}. In particular, Theorem 2.3.4 of \cite{AsokFaselEuler} provides an identification of the definition of twisted Chow--Witt groups in \cite{fasel:memoir} with the Nisnevich cohomology of the twisted Milnor--Witt K-theory sheaves. If $\mathscr{L}$ and $\mathscr{N}$ are two line bundles on $X$, then there are canonical isomorphisms
\[
\widetilde{\op{CH}}^\bullet(X,\mathscr{N})\cong \widetilde{\op{CH}}^\bullet(X,\mathscr{L}^2\otimes\mathscr{N}). 
\]

The twisted versions of Chow--Witt groups and $\mathbf{I}^n$-cohomology have functorial pullbacks, pushforwards and a localization sequence (where the cohomology of the closed subscheme appears with twist by the normal bundle of the inclusion). Formulations and references to the relevant literature can all be found in \cite[Section 2.1]{chow-witt}. 

We also need to discuss twisted analogues of the facts on cohomology operations discussed in \cite[Section 2.3]{chow-witt}. If $X$ is a smooth scheme and $\mathscr{L}$ is a line bundle on $X$, we can twist the exact sequence of fundamental ideals by $\mathscr{L}$ to get an exact sequence of strictly $\mathbb{A}^1$-invariant Nisnevich sheaves of abelian groups
\[
0\to \mathbf{I}^{n+1}(\mathscr{L})\to \mathbf{I}^n(\mathscr{L})\to \mathbf{K}^{\op{M}}_n/2\to 0. 
\]
This is analogous to the topological exact sequence $0\to \mathbb{Z}^{\op{t}}\to \mathbb{Z}^{\op{t}}\to \mathbb{Z}/2\mathbb{Z}\to 0$ for a local system $\mathbb{Z}^{\op{t}}$ with fiber $\mathbb{Z}$. Associated to the previous exact sequence of Nisnevich sheaves, we get a twisted version of the B\"ar sequence:
\[
\to \op{H}^n(X,\mathbf{I}^{n+1}(\mathscr{L})) \xrightarrow{\eta} \op{H}^n(X,\mathbf{I}^{n}(\mathscr{L})) \xrightarrow{\rho} \op{Ch}^n(X) \xrightarrow{\beta_{\mathscr{L}}} \op{H}^{n+1}(X,\mathbf{I}^{n+1}(\mathscr{L}))\to
\]
The connecting map $\beta_{\mathscr{L}}$ for this sequence is a Chow--Witt analogue of the Bockstein operation twisted by a local system in classical algebraic topology (for a discussion in the context of cohomology of ${\op{B}}\op{O}(n)$, cf. \cite{cadek}). 

\begin{remark}
A funny side remark on some of the differences between $\op{Ch}^n(X)$ and mod 2 singular cohomology. By the universal coefficient formula, mod 2 singular cohomology $\op{H}^n$ in general contains mod 2 reductions of integral classes in $\op{H}^n$ \emph{as well as} classes related to $2$-torsion classes in $\op{H}^{n-1}$. This is not true for $\op{Ch}^n(X)$, viewed as mod 2 reduction of $\op{CH}^n(X)$ -- by definition all classes in $\op{Ch}^n(X)$ are simply mod 2 reductions of $\op{CH}^n(X)$. However, the B\"ar sequence encodes a behaviour of $\op{Ch}^n(X)$ exactly analogous to mod 2 singular cohomology: there are some classes which lift to integral cohomology $\op{H}^n(X,\mathbf{I}^n)$, and some classes which don't (because they have nontrivial images under the Bockstein operation). 
\end{remark}

For a line bundle $\mathscr{L}$ on a smooth scheme $X$, the twisted Bockstein map
\[
\beta_{\mathscr{L}}\colon\op{Ch}^n(X)\to \op{H}^{n+1}(X,\mathbf{I}^{n+1}(\mathscr{L}))
\]
can be used to define twisted versions of integral Stiefel--Whitney classes analogous to those defined in \cite{fasel:ij}. The composition with the reduction morphism $\rho\colon\op{H}^{n+1}(X,\mathbf{I}^{n+1}(\mathscr{L}))\to \op{Ch}^{n+1}(X)$ has been identified in \cite[Theorem 3.4.1]{AsokFaselSecondary}. This is a twisted version of Totaro's identification \cite[Theorem 1.1]{totaro:witt} and a Chow--Witt version of \cite[Lemma 2]{cadek}. 

\begin{proposition}
\label{prop:twistedsq2}
Let $X$ be a smooth scheme and $\mathscr{L}$ be a line bundle over $X$. Denote by $\beta_{\mathscr{L}}\colon\op{Ch}^i(X)\to\op{H}^{i+1}(X,\mathbf{I}^{i+1}(\mathscr{L}))$ the twisted Bockstein map. Then for all $x\in\op{Ch}^i(X)$ we have
\[
\rho\circ\beta_{\mathscr{L}}(x)=\overline{\op{c}}_1(\mathscr{L})\cdot x+\op{Sq}^2(x)=:\op{Sq}^2_{\mathscr{L}}(x).
\]
\end{proposition}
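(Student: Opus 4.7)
The plan is to follow the strategy used in \cite{totaro:witt} for the untwisted identification $\rho\circ\beta=\op{Sq}^2$, keeping track of the twist throughout; this is essentially the content of the cited \cite[Theorem 3.4.1]{AsokFaselSecondary} and a Chow--Witt analogue of \cite[Lemma 2]{cadek}. Both sides of the claimed equality are natural transformations in the pair $(X,\mathscr{L})$: pullback commutes with the twisted B\"ar sequence, with the Steenrod square, and with the first Chern class. It is therefore enough to first check the case $\mathscr{L}=\mathscr{O}$, where $\overline{\op{c}}_1(\mathscr{L})=0$ and the twisted B\"ar sequence degenerates to the untwisted one, so the formula reduces to Totaro's identity. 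What remains is to show that the difference $\rho\circ\beta_{\mathscr{L}}-\rho\circ\beta_{\mathscr{O}}$ equals multiplication by $\overline{\op{c}}_1(\mathscr{L})$ on $\op{Ch}^i(X)$.

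To carry this out, I would work with the twisted Gersten complex of \cite[Section 10]{fasel:memoir} computing $\op{H}^\bullet(X,\mathbf{I}^\bullet(\mathscr{L}))$, whose codimension-$n$ term is $\bigoplus_{x\in X^{(n)}}\op{I}^{\bullet-n}(\kappa(x),\omega_x\otimes\mathscr{L}|_x)$ and whose differentials are twisted by transition functions of $\mathscr{L}$. Choosing local trivializations of $\mathscr{L}$ on a Zariski cover $\{U_i\}$, with transition cocycle $\{g_{ij}\}$, identifies the twisted and untwisted Gersten complexes as graded abelian groups; the difference of their differentials is governed by $\{g_{ij}\}$ taken modulo squares. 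Given a mod~$2$ cocycle representing $x\in\op{Ch}^i(X)$, one lifts it to the twisted Gersten complex via the chosen trivializations and computes its coboundary. The coboundary splits as the untwisted Bockstein of $x$ plus a cup product of $x$ with the \v Cech cocycle $\{g_{ij}\bmod (\kappa^\times)^2\}$, and this latter cocycle represents $\overline{\op{c}}_1(\mathscr{L})$ under the standard identification $\op{H}^1(X,\mathbf{K}^{\op{M}}_1/2)\cong\op{Pic}(X)/2$. Applying $\rho$ to the decomposition produces the sum $\op{Sq}^2(x)+\overline{\op{c}}_1(\mathscr{L})\cdot x$.

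The main technical obstacle is verifying that the cup product appearing in the \v Cech comparison really is multiplication by $\overline{\op{c}}_1(\mathscr{L})$ in the multiplicative structure used throughout the paper. This requires compatibility of the products on Gersten complexes with the pairings on the Milnor--Witt K-theory sheaves, together with the identification of the twisted sheaves from \cite[Theorem 2.3.4]{AsokFaselEuler}. This bookkeeping is carried out in detail in \cite[Theorem 3.4.1]{AsokFaselSecondary}, which I would cite as the ultimate source; the topological precursor in \cite[Lemma 2]{cadek} follows exactly the same cocycle-level pattern.
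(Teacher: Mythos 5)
Your proposal ends exactly where the paper does: the paper offers no independent proof of this proposition but simply quotes \cite[Theorem 3.4.1]{AsokFaselSecondary} as the twisted analogue of Totaro's identification and of \cite[Lemma 2]{cadek}, which is precisely the source you ultimately invoke. The Gersten/\v{C}ech cocycle sketch you add is a reasonable outline of how the underlying comparison goes, but it is neither carried out nor needed in the paper, so your argument is essentially the same as the paper's.
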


\subsection{Oriented intersection product and total Chow--Witt ring}
The oriented intersection product for the Chow--Witt ring has the form 
\[
\widetilde{\op{CH}}^i(X,\mathscr{L}_1)\times \widetilde{\op{CH}}^j(X,\mathscr{L}_2)\to \widetilde{\op{CH}}^{i+j}(X,\mathscr{L}_1\otimes\mathscr{L}_2);
\]
there is a similar product on twisted $\mathbf{I}^\bullet$-cohomology rings. With these products, the Chow--Witt ring is a $\langle-1\rangle$-graded commutative algebra over the Grothendieck--Witt ring $\op{GW}(F)$, and $\bigoplus_n\op{H}^n(X,\mathbf{I}^n)$ is a $(-1)$-graded commutative algebra over the Witt ring $\op{W}(F)$, cf. e.g. \cite[Section 2.2]{chow-witt}.

The total Chow--Witt ring of a smooth scheme $X$ is defined by 
\[
\bigoplus_{\mathscr{L}\in\op{Pic}(X)/2}\widetilde{\op{CH}}^\bullet(X,\mathscr{L}), 
\]
cf. e.g. \cite[Definition 6.10]{fasel:chowwitt}. Strictly speaking, a total Chow--Witt ring doesn't exist because identifications $\widetilde{\op{CH}}^\bullet(X,\mathscr{L})\cong \widetilde{\op{CH}}^\bullet(X,\mathscr{N})$ for isomorphic line bundles $\mathscr{L}$ and $\mathscr{N}$ depend on the choice of isomorphism between the line bundles. However, the technical inaccuracy of neglecting such choices of isomorphisms between different representatives of isomorphism classes of line bundles can be fixed by the methods in \cite{lax:similitude}. The same goes for the total $\mathbf{I}$-cohomology ring 
\[
\bigoplus_{\mathscr{L}\in\op{Pic}(X)/2, q\in\mathbb{N}}\op{H}^q(X,\mathbf{I}^q(\mathscr{L})).  
\]

Note that $\op{Pic}({\op{B}}\op{GL}_n)\cong \mathbb{Z}$ and $\op{Pic}(\op{Gr}(k,n))\cong\mathbb{Z}$; in particular, there are only two nontrivial dualities to consider for the total Chow--Witt rings of ${\op{B}}\op{GL}_n$ and $\op{Gr}(k,n)$. For ${\op{B}}\op{GL}_n$, the nontrivial element of $\op{Pic}({\op{B}}\op{GL}_n)/2$ is given by $\det\gamma_n^\vee$, the dual of the determinant of the universal rank $n$ bundle. Note that this corresponds precisely to the well-known topological fact that there are exactly two isomorphism classes of local systems on ${\op{B}}\op{O}(n)$, the trivial one and the one for the sign representation of $\pi_1({\op{B}}\op{O}(n))\cong\mathbb{Z}/2\mathbb{Z}$ on the coefficient ring $\mathbb{Z}$.

\subsection{The fundamental square}
After having discussed all the relevant preliminaries, there are now twisted analogues of the key diagram from \cite{chow-witt}, for any line bundle $\mathscr{L}$ on $X$:
\[
\xymatrix{
&\op{CH}^n(X)\ar[r]^= \ar[d] & \op{CH}^n(X)\ar[d]^2 \\
\op{H}^n(X,\mathbf{I}^{n+1}(\mathscr{L}))\ar[r]\ar[d]_=& \widetilde{\op{CH}}^n(X,\mathscr{L})\ar[r]\ar[d]&\op{CH}^n(X)\ar[r]^(.4){\partial_{\mathscr{L}}}\ar[d]^{\bmod 2}& \op{H}^{n+1}(X,\mathbf{I}^{n+1}(\mathscr{L}))\ar[d]^=\\
\op{H}^n(X,\mathbf{I}^{n+1}(\mathscr{L}))\ar[r]_\eta& \op{H}^{n}(X,\mathbf{I}^{n}(\mathscr{L}))\ar[r]_\rho\ar[d]& \op{Ch}^n(X)\ar[r]^(.4){\beta_{\mathscr{L}}}\ar[rd]_{\op{Sq}^2_{\mathscr{L}}}\ar[d]& \op{H}^{n+1}(X,\mathbf{I}^{n+1}(\mathscr{L}))\ar[d]^\rho \\
&0\ar[r]&0&\op{Ch}^{n+1}(X)
}
\]

As already mentioned in \cite{chow-witt}, there is a twisted analogue of \cite[Proposition 2.11]{chow-witt}, which states that for $F$ a perfect field of characteristic unequal to $2$ and a smooth scheme $X$ over $F$ the canonical map 
\[
c\colon\widetilde{\op{CH}}^\bullet(X,\mathscr{L})\to \op{H}^\bullet(X,\mathbf{I}^\bullet(\mathscr{L}))\times_{\op{Ch}^\bullet(X)} \ker\partial_{\mathscr{L}}
\] 
induced from the above key square is always surjective, and is injective if $\op{CH}^\bullet(X)$ has no non-trivial $2$-torsion. This way we can determine the additive structure of twisted Chow--Witt groups; if we consider the total Chow--Witt ring (i.e., the direct sum of twisted Chow--Witt groups over $\op{Pic}(X)/2$), the fiber square also describes the oriented intersection product. The result applies, in particular, to ${\op{B}}\op{GL}_n$ and the Grassmannians $\op{Gr}(k,n)$ (or more generally flag varieties $G/P$ for reductive groups) because these are known to have 2-torsion-free Chow groups. This implies that we only need to determine the individual terms of the fiber product to get a description of the Chow--Witt ring. 

\subsection{Decomposing $\mathbf{I}$-cohomology into $\mathbf{W}$-cohomology and the image of $\beta$}
One of the features which is new and hasn't been used in either \cite{chow-witt} or the previous version of the present paper is $\mathbf{W}$-cohomology. For a smooth $F$-scheme $X$, we can consider the restriction of the Nisnevich sheaf $\mathbf{W}$ of Witt groups to the small Nisnevich site of $X$, and then take its Nisnevich cohomology $\op{H}^\bullet(X,\mathbf{W})$. As before, if $\mathscr{L}$ is a line bundle on $X$, we can consider the twisted $\mathbf{W}$-cohomology groups $\op{H}^\bullet(X,\mathbf{W}(\mathscr{L}))$. The product structure on the Witt rings induces an intersection product
\[
\op{H}^i(X,\mathbf{W}(\mathscr{L}_1))\times\op{H}^j(X,\mathbf{W}(\mathscr{L}_2))\to \op{H}^{i+j}(X,\mathbf{W}(\mathscr{L}_1\otimes\mathscr{L}_2)), 
\]
and we can consider the total $\mathbf{W}$-cohomology ring $\bigoplus_{q,\mathscr{L}\in \op{Pic}(X)/2}\op{H}^q(X,\mathbf{W}(\mathscr{L}))$ (again using \cite{lax:similitude} to make sense of this). Similar to the $\mathbf{I}$-cohomology ring, the total $\mathbf{W}$-cohomology ring is a $(-1)$-graded commutative algebra over the Witt ring $\op{W}(F)$. 

There is a morphism $(\mathbf{I}^n)_{n\in\mathbb{Z}}\to (\mathbf{W})_{n\in\mathbb{Z}}$ which in degree $n$ is given by the natural inclusion $\mathbf{I}^n\hookrightarrow\mathbf{W}$, with the usual convention of $\mathbf{I}^n=\mathbf{W}$ for $n\leq 0$. This morphism induces a $\op{W}(F)$-algebra homomorphism
\[
\bigoplus_{q,\mathscr{L}}\op{H}^q(X,\mathbf{I}^q(\mathscr{L}))\to \bigoplus_{q,\mathscr{L}}\op{H}^q(X,\mathbf{W}(\mathscr{L}))
\]
from the total $\mathbf{I}$-cohomology ring to the total $\mathbf{W}$-cohomology ring.

The relation with $\mathbf{I}$-cohomology can be made more precise. The pieces
\[
\op{H}^{i-1}(X,\mathbf{K}^{\op{M}}_{n-1}/2)\to \op{H}^i(X,\mathbf{I}^n(\mathscr{L}))\to \op{H}^i(X,\mathbf{I}^{n-1}(\mathscr{L}))\to \op{H}^{i}(X,\mathbf{K}^{\op{M}}_{n-1}/2)
\]
of the B\"ar sequence provide isomorphisms $\op{H}^i(X,\mathbf{I}^n(\mathscr{L}))\to \op{H}^i(X,\mathbf{I}^{n-1}(\mathscr{L}))$ for $i>n$, because the outer terms vanish. This can be seen from the Gersten resolution for mod 2 Milnor K-theory together with the fact that $(\mathbf{K}^{\op{M}}_{n-1}/2)_{-c}=0$ for $c>n-1$. Moreover, from the Gersten resolution for $\mathbf{I}^n$, we also see that the natural morphisms $\op{H}^i(X,\mathbf{I}^n(\mathscr{L}))\to \op{H}^i(X,\mathbf{W}(\mathscr{L}))$ are isomorphisms for $i>n$. Now with this reinterpretation, we can consider the piece of the B\"ar sequence for the boundary case $i=n$:
\[
\op{Ch}^{n-1}(X)\cong \op{H}^{n-1}(X,\mathbf{K}^{\op{M}}_{n-1}/2)\xrightarrow{\beta_{\mathscr{L}}} \op{H}^n(X,\mathbf{I}^n(\mathscr{L}))\to \op{H}^n(X,\mathbf{W}(\mathscr{L}))\to 0.
\]
In particular, $\mathbf{I}$-cohomology is a combination of $\mathbf{W}$-cohomology with the image of the Bockstein morphism $\beta$. We get a stronger splitting result if the $\mathbf{W}$-cohomology is free:

\begin{lemma}
  \label{lem:wsplit}
  Let $X$ be a smooth scheme over a field $F$ of characteristic $\neq 2$, and let $\mathscr{L}$ be a line bundle on $X$. If $\op{H}^n(X,\mathbf{W}(\mathscr{L}))$ is free as a $\op{W}(F)$-module, then we have a splitting
  \[
  \op{H}^n(X,\mathbf{I}^n(\mathscr{L}))\cong \op{Im}\beta_{\mathscr{L}}\oplus\op{H}^n(X,\mathbf{W}(\mathscr{L})).
  \]
  In this case, the reduction morphism $\rho\colon \op{H}^n(X,\mathbf{W}(\mathscr{L}))\to \op{Ch}^n(X)$ is injective on the image of $\beta_{\mathscr{L}}$. 
\end{lemma}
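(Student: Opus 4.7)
The plan is to extract a short exact sequence from the boundary case $i=n$ of the B\"ar sequence, invoke projectivity of free $\op{W}(F)$-modules to get the splitting, and then combine the splitting with the B\"ar sequence one level higher to obtain the injectivity.

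For the splitting, the exact sequence displayed just above the lemma,
\[
\op{Ch}^{n-1}(X)\xrightarrow{\beta_{\mathscr{L}}}\op{H}^n(X,\mathbf{I}^n(\mathscr{L}))\xrightarrow{\pi}\op{H}^n(X,\mathbf{W}(\mathscr{L}))\to 0,
\]
is exactly the boundary case of the B\"ar sequence combined with the already established isomorphism $\op{H}^n(X,\mathbf{I}^{n-1}(\mathscr{L}))\cong\op{H}^n(X,\mathbf{W}(\mathscr{L}))$. Replacing $\op{Ch}^{n-1}(X)$ by the image of $\beta_{\mathscr{L}}$ produces a short exact sequence of $\op{W}(F)$-modules with cokernel $\op{H}^n(X,\mathbf{W}(\mathscr{L}))$, and $\op{W}(F)$-linearity of all the maps is inherited from the module structure on the $\mathbf{I}^\bullet$- and $\mathbf{W}$-cohomology rings. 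Under the hypothesis that the cokernel is free, it is projective as a $\op{W}(F)$-module, so a $\op{W}(F)$-linear section of $\pi$ exists and yields the asserted direct-sum decomposition.

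For the injectivity statement, suppose $x\in\op{Im}\beta_{\mathscr{L}}$ satisfies $\rho(x)=0$. Since $\ker\rho$ equals the image of $\eta\colon\op{H}^n(X,\mathbf{I}^{n+1}(\mathscr{L}))\to\op{H}^n(X,\mathbf{I}^n(\mathscr{L}))$, write $x=\eta z$. Applying $\pi$ and using $x\in\op{Im}\beta_{\mathscr{L}}=\ker\pi$ forces the image of $z$ under the natural morphism $\op{H}^n(X,\mathbf{I}^{n+1}(\mathscr{L}))\to\op{H}^n(X,\mathbf{W}(\mathscr{L}))$ to vanish. I would then combine the B\"ar sequence at level $n+1$ with the splitting just established at level $n$ to deduce $\eta z=0$, and hence $x=0$. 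The main technical hurdle is precisely this last deduction; the cleanest way to handle it should be to exploit that $\mathbf{W}$-cohomology is $\eta$-periodic, so that an element of $\op{Im}\eta$ whose image in $\mathbf{W}$-cohomology vanishes must already be killed by $\eta$ in $\op{H}^n(X,\mathbf{I}^n(\mathscr{L}))$, i.e.\ lie in $\op{Im}\beta_{\mathscr{L}}\cap\op{Im}\eta=0$.
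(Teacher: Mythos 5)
Your first half (the splitting) is fine and is exactly the paper's argument: the boundary piece of the B\"ar sequence gives a surjection $\op{H}^n(X,\mathbf{I}^n(\mathscr{L}))\twoheadrightarrow\op{H}^n(X,\mathbf{W}(\mathscr{L}))$ with kernel $\op{Im}\beta_{\mathscr{L}}$, and freeness (hence projectivity) of the target yields a $\op{W}(F)$-linear section.

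The injectivity half has a genuine gap, and it sits precisely at the step you flag as the ``main technical hurdle''. Having written $x=\eta z$ with $\pi(x)=0$, the statement you want --- that an element of $\op{Im}\eta$ with vanishing image in $\mathbf{W}$-cohomology is already zero --- is literally the statement that $\pi$ is injective on $\op{Im}\eta$, i.e.\ that $\op{Im}\eta\cap\ker\pi=\op{Im}\eta\cap\op{Im}\beta_{\mathscr{L}}=0$; so your proposed finish restates the goal rather than proving it. Nor does ``$\eta$-periodicity'' help at this bidegree: the isomorphisms $\op{H}^i(X,\mathbf{I}^m(\mathscr{L}))\to\op{H}^i(X,\mathbf{I}^{m-1}(\mathscr{L}))$ hold only for $i>m$, while here $i=n$ and $m=n+1,n$, where $\eta$ and the composite to $\mathbf{W}$-cohomology are in general neither injective nor surjective. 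Note also that your sketch of this step never actually invokes the splitting or the freeness hypothesis, although the conclusion is false without it. What is missing is the paper's key observation: the B\"ar sequence is a sequence of $\op{W}(F)$-modules, $\op{Ch}^{n-1}(X)$ is annihilated by $\op{I}(F)$, so $\op{Im}\beta_{\mathscr{L}}$ is $\op{I}(F)$-torsion, while $\op{Im}\eta\subseteq\op{I}(F)\cdot\op{H}^n(X,\mathbf{I}^n(\mathscr{L}))$; under the splitting $\op{H}^n(X,\mathbf{I}^n(\mathscr{L}))\cong\op{Im}\beta_{\mathscr{L}}\oplus\op{H}^n(X,\mathbf{W}(\mathscr{L}))$ with free second summand, one gets $\bigl(\op{I}(F)\cdot\op{H}^n(X,\mathbf{I}^n(\mathscr{L}))\bigr)\cap\op{Im}\beta_{\mathscr{L}}=0$, hence $\op{Im}\eta\cap\op{Im}\beta_{\mathscr{L}}=0$ and the injectivity follows. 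You need to supply this (or an equivalent) ingredient to close the argument.
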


\begin{proof}
  The B\"ar sequence is a long exact sequence of $\op{W}(F)$-modules. The first claim follows from the piece
  \[
  \op{Ch}^{n-1}(X)\xrightarrow{\beta_{\mathscr{L}}} \op{H}^n(X,\mathbf{I}^n(\mathscr{L}))\to \op{H}^n(X,\mathbf{W}(\mathscr{L}))\to 0.
  \]
  If the last group is free as $\op{W}(F)$-module, then the sequence splits as claimed. The second claim follows from the following exact piece of the B\"ar sequence
  \[
  \op{H}^n(X,\mathbf{I}^{n+1}(\mathscr{L}))\xrightarrow{\eta} \op{H}^n(X,\mathbf{I}^n(\mathscr{L}))\xrightarrow{\rho}\op{Ch}^n(X).
  \]
  The morphism $\eta\colon \op{H}^n(X,\mathbf{I}^{n+1}(\mathscr{L}))\to \op{H}^n(X,\mathbf{I}^n(\mathscr{L}))$ has image in the submodule $\op{I}(F)\cdot \op{H}^n(X,\mathbf{I}^n(\mathscr{L}))$ and the splitting implies
  \[
  \left(\op{I}(F)\cdot\op{H}^n(X,\mathbf{I}^n(\mathscr{L}))\right)\cap\op{Im}\beta_{\mathscr{L}}=0.
  \]
  In particular, $\op{Im}\beta_{\mathscr{L}}\cap \op{Im}\eta=0$, and this implies the injectivity claim.
\end{proof}

\begin{corollary}
  Let $X$ be a smooth scheme over a field $F$ of characteristic $\neq 2$, and let $\mathscr{L}$ be a line bundle on $X$. If the total $\mathbf{W}$-cohomology ring is free as a $\op{W}(F)$-module, then the image of the maps $\beta_{\mathscr{L}}$ for $\mathscr{L}\in\op{Pic}(X)/2$ coincides exactly with the $\op{W}(F)$-torsion in $\mathbf{W}$-cohomology. In particular, the image of the maps $\beta_{\mathscr{L}}$ for $\mathscr{L}\in\op{Pic}(X)/2$ is an ideal in  the total $\mathbf{W}$-cohomology ring.
\end{corollary}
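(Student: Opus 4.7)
The plan is to apply Lemma~\ref{lem:wsplit} bidegree by bidegree. The hypothesis that the total $\mathbf{W}$-cohomology ring is free as a $\op{W}(F)$-module is actually stronger than what is needed: it implies that each graded piece $\op{H}^n(X,\mathbf{W}(\mathscr{L}))$ is free over $\op{W}(F)$, so the lemma supplies a $\op{W}(F)$-module decomposition
\[
\op{H}^n(X,\mathbf{I}^n(\mathscr{L}))\cong \op{Im}\beta_{\mathscr{L}}\oplus \op{H}^n(X,\mathbf{W}(\mathscr{L})).
\]
Summing over $n$ and $\mathscr{L}\in\op{Pic}(X)/2$ globalizes this to a splitting of the total $\mathbf{I}$-cohomology in which the second summand is free, and the claim to be proved is that the first summand is exactly the $\op{W}(F)$-torsion of the whole.

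The forward inclusion is essentially built into the construction of $\beta_{\mathscr{L}}$: it factors through $\op{Ch}^\bullet(X)$, which is an $\mathbb{F}_2$-module, so the $\op{W}(F)$-action on $\op{Im}\beta_{\mathscr{L}}$ factors through $\op{W}(F)/\op{I}(F)\cong \mathbb{F}_2$. In particular every class in $\op{Im}\beta_{\mathscr{L}}$ is annihilated by $\op{I}(F)$, and hence by any non-zerodivisor contained therein (e.g.\ $2 = \langle 1,1\rangle\in\op{I}(F)$ in the non-degenerate situations that matter for the applications to $\op{B}\op{GL}_n$ and $\op{Gr}(k,n)$). For the reverse inclusion I would use the splitting: given a torsion element $x=x_\beta+x_W$ with $ax=0$ for a non-zerodivisor $a\in\op{W}(F)$, the relation $ax_W=0$ inside the free module $\op{H}^\bullet(X,\mathbf{W})$ forces $x_W=0$, so that $x\in\bigoplus_{\mathscr{L}}\op{Im}\beta_{\mathscr{L}}$.

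Once the identification with the torsion submodule is established, the ``ideal'' assertion is purely formal: in any $\op{W}(F)$-algebra $A$ the torsion submodule is automatically an ideal, because $at=0$ and $y\in A$ yield $a(ty)=(at)y=0$. Consequently the substance of the corollary is already packaged in Lemma~\ref{lem:wsplit}; the only conceptual step is to pin down ``$\op{W}(F)$-torsion'' in the sense of being annihilated by a non-zerodivisor. I do not foresee any serious obstacle: the argument is essentially the two short observations above, with the mild caveat that one should bear in mind the degenerate case $\op{W}(F)=\mathbb{F}_2$ (in which the statement becomes vacuous or trivial).
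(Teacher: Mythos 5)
Your overall route is the intended one: the paper states this corollary without proof, as an immediate consequence of Lemma~\ref{lem:wsplit} — the splitting exhibits the total $\mathbf{I}$-cohomology as (free $\mathbf{W}$-part)\,$\oplus$\,$\op{Im}\beta$, the image of $\beta_{\mathscr{L}}$ is annihilated by $\op{I}(F)$ because the B\"ar sequence is $\op{W}(F)$-linear (this is exactly Lemma~\ref{lem:baertorsion}), and the ideal assertion is formal. Your reverse inclusion and the ideal argument are fine, and your silent correction of the statement (the torsion and the ideal live in the total $\mathbf{I}$-cohomology ring, not in the $\mathbf{W}$-cohomology ring) is the right reading. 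A minor point: a direct summand of a free $\op{W}(F)$-module is a priori only projective, not free, but projectivity is all the splitting in Lemma~\ref{lem:wsplit} needs.

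The genuine gap is in the forward inclusion, i.e.\ in identifying ``$\op{W}(F)$-torsion'' with ``annihilated by a non-zerodivisor''. That reading requires $\op{I}(F)$ to contain a non-zerodivisor, and your proposed witness $2=\langle 1,1\rangle$ is a non-zerodivisor essentially only when $\op{W}(F)$ has no $2$-torsion (formally real pythagorean fields such as $\mathbb{R}$). Already for $F=\mathbb{Q}$ the Witt ring has nontrivial $2$-torsion, so $2$ is a zero divisor; and if $F$ is not formally real (finite fields, $p$-adic fields, $\mathbb{C}$), then $\op{I}(F)$ is the unique prime ideal of $\op{W}(F)$, hence consists of nilpotents and contains no non-zerodivisor whatsoever. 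In those cases your caveat ``vacuous or trivial'' is not accurate: the freeness hypothesis holds (every $\mathbb{F}_2$-module is free), $\op{Im}\beta$ is nonzero (e.g.\ $\op{e}_1=\beta_{\det\gamma_1^\vee}(1)$ on $\mathbb{P}^\infty$, Proposition~\ref{prop:casen1}), yet no nonzero class is annihilated by a non-zerodivisor, so with your definition the asserted equality is false rather than empty. The corollary must be read with ``$\op{W}(F)$-torsion'' meaning classes killed by the fundamental ideal $\op{I}(F)$, the sense in which $\op{Im}\beta$ is torsion by Lemma~\ref{lem:baertorsion}; then the forward inclusion is immediate with no appeal to non-zerodivisors, and the reverse inclusion runs through your splitting argument, using that a free module has no nonzero $\op{I}(F)$-annihilated elements provided $\op{Ann}_{\op{W}(F)}\op{I}(F)=0$ — a condition that is genuinely needed (it fails e.g.\ for $\op{W}(\mathbb{F}_3)\cong\mathbb{Z}/4\mathbb{Z}$), so the statement itself should be handled with corresponding care.
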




\begin{remark}
  The freeness of $\mathbf{W}$-cohomology in this lemma will play an important role in our computations. It is an algebraic replacement of the classical statement that ``all torsion in the cohomology of the Grassmannians is 2-torsion'', as formulated e.g. in \cite[Lemma 2.2]{brown}. Using the splitting in Lemma~\ref{lem:wsplit} is a different strategy than the cumbersome proofs in \cite{chow-witt} which were needed to establish that $\rho$ is injective on the image of $\beta$, cf. Remark 7.2 and the discussion before Proposition 8.6 in \cite{chow-witt}.
\end{remark}


There are two reasons why the decomposition of $\mathbf{I}$-cohomology as a direct sum of $\mathbf{W}$-cohomology and the image of $\beta$ is so effective as a computational tool. On the one hand, the image of $\beta$ is basically known in the relevant cases -- all it requires is knowledge of the Chow ring together with the action of $\op{Sq}^2$. On the other hand, computations in $\mathbf{W}$-cohomology are simpler than for $\mathbf{I}$-cohomology because the localization sequence takes the following simplified form: assume $X$ is a smooth scheme, $Z\subseteq X$ a smooth closed subscheme of pure codimension $c$ with open complement $U=X\setminus Z$, and $\mathscr{L}$ is a line bundle on $X$. Denote the inclusions by $i\colon Z\hookrightarrow X$ and $j\colon U\hookrightarrow X$, and denote by $\mathscr{N}$ the determinant of the normal bundle for $Z$ in $X$. Then we have a localization sequence for $\mathbf{W}$-cohomology
\[
\cdots\to \op{H}^i(U,\mathbf{W}(\mathscr{L}))\xrightarrow{\partial} \op{H}^{i-c+1}(Z,\mathbf{W}(\mathscr{L}\otimes \mathscr{N}_Z)) \xrightarrow{i_\ast}
\]
\[\xrightarrow{i_\ast}\op{H}^{i+1}(X,\mathbf{W}(\mathscr{L})) \xrightarrow{j^\ast} \op{H}^{i+1}(U,\mathbf{W}(\mathscr{L}))\to\cdots
\]
This has the distinct advantage that there are no index shifts in the coefficients (such as what happens for $\mathbf{I}$-cohomology) and we really get an honest long exact sequence (as opposed to only a piece of a long exact sequence containing the ``geometric bidegrees''). This way, computations of $\mathbf{W}$-cohomology can follow their classical topology counterparts much more closely than is possible for $\mathbf{I}$-cohomology.

\begin{remark}
  One explanation of the simplified form of the localization sequence is that the $\mathbf{W}$-cohomology ring $\bigoplus_n\op{H}^\bullet(X,\mathbf{W})$ considered above is part of the $\eta$-inverted Witt group theory considered e.g. in \cite{ananyevskiy}. Essentially, it is the quotient of the $\eta$-inverted Witt ring of $X$ modulo $\eta-1$. Some of the formulas for $\mathbf{W}$-cohomology of Grassmannians we develop in this paper already appear in loc.~cit.
\end{remark}

\section{Characteristic classes for vector bundles}
\label{sec:vbclasses}

The next two sections will provide a computation of the Chow--Witt ring of ${\op{B}}\op{GL}_n$. The global structure of the argument is similar to the computation of integral cohomology with local coefficients of ${\op{B}}\op{O}(n)$, cf.~\cite{cadek}. Some of the relevant adaptations to the Chow--Witt setting have already been made in \cite{chow-witt}. Additional, the decomposition of $\mathbf{I}$-cohomology into the image of $\beta$ and $\mathbf{W}$-cohomology will  significantly simplify the approach of \cite{chow-witt}, rendering the arguments even closer to their topological counterparts.

In this section, we begin by setting up the localization sequence and defining the relevant characteristic classes for vector bundles. We formulate the main structure results concerning the Chow--Witt and $\mathbf{I}^\bullet$-cohomology ring of ${\op{B}}\op{GL}_n$ and establish the basic relations between the characteristic classes. The inductive proof of the structure theorem will be done in the next section. 


\subsection{Setup of localization sequence}
We begin by setting up the localization sequence for the inductive computation of the cohomology of ${\op{B}}\op{GL}_n$, following the procedure for $\op{SL}_n$ in \cite[Section 5.1]{chow-witt}. 

Let $V$ be a finite-dimensional representation of $\op{GL}_n$ on which $\op{GL}_n$ acts freely outside a closed stable subset $Y$ of codimension $s$, and consider the quotient $X(V):=(V\setminus Y)/\op{GL}_n$. This computes $\widetilde{\op{CH}}^\bullet({\op{B}}\op{GL}_n,\mathscr{L})$ in degrees $\leq s-2$, and a finite-dimensional model for the universal $\op{GL}_n$-torsor is given by the projection $p\colon V\setminus Y\to X(V)$. The tautological $\op{GL}_n$-representation on $\mathbb{A}^n$ gives rise to a vector bundle $\gamma_V\colon E_n(V)\to X(V)$ associated to the $\op{GL}_n$-torsor $p\colon V\setminus Y\to X(V)$. 

Denote by $S_n(V)$ the complement of the zero-section of $\gamma_V\colon E(V)\to X(V)$. As in the case of $\op{SL}_n$, the complement $S_n(V)$ can be identified as an approximation of the classifying space ${\op{B}}\op{GL}_{n-1}$. Moreover, the quotient map $q\colon (V\setminus Y)/\op{GL}_{n-1}\to X(V)$ induces a morphism 
\[
\widetilde{\op{CH}}^\bullet(X(V),\mathscr{L})\xrightarrow{\gamma_V^\ast} \widetilde{\op{CH}}^\bullet(S_n(V),\gamma_V^\ast(\mathscr{L})) \cong \widetilde{\op{CH}}^\bullet((V\setminus Y)/\op{GL}_{n-1},q^\ast(\mathscr{L}))
\]
which models the stabilization map $\widetilde{\op{CH}}^\bullet({\op{B}}\op{GL}_n,\mathscr{L})\to \widetilde{\op{CH}}^\bullet({\op{B}}\op{GL}_{n-1},\iota^\ast\mathscr{L})$ for the standard inclusion $\iota\colon {\op{B}}\op{GL}_{n-1}\to{\op{B}}\op{GL}_n$. Consequently, we get the following localization sequence:

\begin{proposition}
\label{prop:locgln}
There is a long exact sequence of Chow--Witt groups of classifying spaces 
\begin{eqnarray*}
\cdots &\to & \widetilde{\op{CH}}^{q-n}({\op{B}}\op{GL}_n,\mathscr{L}\otimes\det\gamma_n) \to \widetilde{\op{CH}}^q({\op{B}}\op{GL}_n,\mathscr{L}) \to\\ &\to&\widetilde{\op{CH}}^q({\op{B}}\op{GL}_{n-1},\iota^\ast(\mathscr{L})) \to \op{H}^{q+1-n}({\op{B}}\op{GL}_n,\mathbf{K}^{\op{MW}}_{q-n}(\mathscr{L}\otimes\det\gamma_n)) \to \\&\to& \op{H}^{q+1}({\op{B}}\op{GL}_n,\mathbf{K}^{\op{MW}}_{q}(\mathscr{L}))\to \cdots
\end{eqnarray*}
The first map is the composition of the d\'evissage isomorphism with the forgetting of support, alternatively ``multiplication with the Euler class of the universal bundle $\gamma_n$''. The second map is the restriction along the stabilization inclusion $\iota\colon \op{GL}_{n-1}\to\op{GL}_n$. 

There are similar exact sequences for the other coefficients $\mathbf{I}^\bullet(\mathscr{L})$, $\mathbf{K}^{\op{M}}_\bullet$ and $\mathbf{W}$, and the change-of-coefficients maps induce commutative ladders of exact sequences. Notably, the localization sequence for $\mathbf{W}$-cohomology looks as follows:
\[
\cdots\to \op{H}^{q-n}({\op{B}}\op{GL}_n,\mathbf{W}(\mathscr{L}\otimes \det\gamma_n))\xrightarrow{\op{e}_n} \op{H}^q({\op{B}}\op{GL}_n,\mathbf{W}(\mathscr{L}))\xrightarrow{\iota^\ast}
\]
\[
\xrightarrow{\iota^\ast} \op{H}^q({\op{B}}\op{GL}_{n-1},\mathbf{W}(\iota^\ast\mathscr{L}))\xrightarrow{\partial} \op{H}^{q-n+1}({\op{B}}\op{GL}_n,\mathbf{W}(\mathscr{L}\otimes \det\gamma_n))\to\cdots
\]
\end{proposition}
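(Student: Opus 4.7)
The plan is to apply the localization sequence associated to the zero section of the vector bundle $\gamma_V \colon E_n(V) \to X(V)$, combined with homotopy invariance and the identification of $S_n(V)$ as an approximation of ${\op{B}}\op{GL}_{n-1}$. Since $X(V)$ models ${\op{B}}\op{GL}_n$ in degrees $\leq s-2$, it suffices to work at the finite level and then pass to a colimit over enlarging representations $V$. This argument follows closely the blueprint used for $\op{SL}_n$ in \cite[Section 5.1]{chow-witt}; the only additional bookkeeping concerns the nontrivial determinantal twist by $\det\gamma_n$ coming from the normal bundle.

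First I would invoke the standard twisted localization sequence for Milnor--Witt K-theory applied to the closed immersion $i\colon X(V) \hookrightarrow E_n(V)$ of codimension $n$, whose normal bundle is $\gamma_V$ itself:
\[
\cdots \to \op{H}^{p-n}(X(V), \mathbf{K}^{\op{MW}}_{q-n}(\mathscr{L}\otimes \det\gamma_V)) \xrightarrow{i_\ast} \op{H}^p(E_n(V), \mathbf{K}^{\op{MW}}_q(\mathscr{L})) \xrightarrow{j^\ast} \op{H}^p(S_n(V), \mathbf{K}^{\op{MW}}_q(\mathscr{L})) \to \cdots
\]
where the twist by $\det \gamma_V$ comes from the d\'evissage isomorphism together with its normal-bundle correction (as recorded in \cite[Section 2.1]{chow-witt} and the references therein). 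Homotopy invariance for vector bundles collapses $\op{H}^p(E_n(V),-)$ to $\op{H}^p(X(V),-)$; taking $p=q$ (and $p=q-n+1$ on the target of the boundary $\partial$) yields exactly the six-term piece appearing in the statement.

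Next I would identify $S_n(V)$ with an approximation of ${\op{B}}\op{GL}_{n-1}$. Since $\op{GL}_n$ acts transitively on $\mathbb{A}^n\setminus\{0\}$ with stabilizer a parabolic $P \simeq \mathbb{G}_a^{n-1} \rtimes \op{GL}_{n-1}$, the space $S_n(V) = (V\setminus Y) \times_{\op{GL}_n} (\mathbb{A}^n\setminus\{0\})$ is an affine-space bundle over $(V\setminus Y)/\op{GL}_{n-1}$ and hence $\mathbb{A}^1$-weakly equivalent to it. Under this identification the pullback of $\mathscr{L}$ corresponds to $\iota^\ast\mathscr{L}$, so the restriction in the localization sequence becomes the stabilization map. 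Pushforward along the zero section is, via the Thom isomorphism, multiplication by the Euler class of $\gamma_V$, which produces the first map in the stated form.

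The parallel sequences for $\mathbf{I}^\bullet(\mathscr{L})$, $\mathbf{K}^{\op{M}}_\bullet/2$ and $\mathbf{W}(\mathscr{L})$-coefficients follow from the same geometric argument applied to the respective localization and homotopy invariance results, and the change-of-coefficients ladders are induced by the morphisms of Nisnevich sheaves. For the $\mathbf{W}$-variant, the absence of a cohomological index on the coefficient sheaf means no numerical shift appears between source and target of the boundary map, and one obtains the honest long exact sequence displayed in the statement (as already exploited in the discussion preceding the proposition). The main technical point requiring care --- and the step I expect to demand the most attention --- is the precise identification of the normal-bundle twist as $\det\gamma_n$ rather than its inverse, together with the simultaneous compatibility of this identification across all four coefficient theories; this is handled exactly as in \cite{fasel:memoir} and \cite{AsokFaselEuler}.
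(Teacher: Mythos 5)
Your proposal is correct and follows essentially the same route as the paper, which simply runs the $\op{SL}_n$ argument of \cite[Proposition 5.1]{chow-witt} in the setting already set up before the proposition: twisted localization for the zero section of $\gamma_V$, homotopy invariance, identification of $S_n(V)$ with an approximation of ${\op{B}}\op{GL}_{n-1}$ via the stabilizer of a vector, and passage over finite-dimensional approximations, with the determinantal twist and the shift-free $\mathbf{W}$-sequence handled exactly as you describe.
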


The proof is the same line of argument as for the case $\op{SL}_n$ in \cite[Proposition 5.1]{chow-witt}. 
\begin{remark}
Note also that for $\mathscr{L}=\det\gamma_n$, with $\gamma_n$ the universal rank $n$ bundle on ${\op{B}}\op{GL}_n$, we have $\iota^\ast\mathscr{L}\cong \det\gamma_{n-1}$. The multiplication with the Euler class changes the dualities. 
\end{remark}

\subsection{Euler class}

Recall from \cite[Definition 5.9]{chow-witt} how the Chow--Witt-theoretic Euler class of \cite{AsokFaselEuler} gives rise to an Euler class in $\widetilde{\op{CH}}^\bullet({\op{B}}\op{GL}_n,\det \gamma_n^\vee)$. For a smooth scheme $X$, the Chow--Witt-theoretic Euler class of a vector bundle $p\colon \mathscr{E}\to X$ of rank $n$ is defined via the formula
\[
\op{e}_n(p\colon \mathscr{E}\to X):=(p^\ast)^{-1}{s_0}_\ast(1)\in\widetilde{\op{CH}}^n(X,\det(p)^\vee),
\]
where $s_0\colon X\to \mathscr{E}$ is the zero section. 
Using smooth finite-dimensional approximations to the classifying space ${\op{B}}\op{GL}_n$ provides a well-defined Euler class 
\[
\op{e}_n\in \widetilde{\op{CH}}^n({\op{B}}\op{GL}_n,\det(\gamma_n)^\vee).
\]
In the localization sequence of Proposition~\ref{prop:locgln}, the Euler class corresponds under the d\'evissage isomorphism to the Thom class for the universal rank $n$ vector bundle $\gamma_n$ on ${\op{B}}\op{GL}_n$. This 
justifies calling the composition
\[
\widetilde{\op{CH}}^{q-n}({\op{B}}\op{GL}_n,\mathscr{L}\otimes\det\gamma_n) \cong \widetilde{\op{CH}}^q_{{\op{B}}\op{GL}_n}(E_n,\mathscr{L}) \to  \widetilde{\op{CH}}^q(E_n,\mathscr{L}) \cong \widetilde{\op{CH}}^q({\op{B}}\op{GL}_n,\mathscr{L})
\]
``multiplication with the Euler class''. There are corresponding notions of Euler classes in $\mathbf{I}^\bullet$-cohomology, $\mathbf{W}$-cohomology, as well as Chow theory; these are compatible with the change of coefficients. The Euler classes are compatible with pullbacks of morphisms between smooth schemes, cf.~\cite[Proposition 3.1.1]{AsokFaselEuler}.

\subsection{Chern classes}

A direct consequence of the above localization sequence for Chow theory is the computation of the Chow-ring (with integral and mod 2 coefficients) of the classifying space ${\op{B}}\op{GL}_n$. The formulas are the standard ones found in any intersection theory handbook, cf. also~\cite[Proposition 5.2]{chow-witt}. As in loc.cit., the Chern classes are uniquely determined by their compatibility with stabilization and the identification of the top Chern class with the Euler class of the universal bundle. 

\begin{proposition}
\label{prop:chern}
There are unique classes $\op{c}_i(\op{GL}_n)\in \op{CH}^i({\op{B}}\op{GL}_n)$ for $1\leq i\leq n$, such that the natural stabilization morphism $\iota\colon {\op{B}}\op{GL}_{n-1}\to{\op{B}}\op{GL}_n$ satisfies $\iota^\ast \op{c}_i(\op{GL}_n)=\op{c}_i(\op{GL}_{n-1})$ for $i<n$ and $\op{c}_n(\op{GL}_n)=\op{e}_n(\op{GL}_n)$. In particular, the Chow--Witt-theoretic Euler class reduces to the top Chern class in the Chow theory. 
There is a natural isomorphism
\[
\op{CH}^\bullet({\op{B}}\op{GL}_n)\cong\mathbb{Z}[\op{c}_1,\op{c}_2,\dots,\op{c}_n]. 
\]
The restriction along the Whitney sum ${\op{B}}\op{GL}_m\times{\op{B}}\op{GL}_{n-m}\to{\op{B}}\op{GL}_n$ maps the Chern classes as follows:
\[
\op{c}_i\mapsto \sum_{j=i+m-n}^m\op{c}_j\boxtimes\op{c}_{i-j}.
\]
\end{proposition}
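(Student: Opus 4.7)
The plan is to argue by induction on $n$ using the localization sequence from \prettyref{prop:locgln} specialized to Chow theory. In the Chow-theoretic case, the sequence takes the familiar form
\[
\op{CH}^{q-n}({\op{B}}\op{GL}_n) \xrightarrow{\cdot\op{e}_n} \op{CH}^q({\op{B}}\op{GL}_n) \xrightarrow{\iota^\ast} \op{CH}^q({\op{B}}\op{GL}_{n-1}) \to 0,
\]
since the outgoing boundary lands in a Milnor K-cohomology group outside the geometric bidegree and hence may be ignored for Chow-group considerations. The base case $n=1$ identifies ${\op{B}}\op{GL}_1$ with $\mathbb{P}^\infty$, giving $\op{CH}^\bullet({\op{B}}\op{GL}_1)=\mathbb{Z}[\op{c}_1]$.

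For the inductive step, I would define $\op{c}_n\in\op{CH}^n({\op{B}}\op{GL}_n)$ as the image of $\op{e}_n$ under the canonical reduction $\widetilde{\op{CH}}^n({\op{B}}\op{GL}_n,\det\gamma_n^\vee)\to\op{CH}^n({\op{B}}\op{GL}_n)$, and for each $i<n$ pick a class $\op{c}_i\in\op{CH}^i({\op{B}}\op{GL}_n)$ with $\iota^\ast\op{c}_i=\op{c}_i(\op{GL}_{n-1})$, which exists by surjectivity of $\iota^\ast$. Uniqueness of these lifts in degrees $i<n$ follows because the kernel of $\iota^\ast$ equals the image of multiplication by $\op{e}_n$, which sits in degrees $\geq n$.

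For the polynomial structure, surjectivity of $\mathbb{Z}[\op{c}_1,\dots,\op{c}_n]\to\op{CH}^\bullet({\op{B}}\op{GL}_n)$ follows by induction on $q$ from the exact sequence together with the inductive description of $\op{CH}^\bullet({\op{B}}\op{GL}_{n-1})$: anything killed by $\iota^\ast$ is $\op{e}_n$ times a class of lower degree, which is a polynomial by induction. Injectivity is the main obstacle since the exact sequence alone does not rule out relations, and the cleanest way to settle it is via the splitting principle. Pulling back along the inclusion of a maximal torus ${\op{B}}\op{T}_n\to{\op{B}}\op{GL}_n$, one proves by a secondary induction (using compatibility of the characteristic classes just constructed with bundle decompositions, together with the fact that the tautological bundle splits as a sum of line bundles over ${\op{B}}\op{T}_n$) that $\op{c}_i$ maps to the elementary symmetric polynomial $e_i(t_1,\dots,t_n)$ in $\op{CH}^\bullet({\op{B}}\op{T}_n)=\mathbb{Z}[t_1,\dots,t_n]$. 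Algebraic independence of the elementary symmetric polynomials then gives injectivity and, as a byproduct, confirms that $\op{e}_n$ is a non-zero-divisor.

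For the Whitney sum formula, restricting further along $\op{T}_m\times\op{T}_{n-m}\hookrightarrow\op{T}_n$, the $i$-th elementary symmetric polynomial splits as $\sum_j e_j(t_1,\dots,t_m)\,e_{i-j}(t_{m+1},\dots,t_n)$, with the index range $\max(0,i+m-n)\leq j\leq\min(i,m)$ reflecting the combinatorial constraint on how many of the $i$ chosen indices can lie in the block of the first $m$ variables. Injectivity of the restriction $\op{CH}^\bullet({\op{B}}\op{GL}_m\times{\op{B}}\op{GL}_{n-m})\to\op{CH}^\bullet({\op{B}}\op{T}_m\times{\op{B}}\op{T}_{n-m})$ (again by algebraic independence of symmetric functions block-wise) allows this toric identity to be lifted to the claimed formula. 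The only genuine obstacle is the injectivity step, for which the maximal torus comparison provides the required auxiliary input beyond the localization sequence itself.
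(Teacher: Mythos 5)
Your proposal is correct and follows essentially the same route as the paper, which simply invokes the localization sequence of Proposition~\ref{prop:locgln} in its Chow/Milnor K-theory form and then refers to standard Chern-class theory (and the $\op{SL}_n$ analogue \cite[Proposition 5.2]{chow-witt}) for the ring structure and the Whitney formula: your induction on $n$ and on the degree $q$, the uniqueness-by-degree argument using $\ker\iota^\ast=\op{e}_n\cdot\op{CH}^{\bullet-n}$, and the surjectivity argument are exactly what that citation amounts to, and your splitting-principle detour supplies the injectivity and Whitney statements that the paper outsources to the handbooks. One point in your sketch needs care: you compute the restriction of $\op{c}_i$ to ${\op{B}}\op{T}_n$ ``using compatibility of the characteristic classes just constructed with bundle decompositions'' --- taken literally this is the Whitney sum formula, which is one of the things being proved, so as phrased the step is circular. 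It can be repaired without Whitney: for $i<n$, the restriction of $\op{c}_i$ to $\op{CH}^\bullet({\op{B}}\op{T}_n)$ is Weyl-invariant and restricts over ${\op{B}}\op{T}_{n-1}$ to the class for $\op{GL}_{n-1}$, inductively $e_i(t_1,\dots,t_{n-1})$; since the kernel of setting $t_n=0$ on symmetric polynomials is the ideal generated by $e_n(t_1,\dots,t_n)$, which lives in degrees $\geq n$, this forces $\op{c}_i\mapsto e_i(t_1,\dots,t_n)$. For $i=n$ one only needs multiplicativity of the Chow-theoretic Euler class for direct sums of line bundles, which follows directly from the zero-section definition. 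Alternatively, the uniqueness clause you have already established identifies your classes with the classical (Grothendieck) Chern classes, after which the torus computation and the Whitney formula are precisely the standard facts the paper cites.
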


\begin{remark}
\label{rem:bglnorient}
From the above computations of the Chow ring of ${\op{B}}\op{GL}_n$ we also see the standard fact that $\op{Pic}({\op{B}}\op{GL}_n)\cong\mathbb{Z}$. Note that for any smooth scheme $X$ and any two line bundles $\mathscr{L},\mathscr{N}$ over $X$ such that the class of $\mathscr{L}$ in $\op{Pic}(X)$ is divisible by $2$, we have 
\[
\widetilde{\op{CH}}^\bullet(X,\mathscr{L}\otimes\mathscr{N})\cong \widetilde{\op{CH}}^\bullet(X,\mathscr{N}).
\]
In particular, there are only two relevant dualities to consider for ${\op{B}}\op{GL}_n$: the trivial duality corresponding to the trivial line bundle on ${\op{B}}\op{GL}_n$, and the nontrivial duality corresponding to the determinant of the universal bundle. This closely resembles the classical situation where $\pi_1{\op{B}}\op{O}(n)\cong\mathbb{Z}/2\mathbb{Z}$ and so there are only two isomorphism classes of local systems on ${\op{B}}\op{O}(n)$. 
\end{remark}

\subsection{Pontryagin classes}

Recall from \cite[Definition 5.6]{chow-witt} that the Pontryagin classes of vector bundles are defined as the images of $\op{p}_i\in \widetilde{\op{CH}}^\bullet({\op{B}}\op{Sp}_{2n})$ of \cite[Theorem 4.10]{chow-witt} under the homomorphism
\[
\widetilde{\op{CH}}^\bullet({\op{B}}\op{Sp}_{2n})\to \widetilde{\op{CH}}^\bullet({\op{B}}\op{GL}_n)
\]
which is induced from the symplectification morphism (aka standard hyperbolic functor) ${\op{B}}\op{GL}_n\to {\op{B}}\op{Sp}_{2n}$. Note that this means that the Pontryagin classes of vector bundles always live in the Chow--Witt ring with trivial duality (because they come from the symplectic group). As for the special linear groups, cf.~\cite[Proposition 5.8]{chow-witt}, the Pontryagin classes are compatible with stabilization in the sense that 
\[
\iota^\ast(\op{p}_i(\op{GL}_n))=\op{p}_i(\op{GL}_{n-1})
\]
where $i<n$ and $\iota^\ast\colon \widetilde{\op{CH}}^\bullet({\op{B}}\op{GL}_n)\to \widetilde{\op{CH}}^\bullet({\op{B}}\op{GL}_{n-1})$ is induced from the natural stabilization map $\op{GL}_{n-1}\to\op{GL}_n$. There are corresponding definitions of Pontryagin classes for $\mathbf{I}^\bullet$-cohomology and $\mathbf{W}$-cohomology, compatible with the natural change-of-coefficient maps
\[
\widetilde{\op{CH}}^q(X) \to \op{H}^q(X,\mathbf{I}^q)\to \op{H}^q(X,\mathbf{W}).
\]

\subsection{Stiefel--Whitney classes and their (twisted) Bocksteins}

The localization sequence of Proposition~\ref{prop:locgln} immediately implies a theory of Stiefel--Whitney classes which are determined by the compatibility with stabilization and the identification of the top Stiefel--Whitney class with the Euler class of the respective universal bundle, cf. \cite[Proposition 5.4]{chow-witt}. 

\begin{proposition}
\label{prop:sw}
There are unique classes $\overline{\op{c}}_i(\op{GL}_n)\in \op{Ch}^i({\op{B}}\op{GL}_n)$ for $1\leq i\leq n$, such that the natural stabilization morphism $\iota\colon {\op{B}}\op{GL}_{n-1}\to{\op{B}}\op{GL}_n$ satisfies $\iota^\ast \overline{\op{c}}_i(\op{GL}_n)=\overline{\op{c}}_i(\op{GL}_{n-1})$ for $i<n$ and $\overline{\op{c}}_n(\op{GL}_n)=\op{e}_n(\op{GL}_n)$. These agree with the Stiefel--Whitney classes in \cite[Definition 4.2]{fasel:ij}. There is a natural isomorphism
\[
\op{Ch}^\bullet({\op{B}}\op{GL}_n)\cong \mathbb{Z}/2\mathbb{Z}[\overline{\op{c}}_1,\dots,\overline{\op{c}}_n].
\]
\end{proposition}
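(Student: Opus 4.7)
The plan is to follow exactly the inductive strategy used in Proposition~\ref{prop:chern} for Chern classes, carried out in mod 2 Chow theory rather than integral Chow theory. The base case $n=0$ is trivial since ${\op{B}}\op{GL}_0$ is a point, for which $\op{Ch}^\bullet = \mathbb{Z}/2\mathbb{Z}$ is concentrated in degree $0$ and contains no classes to name.

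For the inductive step, I would use the mod 2 reduction of the localization sequence from Proposition~\ref{prop:locgln}. With $\mathbf{K}^{\op{M}}_\bullet/2$-coefficients, the Nisnevich cohomology of classifying spaces of linear algebraic groups is concentrated in the single bidegree where cohomological and geometric degree agree, so the localization sequence degenerates to a short exact sequence
\[
0 \to \op{Ch}^{q-n}({\op{B}}\op{GL}_n) \xrightarrow{\;\cdot\,\overline{\op{e}}_n\;} \op{Ch}^q({\op{B}}\op{GL}_n) \xrightarrow{\iota^\ast} \op{Ch}^q({\op{B}}\op{GL}_{n-1}) \to 0.
\]
Injectivity on the left reflects the fact that $\overline{\op{e}}_n$ is a non-zero-divisor, which can be deduced from the corresponding statement for the integral Euler/top Chern class using that $\op{CH}^\bullet({\op{B}}\op{GL}_n)$ is torsion-free. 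Given this sequence, the inductive step is essentially formal: the inductively known classes $\overline{\op{c}}_i(\op{GL}_{n-1})$ for $i<n$ lift uniquely to classes $\overline{\op{c}}_i(\op{GL}_n)\in\op{Ch}^i({\op{B}}\op{GL}_n)$, with uniqueness because $\ker\iota^\ast = (\overline{\op{e}}_n)$ is concentrated in degrees $\geq n > i$. We then set $\overline{\op{c}}_n(\op{GL}_n) := \overline{\op{e}}_n(\op{GL}_n)$. Combining existence and uniqueness with the inductive polynomial ring presentation yields the polynomial ring description of $\op{Ch}^\bullet({\op{B}}\op{GL}_n)$.

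A shorter alternative exploits the fact that $\op{CH}^\bullet({\op{B}}\op{GL}_n)$ is torsion-free by Proposition~\ref{prop:chern}, so the universal coefficient comparison gives $\op{Ch}^\bullet({\op{B}}\op{GL}_n)\cong\op{CH}^\bullet({\op{B}}\op{GL}_n)\otimes\mathbb{Z}/2$ directly; the classes $\overline{\op{c}}_i(\op{GL}_n)$ are then simply the mod 2 reductions of the integral Chern classes. Agreement with Fasel's Stiefel--Whitney classes from \cite[Definition 4.2]{fasel:ij} then follows from the uniqueness clause: both families of classes are characterized by stability under $\iota^\ast$ together with the top class being the mod 2 reduction of the Euler class, so they must coincide. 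I expect the only genuinely delicate bookkeeping point to be verifying that the Euler class in $\op{Ch}^n$ really does coincide with the mod 2 reduction of the integral Euler class, which follows from the naturality of Euler classes under the change-of-coefficients maps recalled in Section~\ref{sec:vbclasses}.
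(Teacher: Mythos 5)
Your proposal is essentially the paper's own (implicit) argument: the mod 2 localization sequence of Proposition~\ref{prop:locgln} pins the classes down by stabilization-compatibility plus the identification of the top class with the Euler class, exactly as for the Chern classes in Proposition~\ref{prop:chern}, and your shortcut via $\op{Ch}^\bullet=\op{CH}^\bullet/2$ of the torsion-free polynomial ring is also fine since $\op{Ch}$ is by definition the mod 2 reduction. The only blemish is the claim that cohomology with $\mathbf{K}^{\op{M}}_\bullet/2$-coefficients is concentrated on the diagonal: only the vanishing \emph{above} the diagonal (cohomological degree exceeding the weight) is true and is what right-exactness needs, while the below-diagonal groups need not vanish; this does not affect your argument because you establish injectivity of multiplication by $\overline{\op{e}}_n$ separately from torsion-freeness of $\op{CH}^\bullet({\op{B}}\op{GL}_n)$.
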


Again, this is a very classical formula. We include it just for the following discussion of the (twisted) Bockstein classes and the action of the respective (twisted) Steenrod squares on $\op{Ch}^\bullet({\op{B}}\op{GL}_n)$. 

Recall from Section~\ref{sec:recall} that for a scheme $X$ and a line bundle $\mathscr{L}$, we have a Bockstein map $\op{Ch}^n(X)\to\op{H}^{n+1}(X,\mathbf{I}^{n+1}(\mathscr{L}))$. For the specific case of ${\op{B}}\op{GL}_n$, there are two relevant line bundles to consider: $\mathscr{O}$ and $\det\gamma_n^\vee$, cf.~Remark~\ref{rem:bglnorient}. This leads to two types of Bockstein classes for vector bundles:

\begin{definition}
\label{def:bockstein}
For a set $J=\{j_1,\dots,j_l\}$ of integers $0<j_1<\cdots<j_l\leq[(n-1)/2]$, there are classes
\begin{eqnarray*}
\beta_J&:=&\beta_{\mathscr{O}}(\overline{\op{c}}_{2j_1} \overline{\op{c}}_{2j_2}\cdots\overline{\op{c}}_{2j_l})\in \op{H}^{d+1}({\op{B}}\op{GL}_n,\mathbf{I}^{d+1})\\
\tau_J&:=&\beta_{\det\gamma_n^\vee}(\overline{\op{c}}_{2j_1} \overline{\op{c}}_{2j_2}\cdots\overline{\op{c}}_{2j_l})\in \op{H}^{d+1}({\op{B}}\op{GL}_n,\mathbf{I}^{d+1}(\det\gamma_n^\vee))
\end{eqnarray*}
where $d=\sum_{a=1}^l 2j_a$. 
\end{definition}

\begin{remark}
The Bockstein class $\beta(\emptyset)$ is trivial, cf.~\cite[Remark 5.12]{chow-witt}. However, the class $\tau(\emptyset)$ is nontrivial; more precisely, 
\[
\rho(\tau(\emptyset))=\op{Sq}^2_{\det\gamma_n^\vee}(1)=\overline{\op{c}}_1.
\]
\end{remark}

\begin{lemma}
\label{lem:baertorsion}
For a set $J=\{j_1,\dots,j_l\}$ of integers $0<j_1<\cdots<j_l\leq[(n-1)/2]$, we have  
\[
\op{I}(F)\beta_{\mathscr{O}}(\overline{\op{c}}_{2j_1}\cdots \overline{\op{c}}_{2j_l})=0 \quad \textrm{ and } \quad \op{I}(F)\beta_{\det\gamma_n^\vee}(\overline{\op{c}}_{2j_1} \cdots \overline{\op{c}}_{2j_l})=0
\]
in $\op{H}^\bullet({\op{B}}\op{GL}_n,\mathbf{I}^\bullet)$ and  $\op{H}^\bullet({\op{B}}\op{GL}_n,\mathbf{I}^\bullet(\det\gamma_n^\vee))$, respectively.
\end{lemma}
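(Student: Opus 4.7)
\textbf{Proof plan for Lemma~\ref{lem:baertorsion}.} The plan is to observe that the statement is essentially formal and follows from the fact that the twisted Bockstein $\beta_{\mathscr{L}}$ is $\op{W}(F)$-linear together with the fact that $\op{I}(F)$ acts trivially on mod $2$ Chow groups. No specific property of the Stiefel--Whitney monomials $\overline{\op{c}}_{2j_1}\cdots\overline{\op{c}}_{2j_l}$ is needed; the statement would hold for any class in the image of $\beta_{\mathscr{L}}$.

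First, I would recall that the short exact sequence of Nisnevich sheaves
\[
0\to \mathbf{I}^{n+1}(\mathscr{L})\to \mathbf{I}^n(\mathscr{L})\to \mathbf{K}^{\op{M}}_n/2\to 0
\]
underlying the twisted B\"ar sequence is a short exact sequence of sheaves of $\mathbf{W}$-modules: the subsheaf $\mathbf{I}^{n+1}(\mathscr{L})$ and the quotient $\mathbf{K}^{\op{M}}_n/2$ both carry canonical $\mathbf{W}$-actions (on the latter, $\mathbf{W}$ acts via the rank-mod-$2$ augmentation $\mathbf{W}\to\mathbf{W}/\mathbf{I}\cong\mathbb{Z}/2$), and the two morphisms are $\mathbf{W}$-linear. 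Consequently the associated long exact sequence, and in particular the connecting map $\beta_{\mathscr{L}}$, is a long exact sequence of $\op{H}^0(\op{Spec} F,\mathbf{W})=\op{W}(F)$-modules.

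Second, I would observe that $\op{Ch}^d(X)=\op{H}^d(X,\mathbf{K}^{\op{M}}_d/2)$ inherits its $\op{W}(F)$-module structure from the augmentation, so that $\op{I}(F)$ annihilates every class in $\op{Ch}^d(X)$. Combining these two observations, for any $x\in\op{Ch}^d(X)$ and any $\alpha\in\op{I}(F)$ one has
\[
\alpha\cdot \beta_{\mathscr{L}}(x)=\beta_{\mathscr{L}}(\alpha\cdot x)=\beta_{\mathscr{L}}(0)=0,
\]
and specializing to $x=\overline{\op{c}}_{2j_1}\cdots\overline{\op{c}}_{2j_l}$ and $\mathscr{L}\in\{\mathscr{O},\det\gamma_n^\vee\}$ yields both vanishing statements.

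The only point that requires minor care, and that I consider the main (and only) obstacle, is verifying the $\mathbf{W}$-module structure on the middle term and the $\mathbf{W}$-linearity of the residue/augmentation map $\mathbf{I}^n(\mathscr{L})\to \mathbf{K}^{\op{M}}_n/2$ in the twisted setting. This is where one needs to cite the constructions of the twisted sheaves from \cite[Section 10]{fasel:memoir} and \cite[Section 2]{AsokFaselEuler} to make sure that the pairing $\mathbf{I}^a\otimes\mathbf{I}^b(\mathscr{L})\to \mathbf{I}^{a+b}(\mathscr{L})$ is compatible with the B\"ar sequence as stated; once this is granted, the argument is the formal three-line computation above.
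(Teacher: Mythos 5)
Your proposal is correct and is essentially the paper's own argument: the paper proves the lemma by noting it is formal from the $\op{W}(F)$-linearity of the maps in the twisted B\"ar sequence (citing the analogous Lemma~7.3 of \cite{chow-witt}), which is exactly your computation $\alpha\cdot\beta_{\mathscr{L}}(x)=\beta_{\mathscr{L}}(\alpha\cdot x)=0$ using that $\op{I}(F)$ kills $\op{Ch}^\bullet$. Your extra care about the $\mathbf{W}$-module structure on the twisted sheaves is a reasonable point to check but does not change the route.
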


\begin{proof}
As in \cite[Lemma 7.3]{chow-witt}, this is formal from the $\op{W}(F)$-linearity of the maps in the exact B\"ar sequence.
\end{proof}

\begin{proposition}
\label{prop:eulerrel}
With the notation from Definition~\ref{def:bockstein}, if $n=2k+1$, we have 
\[
\op{e}_n=\beta_{\det\gamma_n^\vee}(\overline{\op{c}}_{n-1})=\tau_{\{k\}}.
\]
\end{proposition}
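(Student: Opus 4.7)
The second equality $\beta_{\det\gamma_n^\vee}(\overline{\op{c}}_{n-1})=\tau_{\{k\}}$ is immediate from Definition~\ref{def:bockstein} with $n-1=2k$, so the substance is the identification $\op{e}_n=\tau_{\{k\}}$ in $\op{H}^n({\op{B}}\op{GL}_n,\mathbf{I}^n(\det\gamma_n^\vee))$. The plan is to show that both classes lie in $\op{Im}\beta_{\det\gamma_n^\vee}$, that their reductions under $\rho$ coincide, and then to invoke injectivity of $\rho$ on this image via \prettyref{lem:wsplit}.

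First, $\tau_{\{k\}}$ lies in $\op{Im}\beta_{\det\gamma_n^\vee}$ tautologically. For $\op{e}_n$, I would show that its image in $\op{H}^n({\op{B}}\op{GL}_n,\mathbf{W}(\det\gamma_n^\vee))$ vanishes; by exactness of the B\"ar sequence piece
\[
\op{Ch}^{n-1}({\op{B}}\op{GL}_n)\xrightarrow{\beta_{\det\gamma_n^\vee}}\op{H}^n({\op{B}}\op{GL}_n,\mathbf{I}^n(\det\gamma_n^\vee))\to \op{H}^n({\op{B}}\op{GL}_n,\mathbf{W}(\det\gamma_n^\vee))\to 0,
\]
such vanishing places $\op{e}_n$ in the image of $\beta_{\det\gamma_n^\vee}$. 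The required vanishing is the algebraic shadow of the classical fact that the rational cohomology of ${\op{B}}\op{O}(n)$ with twisted coefficients vanishes in odd degree $n$---rationally only Pontryagin classes (in degrees divisible by $4$) and, for even rank, a top Euler class contribute. In the algebraic setting this is to be established via the localization sequence for $\mathbf{W}$-cohomology from \prettyref{prop:locgln}, by induction on $n$.

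Next, I compute the reductions. By \prettyref{prop:sw}, $\rho(\op{e}_n)=\overline{\op{c}}_n$. By \prettyref{prop:twistedsq2}, using $\overline{\op{c}}_1(\det\gamma_n^\vee)=\overline{\op{c}}_1$,
\[
\rho(\tau_{\{k\}})=\op{Sq}^2_{\det\gamma_n^\vee}(\overline{\op{c}}_{n-1})=\overline{\op{c}}_1\,\overline{\op{c}}_{n-1}+\op{Sq}^2(\overline{\op{c}}_{n-1}).
\]
The motivic Wu formula applied to the universal rank $n$ bundle (where $\overline{\op{c}}_j=0$ for $j>n$) gives $\op{Sq}^2(\overline{\op{c}}_{n-1})=\overline{\op{c}}_1\,\overline{\op{c}}_{n-1}+(n-2)\,\overline{\op{c}}_n$; since $n=2k+1$ makes $n-2$ odd, the cross terms cancel modulo $2$ and we get $\rho(\tau_{\{k\}})=\overline{\op{c}}_n=\rho(\op{e}_n)$.

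Finally, once the $\mathbf{W}$-cohomology of ${\op{B}}\op{GL}_n$ is known to be free over $\op{W}(F)$ in the relevant degree, \prettyref{lem:wsplit} ensures that $\rho$ is injective on $\op{Im}\beta_{\det\gamma_n^\vee}$, and the equality of reductions computed above then forces $\op{e}_n=\tau_{\{k\}}$. The main obstacle is the $\mathbf{W}$-cohomology input---both the degree-$n$ vanishing of Step~1 and the freeness used in the concluding step---which is a nontrivial structural computation set up in the inductive proofs of Section~\ref{sec:inproofs}; once granted, the verification of the identity reduces to the Wu-formula computation above.
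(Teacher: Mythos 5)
Your reduction computation is fine (and matches the Wu formula of Proposition~\ref{prop:wuformula}), but the overall argument has a genuine circularity problem relative to how these facts are actually established. Your two key inputs are (a) the vanishing of $\op{e}_n$ in $\op{H}^n({\op{B}}\op{GL}_n,\mathbf{W}(\det\gamma_n^\vee))$ for $n$ odd, so that $\op{e}_n\in\op{Im}\beta_{\det\gamma_n^\vee}$, and (b) freeness of the $\mathbf{W}$-cohomology of ${\op{B}}\op{GL}_n$, so that Lemma~\ref{lem:wsplit} makes $\rho$ injective on $\op{Im}\beta_{\det\gamma_n^\vee}$. Both are part of Proposition~\ref{prop:glnw}, and the odd-rank case of that proposition is proved in the paper \emph{using} Proposition~\ref{prop:eulerrel}: it is precisely the statement $\op{e}_n=\tau_{\{k\}}$ (hence $\op{e}_n\in\op{Im}\beta$, hence $\op{e}_n=0$ in $\mathbf{W}$-cohomology) that makes the boundary map $\partial$ in the localization sequence surjective and drives the inductive step. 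Your suggestion that the vanishing in (a) can itself ``be established via the localization sequence of Proposition~\ref{prop:locgln} by induction'' does not work as stated: in the relevant piece of that sequence, $\op{e}_n=0$ in $\mathbf{W}$-cohomology is \emph{equivalent} to $\partial(\op{e}_{n-1})$ being a unit in $\op{W}(F)$, and the sequence alone cannot decide between this and $\partial(\op{e}_{n-1})=0$; some independent input is needed.

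The paper avoids this by proving Proposition~\ref{prop:eulerrel} directly as a citation of \cite[Theorem 10.1]{fasel:ij} (after matching Stiefel--Whitney class conventions via Proposition~\ref{prop:sw}), and only afterwards feeds it into the $\mathbf{W}$-cohomology computation. Your route could be salvaged if you supply an independent proof of (a), for instance the vanishing of Euler classes of odd-rank bundles in $\eta$-inverted ($\mathbf{W}$-) theories as in \cite{ananyevskiy}, and then rerun the induction for Proposition~\ref{prop:glnw} without invoking Proposition~\ref{prop:eulerrel}; but as written, deferring both (a) and (b) to ``the inductive proofs of Section~\ref{sec:inproofs}'' assumes results whose proofs in this paper rest on the very proposition you are trying to prove.
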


\begin{proof}
This is proved in \cite[Theorem 10.1]{fasel:ij}, noting that our Stiefel--Whitney classes in Proposition~\ref{prop:sw} agree with those in loc.cit., cf. also \cite[Proposition 7.5]{chow-witt}. 
\end{proof}

Combining Lemma~\ref{lem:baertorsion} and Proposition~\ref{prop:eulerrel}, we see that the Euler class $\op{e}_n\in\op{H}^n({\op{B}}\op{GL}_n,\mathbf{I}^n(\det\gamma_n^\vee))$ is $\op{I}(F)$-torsion if $n$ is odd.

\begin{remark}
  Note that on ${\op{B}}\op{GL}_n$, the Bockstein classes don't contain more information than the Stiefel--Whitney classes; it will follow from Proposition~\ref{prop:glnw} combined with Lemma~\ref{lem:wsplit}, the reduction morphism
  \[
  \rho\colon \op{H}^m({\op{B}}\op{GL}_n,\mathbf{I}^m(\mathscr{L}))\to \op{Ch}^m({\op{B}}\op{GL}_n)
  \]
  is injective on the image of $\beta_{\mathscr{L}}$. However, for a smooth scheme $X$, it is possible that the Bockstein class is nontrivial while its reduction in the mod 2 Chow ring is trivial. Topologically, this happens if the integral Stiefel--Whitney class is divisible by $2$; divisibility results for the integral Stiefel--Whitney classes arise e.g. in Massey's discussion of the obstruction theory for existence of almost complex structures. 
\end{remark}

\subsection{The Wu formula for the Chow-ring}

We shortly discuss the action of the Steenrod squares $\op{Sq}^2_{\mathscr{L}}$ on $\op{Ch}^\bullet({\op{B}}\op{GL}_n)$. Essentially, this is described by the Wu formula. 

\begin{proposition}
\label{prop:wuformula}
The untwisted Steenrod square $\op{Sq}^2_{\mathscr{O}}$ is given by 
\[
\op{Sq}^2_{\mathscr{O}}\colon  \op{Ch}^\bullet({\op{B}}\op{GL}_n) \to \op{Ch}^\bullet({\op{B}}\op{GL}_n)\colon \overline{\op{c}}_j\mapsto \overline{\op{c}}_1\overline{\op{c}}_j+(j-1)\overline{\op{c}}_{j+1}.
\]
The twisted Steenrod square $\op{Sq}^2_{\det\gamma_n}$ is given by 
\[
\op{Sq}^2_{\det\gamma_n}\colon  \op{Ch}^\bullet({\op{B}}\op{GL}_n) \to \op{Ch}^\bullet({\op{B}}\op{GL}_n)\colon \overline{\op{c}}_j\mapsto (j-1)\overline{\op{c}}_{j+1}.
\]
(Twisted) Steenrod squares of other elements are determined by the above formulas, the derivation property of the Steenrod square $\op{Sq}^2_{\mathscr{O}}$ and the relation $\op{Sq}^2_{\delta\gamma_n}(x)=\overline{\op{c}}_1\cdot x+\op{Sq}^2_{\mathscr{O}}(x)$. 
\end{proposition}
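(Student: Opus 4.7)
The plan is to deduce the twisted formula from the untwisted one using Proposition~\ref{prop:twistedsq2}, and to prove the untwisted Wu formula by the splitting principle. Since $\overline{\op{c}}_1(\det\gamma_n) = \overline{\op{c}}_1$ in $\op{Ch}^1({\op{B}}\op{GL}_n)$, Proposition~\ref{prop:twistedsq2} gives
\[
\op{Sq}^2_{\det\gamma_n}(\overline{\op{c}}_j) = \overline{\op{c}}_1\overline{\op{c}}_j + \op{Sq}^2_{\mathscr{O}}(\overline{\op{c}}_j);
\]
substituting the claimed untwisted formula makes the two $\overline{\op{c}}_1\overline{\op{c}}_j$ contributions cancel in characteristic two, leaving $(j-1)\overline{\op{c}}_{j+1}$. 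The final assertion of the proposition, extending the formulas to arbitrary classes, follows from the derivation property of $\op{Sq}^2_{\mathscr{O}}$ (justified below) together with Proposition~\ref{prop:twistedsq2} applied to products.

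To establish the untwisted formula I restrict to the diagonal maximal torus $T \subset \op{GL}_n$. The restriction
\[
\op{Ch}^\bullet({\op{B}}\op{GL}_n) \hookrightarrow \op{Ch}^\bullet({\op{B}} T) \cong \mathbb{F}_2[t_1,\ldots,t_n]
\]
is injective with image the symmetric polynomials and sends $\overline{\op{c}}_j$ to the elementary symmetric polynomial $e_j(t_1,\ldots,t_n)$, so by naturality it suffices to compute $\op{Sq}^2_{\mathscr{O}}(e_j)$ in $\mathbb{F}_2[t_1,\ldots,t_n]$. Since $\op{Sq}^2_{\mathscr{O}} = \op{Sq}^2$ is the motivic Steenrod square (Proposition~\ref{prop:twistedsq2} with trivial twist), the universal identity $\op{Sq}^{2n}(x) = x^2$ for $x \in H^{2n,n}$ applied to $t_i \in H^{2,1}({\op{B}}\mathbb{G}_m, \mathbb{F}_2)$ gives $\op{Sq}^2(t_i) = t_i^2$. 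On ${\op{B}} T$ we also have $\op{Sq}^1(t_i) = 0$: the bidegree forces $\op{Sq}^1(t_i) \in H^{3,1}({\op{B}}\mathbb{G}_m, \mathbb{F}_2)$, a group which vanishes over a perfect base field. Because $\op{Sq}^1$ is itself a derivation, it vanishes on all of $\mathbb{F}_2[t_1,\ldots,t_n]$, and the cross term in Voevodsky's Cartan formula $\op{Sq}^2(xy) = \op{Sq}^2(x) y + \tau\op{Sq}^1(x) \op{Sq}^1(y) + x\op{Sq}^2(y)$ drops out on ${\op{B}} T$. Thus $\op{Sq}^2$ acts as a derivation on $\op{Ch}^\bullet({\op{B}} T)$.

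Given the derivation property, $\op{Sq}^2_{\mathscr{O}}(e_j) = \sum_{i=1}^n t_i^2\, e_{j-1}(t_1,\ldots,\hat t_i,\ldots,t_n)$. Using the recursion $e_j = e_j(\hat t_i) + t_i e_{j-1}(\hat t_i)$ and counting monomials gives $\sum_i t_i e_j(\hat t_i) = (j+1) e_{j+1}$, whence in $\mathbb{F}_2$
\[
\sum_i t_i^2 e_{j-1}(\hat t_i) = e_1 e_j + (j+1) e_{j+1} = e_1 e_j + (j-1) e_{j+1}.
\]
Pulling back along the splitting principle yields the desired formula $\op{Sq}^2_{\mathscr{O}}(\overline{\op{c}}_j) = \overline{\op{c}}_1 \overline{\op{c}}_j + (j-1)\overline{\op{c}}_{j+1}$. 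The main obstacle in this plan is the derivation property of $\op{Sq}^2$ on mod $2$ Chow, which I would settle via the vanishing of $\op{Sq}^1$ in the relevant motivic bidegree on ${\op{B}} T$; once that is in place, the remainder is a direct symmetric function manipulation plus the twist reduction via Proposition~\ref{prop:twistedsq2}.
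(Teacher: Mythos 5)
Your proposal is correct, but it takes a genuinely different route from the paper. The paper does not prove the Wu formula from scratch: it observes that the two displayed formulas are equivalent via Proposition~\ref{prop:twistedsq2} (since $\overline{\op{c}}_1(\det\gamma_n)=\overline{\op{c}}_1$), imports the twisted formula on even-index classes, $\op{Sq}^2_{\det\gamma_n}(\overline{\op{c}}_{2i})=\overline{\op{c}}_{2i+1}$, from \cite[Proposition 10.3]{fasel:ij}, and then gets the remaining vanishing on odd-index classes from the computation $\op{Sq}^2_{\det\gamma_n}\circ\op{Sq}^2_{\det\gamma_n}=0$ carried out in the proof (which uses the derivation property of $\op{Sq}^2_{\mathscr{O}}$ and $\op{Sq}^2_{\mathscr{O}}(\overline{\op{c}}_1)=\overline{\op{c}}_1^2$). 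You instead prove the untwisted formula directly by the splitting principle: restrict along the injection $\op{Ch}^\bullet({\op{B}}\op{GL}_n)\hookrightarrow\op{Ch}^\bullet({\op{B}}T)$, use the instability relation $\op{Sq}^2(t_i)=t_i^2$ in bidegree $(2,1)$, kill the Cartan cross-term because $\op{Sq}^1(t_i)=0$ (equivalently because the relevant groups $H^{2p+1,p}$ of ${\op{B}}T$ vanish), and conclude with the symmetric-function identity $\sum_i t_i^2\,e_{j-1}(\hat t_i)=e_1e_j+(j+1)e_{j+1}$, which mod $2$ is the claimed formula (and handles the boundary case $j=n$ correctly, as $e_{n+1}=0$); the twisted statement then drops out of Proposition~\ref{prop:twistedsq2} exactly as in the paper. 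Your route has the merit of being self-contained--- the paper itself remarks that it knows no reference for the Wu formula in mod 2 Chow theory---at the cost of invoking the motivic Cartan formula and instability relation; to make it airtight you should say explicitly that the $\op{Sq}^2$ of Proposition~\ref{prop:twistedsq2} is the Voevodsky--Brosnan Steenrod square restricted to the Chow diagonal (so those relations apply), and note that the derivation property of $\op{Sq}^2_{\mathscr{O}}$ on $\op{Ch}^\bullet({\op{B}}\op{GL}_n)$ itself, which the last assertion of the proposition uses, follows from your ${\op{B}}T$ argument by injectivity of the restriction (or directly, since $\op{Sq}^1$ kills mod 2 reductions of integral Chow classes).
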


\begin{proof}
  Probably the Wu formula for the Stiefel--Whitney classes in Chow theory mod 2 is well-known, but right now I don't know of a reference. The first and second statement are equivalent by using Proposition~\ref{prop:twistedsq2} and noting that $\overline{\op{c}}_1(\det\gamma_n)=\overline{\op{c}}_1$. The second statement for odd Stiefel--Whitney classes is proved in \cite[Proposition 10.3]{fasel:ij}. For even Stiefel--Whitney classes, the required vanishing follows from
  \begin{eqnarray*}
    \op{Sq}^2_{\det\gamma_n}\circ \op{Sq}^2_{\det\gamma_n}(x)&=&\overline{\op{c}}_1\cdot \op{Sq}^2_{\det\gamma_n}(x)+\op{Sq}^2_{\mathscr{O}}\circ\op{Sq}^2_{\det\gamma_n}(x)\\
    &=&\overline{\op{c}}_1^2\cdot x+\overline{\op{c}}_1\cdot\op{Sq}^2_{\mathscr{O}}(x) + \op{Sq}^2_{\mathscr{O}}(\overline{\op{c}}_1\cdot x)+\op{Sq}^2_{\mathscr{O}}\circ\op{Sq}^2_{\mathscr{O}}(x)\\
    &=&\overline{\op{c}}_1^2\cdot x+\overline{\op{c}}_1\cdot\op{Sq}^2_{\mathscr{O}}(x) + \overline{\op{c}}_1\cdot \op{Sq}^2_{\mathscr{O}}(x)+x\cdot\op{Sq}^2_{\mathscr{O}}(\overline{\op{c}}_1)=0
    \end{eqnarray*}
\end{proof}

\begin{corollary}
\label{cor:wukernel}
The kernel of the untwisted Steenrod square $\op{Sq}^2_{\mathscr{O}}$ is given by the subring 
\[
\mathbb{Z}/2\mathbb{Z}[\overline{\op{c}}_i^2,\overline{\op{c}}_1\overline{\op{c}}_{2i}+\overline{\op{c}}_{2i+1},\overline{\op{c}}_1\overline{\op{c}}_n] \subseteq\mathbb{Z}/2\mathbb{Z}[\overline{\op{c}}_1,\dots,\overline{\op{c}}_n]= \op{Ch}^\bullet({\op{B}}\op{GL}_n).
\]
The kernel of the twisted Steenrod square $\op{Sq}^2_{\det\gamma_n}$ is given by the subring 
\[
\mathbb{Z}/2\mathbb{Z}[\overline{\op{c}}_{2i+1},\overline{\op{c}}_{2i}^2,\overline{\op{c}}_n] \subseteq\mathbb{Z}/2\mathbb{Z}[\overline{\op{c}}_1,\dots,\overline{\op{c}}_n]= \op{Ch}^\bullet({\op{B}}\op{GL}_n).
\]
\end{corollary}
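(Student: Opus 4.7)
The untwisted case rests on the fact that $\op{Sq}^2_{\mathscr{O}}$ is a derivation on the polynomial ring $\op{Ch}^\bullet({\op{B}}\op{GL}_n) = \mathbb{Z}/2\mathbb{Z}[\overline{\op{c}}_1,\dots,\overline{\op{c}}_n]$, so its kernel is automatically a subring and it suffices to exhibit a generating set. First I would verify that each of the three families of proposed generators lies in the kernel by direct computation from the Wu formula of Proposition~\ref{prop:wuformula}: the squares $\overline{\op{c}}_i^2$ are killed by any derivation in characteristic $2$; for the combinations $\overline{\op{c}}_1\overline{\op{c}}_{2i}+\overline{\op{c}}_{2i+1}$ the two contributions to $\op{Sq}^2_{\mathscr{O}}$ cancel because $(2i-1)\equiv 1\pmod 2$; and $\op{Sq}^2_{\mathscr{O}}(\overline{\op{c}}_1\overline{\op{c}}_n) = 0$ using $\overline{\op{c}}_{n+1}=0$ together with the derivation rule.

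For the reverse inclusion I would perform the triangular change of variables $z_{2i+1}:=\overline{\op{c}}_1\overline{\op{c}}_{2i}+\overline{\op{c}}_{2i+1}$ for $3\leq 2i+1\leq n$, a unipotent automorphism of the polynomial ring. In the new coordinates $\op{Sq}^2_{\mathscr{O}}$ takes the particularly simple form $\overline{\op{c}}_1\mapsto\overline{\op{c}}_1^2$, $\overline{\op{c}}_{2i}\mapsto z_{2i+1}$, $z_{2i+1}\mapsto 0$, with a parity-dependent adjustment at the top variable reflecting $\overline{\op{c}}_{n+1}=0$. The polynomial ring factors as a tensor product of $\mathbb{Z}/2\mathbb{Z}[\overline{\op{c}}_1]$ with the pairs $\mathbb{Z}/2\mathbb{Z}[\overline{\op{c}}_{2i},z_{2i+1}]$, and on each factor the kernel is elementary ($\mathbb{Z}/2\mathbb{Z}[\overline{\op{c}}_1^2]$ and $\mathbb{Z}/2\mathbb{Z}[\overline{\op{c}}_{2i}^2,z_{2i+1}]$ respectively). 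Translating back to the original variables then recovers exactly the listed generators, with $\overline{\op{c}}_1\overline{\op{c}}_n$ arising as a boundary term. The twisted case proceeds in parallel via the relation $\op{Sq}^2_{\det\gamma_n}(x)=\overline{\op{c}}_1\cdot x+\op{Sq}^2_{\mathscr{O}}(x)$ of Proposition~\ref{prop:twistedsq2}: the Wu formula $\op{Sq}^2_{\det\gamma_n}(\overline{\op{c}}_j)=(j-1)\overline{\op{c}}_{j+1}$ already vanishes on the odd-indexed classes without any substitution, so the analogous K\"unneth-style analysis applies directly in the original variables.

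The main obstacle is the combinatorial step of checking that the K\"unneth-style decomposition genuinely captures $\ker\op{Sq}^2$ rather than just a subring of it: the tensor product of the factor-wise kernels is always contained in the total kernel, but ruling out unexpected ``mixed'' kernel elements requires either a Poincar\'e-series comparison in each graded piece or a descending induction that removes one variable at a time. The parity-dependent boundary behavior at $\overline{\op{c}}_n$ then only requires a short case split, with the top class contributing to the odd-indexed generators, to the even squares, or to the boundary generator $\overline{\op{c}}_1\overline{\op{c}}_n$ depending on whether $n$ is odd or even (and analogously in the twisted case).
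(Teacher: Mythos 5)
The containment half of your argument is fine and is exactly what the paper extracts from the Wu formula (Proposition~\ref{prop:wuformula}); the substance of the corollary is the reverse inclusion, and there your mechanism breaks. After your substitution $z_{2i+1}=\overline{\op{c}}_1\overline{\op{c}}_{2i}+\overline{\op{c}}_{2i+1}$ the derivation does act factor-wise on $\mathbb{Z}/2\mathbb{Z}[\overline{\op{c}}_1]$ and the pairs $\mathbb{Z}/2\mathbb{Z}[\overline{\op{c}}_{2i},z_{2i+1}]$ (when $n$ is odd), but the kernel of a sum of derivations on a tensor product is \emph{not} the tensor product of the factor-wise kernels, and the ``mixed'' classes you propose to rule out genuinely exist: since $\op{Sq}^2_{\mathscr{O}}\circ\op{Sq}^2_{\mathscr{O}}=0$, the kernel contains the whole image of $\op{Sq}^2_{\mathscr{O}}$, for instance $\op{Sq}^2_{\mathscr{O}}(\overline{\op{c}}_1\overline{\op{c}}_2)=\overline{\op{c}}_1\overline{\op{c}}_3=\overline{\op{c}}_1z_3+\overline{\op{c}}_1^2\overline{\op{c}}_2$, which lies in the kernel for every $n\geq 3$ but is not a polynomial in $\overline{\op{c}}_1^2,\overline{\op{c}}_{2i}^2,z_{2i+1}$ (the degree-four part of that subalgebra is spanned by $\overline{\op{c}}_1^4$ and $\overline{\op{c}}_2^2$). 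So ``translating back recovers exactly the listed generators'' is not something the factor-wise computation can deliver; accounting for these image classes is precisely the nontrivial bookkeeping, and your proposal only names the tools (``Poincar\'e-series comparison or descending induction'') without carrying them out. In addition, for $n$ even the factorization itself fails at the top: $\op{Sq}^2_{\mathscr{O}}(\overline{\op{c}}_n)=\overline{\op{c}}_1\overline{\op{c}}_n$, so $\mathbb{Z}/2\mathbb{Z}[\overline{\op{c}}_n]$ is not a stable tensor factor and the block containing $\overline{\op{c}}_1$ and $\overline{\op{c}}_n$ has to be analyzed jointly, which your ``short case split'' does not do.

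The twisted case is also not ``the same analysis in the original variables'': by Proposition~\ref{prop:twistedsq2}, $\op{Sq}^2_{\det\gamma_n}(x)=\overline{\op{c}}_1x+\op{Sq}^2_{\mathscr{O}}(x)$ is a twisted derivation, not a derivation, so its kernel is not multiplicatively closed and no K\"unneth-type argument applies to it as such; e.g.\ $\overline{\op{c}}_1$ and $\overline{\op{c}}_3$ are each annihilated while $\op{Sq}^2_{\det\gamma_n}(\overline{\op{c}}_1\overline{\op{c}}_3)=\overline{\op{c}}_1^2\overline{\op{c}}_3\neq 0$; the twisted kernel should be handled as a module over the untwisted one. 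The paper's own route is different at exactly these points: it verifies via the Wu formula that the displayed elements are killed, observes that on the generators $\op{Sq}^2_{\det\gamma_n}$ is given by the $\overline{\op{c}}_1$-free formula, i.e.\ the same formula as the untwisted square on $\op{Ch}^\bullet({\op{B}}\op{SL}_n)$, so that the computation of \cite{chow-witt} can be quoted, and it defers the remaining kernel bookkeeping (in particular the treatment of the classes coming from $\op{Im}\op{Sq}^2_{\mathscr{L}}$) to the argument of \cite[p.~285]{cadek}. As it stands, your proposal therefore has a genuine gap in the identification direction, and the decomposition on which it is based does not compute the kernel.
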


\begin{proof}
The claims follow from the Wu formula in Proposition~\ref{prop:wuformula}. The twisted Steenrod square is given essentially by the same formula as the Steenrod square in $\op{Ch}^\bullet({\op{B}}\op{SL}_n)$, hence the formulas from \cite{chow-witt} apply. For the untwisted Steenrod square $\op{Sq}^2_{\mathscr{O}}$, the even classes $\overline{\op{c}}_{2i}$ map to $\overline{\op{c}}_1\overline{\op{c}}_{2i}+\overline{\op{c}}_{2i+1}$, hence the latter classes are in the kernel of the Steenrod square; similarly for $\overline{\op{c}}_1\overline{\op{c}}_n$. The description of the kernels follow from that, cf. also \cite[p. 285]{cadek}. 
\end{proof}

\begin{corollary}
  \label{cor:wuimage}
  Consider the mod 2 Chow ring $\op{Ch}^\bullet({\op{B}}\op{GL}_n)$. The images of the Steenrod squares $\op{Sq}^2_{\mathscr{L}}$ are contained in the subring generated by $\op{Sq}^2_{\mathscr{O}}(\overline{\op{c}}_{2j_1}\cdots\overline{\op{c}}_{2j_l})$,  $\op{Sq}^2_{\det\gamma_n}(\overline{\op{c}}_{2j_1}\cdots\overline{\op{c}}_{2j_l})$, $\op{Sq}^2_{\det\gamma_n}(1)$, $\overline{\op{c}}_{2i}^2$ and $\overline{\op{c}}_n$. 
\end{corollary}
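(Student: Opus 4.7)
The plan is to show that every monomial $M = \prod_{i=1}^n \overline{\op{c}}_i^{a_i}$ satisfies both $\op{Sq}^2_{\mathscr{O}}(M) \in R$ and $\op{Sq}^2_{\det\gamma_n}(M) \in R$, where $R$ denotes the claimed subring; by linearity this handles all elements. I would begin by factoring $M = K \cdot N$, where $N = \overline{\op{c}}_{2j_1}\cdots\overline{\op{c}}_{2j_l}$ is the square-free product of those $\overline{\op{c}}_{2j}$ with $2j \leq n-1$ appearing with odd exponent in $M$ (so $j_l \leq [(n-1)/2]$, matching the indexing in Definition~\ref{def:bockstein}), and $K$ collects everything else: powers of $\overline{\op{c}}_i$ for odd $i \leq n$, even powers of $\overline{\op{c}}_{2i}$ for $2i \leq n-1$, and the full power of $\overline{\op{c}}_n$. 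Each generator of $K$ is visibly in $R$: we have $\overline{\op{c}}_1 = \op{Sq}^2_{\det\gamma_n}(1)$; by the Wu formula (Proposition~\ref{prop:wuformula}) the class $\overline{\op{c}}_{2j+1} = \op{Sq}^2_{\det\gamma_n}(\overline{\op{c}}_{2j})$ for $j \leq [(n-1)/2]$, which covers all odd indices $\leq n-1$ (and also $\overline{\op{c}}_n$ when $n$ is odd); the classes $\overline{\op{c}}_{2i}^2$ and $\overline{\op{c}}_n$ are listed generators. Hence $K \in R$.

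Next I would invoke the derivation property of $\op{Sq}^2_{\mathscr{O}}$ to write
\[
\op{Sq}^2_{\mathscr{O}}(M) = \op{Sq}^2_{\mathscr{O}}(K) \cdot N + K \cdot \op{Sq}^2_{\mathscr{O}}(N).
\]
The second summand lies in $R$ immediately: $K \in R$, and $\op{Sq}^2_{\mathscr{O}}(N)$ is one of the listed generators of $R$ (with $\op{Sq}^2_{\mathscr{O}}(1) = 0$ handling the case $l = 0$).

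The delicate step is the first summand, since $N$ need not itself lie in $R$. Here I would exploit the Wu formula more finely: a direct computation shows that $\op{Sq}^2_{\mathscr{O}}(g) \in \{0, \overline{\op{c}}_1 g\}$ for every generator $g$ of $K$ (using that $(2j) \equiv 0$ and $(2j-1) \equiv 1$ modulo $2$, and that $\overline{\op{c}}_{n+1} = 0$ in $\op{Ch}^\bullet({\op{B}}\op{GL}_n)$). Applying the derivation property to $K$ then yields $\op{Sq}^2_{\mathscr{O}}(K) = \epsilon\, \overline{\op{c}}_1 K$ for some $\epsilon \in \mathbb{Z}/2$, whence $\op{Sq}^2_{\mathscr{O}}(K)\cdot N = \epsilon\, K \cdot (\overline{\op{c}}_1 N)$. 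The decisive observation now is that Proposition~\ref{prop:twistedsq2} gives the identity $\overline{\op{c}}_1 \cdot N = \op{Sq}^2_{\mathscr{O}}(N) + \op{Sq}^2_{\det\gamma_n}(N)$, a sum of two listed generators and hence in $R$. Combining shows $\op{Sq}^2_{\mathscr{O}}(M) \in R$. The twisted case follows immediately from $\op{Sq}^2_{\det\gamma_n}(M) = \overline{\op{c}}_1 M + \op{Sq}^2_{\mathscr{O}}(M) = K \cdot (\overline{\op{c}}_1 N) + \op{Sq}^2_{\mathscr{O}}(M)$.

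The main obstacle I anticipate is the bookkeeping in the last step: the argument requires simultaneously invoking both the twisted and untwisted Steenrod square to rewrite $\overline{\op{c}}_1 N$ as an element of $R$, and this is precisely what forces the statement to include both families $\op{Sq}^2_{\mathscr{O}}(\overline{\op{c}}_{2j_1}\cdots\overline{\op{c}}_{2j_l})$ and $\op{Sq}^2_{\det\gamma_n}(\overline{\op{c}}_{2j_1}\cdots\overline{\op{c}}_{2j_l})$ among the generators. Recognizing that this mixing trick closes the argument is the non-obvious point of the proof; everything else reduces to the derivation property and the Wu formulas already recorded.
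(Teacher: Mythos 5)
Your proposal is correct and takes essentially the same route as the paper: reduce by linearity to monomials, use the derivation property and the Wu formula to split off the part of the monomial already lying in the subring, and use the interchange between twisted and untwisted squares to handle the remaining squarefree even factor. The only difference is packaging: the paper pulls out squares, odd classes and $\overline{\op{c}}_n$ iteratively, toggling the twist via identities like $\op{Sq}^2_{\mathscr{O}}(\overline{\op{c}}_{2i+1}x)=\overline{\op{c}}_{2i+1}\op{Sq}^2_{\det\gamma_n}(x)$, whereas you perform one decomposition $M=K\cdot N$ and absorb the toggling into $\op{Sq}^2_{\mathscr{O}}(K)=\epsilon\,\overline{\op{c}}_1K$ together with $\overline{\op{c}}_1 N=\op{Sq}^2_{\mathscr{O}}(N)+\op{Sq}^2_{\det\gamma_n}(N)$.
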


\begin{proof}
  The Steenrod squares $\op{Sq}^2_{\mathscr{L}}$ are linear. To determine generators for the image, it thus suffices to consider Steenrod squares of monomials in the Chern classes.

  Since $\op{Sq}^2_{\mathscr{O}}$ is a derivation, we have $\op{Sq}^2_{\mathscr{O}}(x^2)=2x\op{Sq}^2_{\mathscr{O}}(x)=0$ and $\op{Sq}^2_{\mathscr{O}}(x^2y)=x^2\op{Sq}^2_{\mathscr{O}}(y)$. In particular, we can always pull out squares. For even Stiefel--Whitney classes, these squares are explicitly included as generators in the statement. For the odd Stiefel--Whitney classes, we have
  \[
  \op{Sq}^2_{\mathscr{O}}(\overline{\op{c}}_{2i}\overline{\op{c}}_{2i+1})=\overline{\op{c}}_{2i}\op{Sq}^2_{\mathscr{O}}(\overline{\op{c}}_{2i+1})+\overline{\op{c}}_{2i+1}\op{Sq}^2_{\mathscr{O}}(\overline{\op{c}}_{2i})=2\overline{\op{c}}_1\overline{\op{c}}_{2i}\overline{\op{c}}_{2i+1}+\overline{\op{c}}_{2i+1}^2.
  \]
  It thus suffices to show that the Steenrod squares of all products $\overline{\op{c}}_{j_1}\dots\overline{\op{c}}_{j_m}$ with no repeating factors are contained in the subring as claimed.

  For the odd Stiefel--Whitney classes we have
  \[
  \op{Sq}^2_{\mathscr{O}}(\overline{\op{c}}_{2i+1}x)=\overline{\op{c}}_{2i+1}\op{Sq}^2(x)+\overline{\op{c}}_1\overline{\op{c}}_{2i+1}x=\overline{\op{c}}_{2i+1}\op{Sq}^2_{\det\gamma_n}(x).
  \]
  Since $\overline{\op{c}}_{2i+1}=\op{Sq}^2_{\det\gamma_n}(\overline{\op{c}}_{2i})$ with the special case $\overline{\op{c}}_1=\op{Sq}^2_{\det\gamma_n}(1)$, the odd Stiefel--Whitney classes are also among the generators of the subring listed in the claim. Therefore,  we can also pull out all the odd Stiefel--Whitney classes from the products $\overline{\op{c}}_{j_1}\dots\overline{\op{c}}_{j_m}$. A similar calculation shows that we can also pull out $\overline{\op{c}}_n$, which is also included explicitly among the generators. We have thus established the claim for $\op{Sq}^2_{\mathscr{O}}$.

  To show the claim for $\op{Sq}^2_{\det\gamma_n}$, we first have $\op{Sq}^2_{\det\gamma_n}(x^2)=\overline{\op{c}}_1x^2$ and
  \[
  \op{Sq}^2_{\det\gamma_n}(x^2y)=\overline{\op{c}}_1x^2y+x^2\op{Sq}^2_{\mathscr{O}}(y)=x^2\op{Sq}^2_{\det\gamma_n}(y).
  \]
  This tells us again that we can always pull out squares. For the odd Stiefel--Whitney classes we have
  \[
  \op{Sq}^2_{\det\gamma_n}(\overline{\op{c}}_{2i+1}x)=\overline{\op{c}}_1\overline{\op{c}}_{2i+1}x+\overline{\op{c}}_{2i+1}\op{Sq}^2_{\det\gamma_n}(x)=\overline{\op{c}}_{2i+1}\op{Sq}^2_{\mathscr{O}}(x).
  \]
  Therefore,we can also pull out odd Stiefel--Whitney classes (and by a similar computation also $\overline{\op{c}}_n$). The claim is proved.
\end{proof}

\subsection{The candidate presentation}

We define an appropriate graded ring $\mathscr{R}_n/\mathscr{I}_n$ which we will prove to be isomorphic to  $\op{H}^\bullet({\op{B}}\op{GL}_n,\mathbf{I}^\bullet\oplus \mathbf{I}^\bullet(\det\gamma_n^\vee))$. The ring will be graded by $\mathbb{Z}\oplus\mathbb{Z}/2\mathbb{Z}$, where the degrees $(n,0)$ are those with $\mathbf{I}^\bullet$-coefficients, and the degrees $(n,1)$ are those with $\mathbf{I}^\bullet(\det\gamma_n^\vee)$-coefficients. Following \cite{cadek}, we use the notation $\Delta(J,J')=(J\cup J')\setminus(J\cap J')$ for the symmetric difference of two subsets $J$ and $J'$ of a given set. 

\begin{definition}
\label{def:rngln}
Let $F$ be a field of characteristic $\neq 2$ and denote by $\op{W}(F)$ the Witt ring of quadratic forms over $F$. For a natural number $n\geq 1$, we define the $\mathbb{Z}\oplus\mathbb{Z}/2\mathbb{Z}$-\emph{graded-commutative} $\op{W}(F)$-algebra 
\[
\mathscr{R}_n=\op{W}(F)\left[P_1,\dots,P_{[(n-1)/2]},X_n, B_J, T_J, T_\emptyset\right]. 
\]

The classes $P_i$ sit in degree $(4i,0)$, the class $X_n$ in degree $(n,1)$. For the classes $B_J$ and $T_J$, the index set $J$ runs through the sets $\{j_1,\dots,j_l\}$ of natural numbers with $0<j_1<\dots<j_l\leq[(n-1)/2]$, and the degrees of $B_J$ and $T_J$ are $(d,0)$ and $(d,1)$ with $d=1+2\sum_{a=1}^lj_a$, respectively. By convention $B_\emptyset=0$. 

Let $\mathscr{I}_n\subset \mathscr{R}_n$ be the ideal generated by the following relations:
\begin{enumerate}
\item $\op{I}(F) B_J=\op{I}(F)T_J=\op{I}(F)T_\emptyset=0$.
\item If $n=2k+1$ is odd, $X_{2k+1}=T_{\{k\}}$. 
\item For two index sets $J$ and $J'$ where $J'$ can be empty, we have 
\begin{eqnarray*}
B_J\cdot B_{J'}&=&\sum_{k\in J} B_{\{k\}}\cdot P_{(J\setminus\{k\})\cap J'}\cdot B_{\Delta(J\setminus\{k\},J')}\\
B_J\cdot T_{J'}&=&\sum_{k\in J}B_{\{k\}}\cdot P_{(J\setminus\{k\})\cap J'}\cdot T_{\Delta(J\setminus\{k\},J')}\\
T_J\cdot B_{J'}&=&B_J\cdot T_{J'} + T_\emptyset \cdot P_{J\cap J'}\cdot B_{\Delta(J,J')}\\
T_J\cdot T_{J'}&=&B_J\cdot B_{J'}+T_\emptyset\cdot P_{J\cap J'}\cdot T_{\Delta(J,J')}
\end{eqnarray*}
where we set $P_A=\prod_{i=1}^l P_{a_i}$ for an index set $A=\{a_1,\dots,a_l\}$.
\end{enumerate}
\end{definition}

\begin{remark}
  Note that there are slight differences in the indexing sets between the formulas in \cite{brown} and \cite{cadek}. For the Pontryagin classes, this difference is due to fact that the Euler class squares to the top Pontryagin class. So in \v Cadek's presentation, there is no need to introduce the top Pontryagin class; on the other hand, Brown only computes cohomology with trivial coefficients and he has to introduce the top Pontryagin class separately. The same thing is true for the Bockstein classes:
  \[
  \beta_{\mathscr{O}}(\overline{\op{c}}_{2j_1}\cdots\overline{\op{c}}_{2j_l})=\beta_{\det\gamma_n^\vee}(\overline{\op{c}}_{2j_1}\cdots\overline{\op{c}}_{2j_{l-1}})\op{e}_{n-1} \textrm{ if } j_l=(n-1)/2;
  \]
  and this relation cannot be expressed in cohomology with trivial coefficients. Moreover, the reason why Brown's additional $\overline{\op{c}}_1$-factors in the Bockstein classes can be omitted in \v Cadek's presentation is given by the formula
  \[
  \beta_{\mathscr{O}}(\overline{\op{c}}_1\overline{\op{c}}_{2j_1}\cdots\overline{\op{c}}_{2j_l})=\beta_{\det\gamma_n^\vee}(\overline{\op{c}}_{2j_1}\cdots\overline{\op{c}}_{2j_{l-1}})\beta_{\det\gamma_n^\vee}(1).
  \]
\end{remark}

\begin{definition}
\label{def:rnglnres}
Let $n\geq 2$ be a natural number. Define the $\op{W}(F)$-algebra homomorphism $\Phi_n\colon \mathscr{R}_n\to\mathscr{R}_{n-1}$ by setting
\begin{enumerate}
\item the element $P_i$ maps to $P_i$ if $i<(n-1)/2$ and maps to $X_{n-1}^2$ if $i=(n-1)/2$, 
\item the element $X_n$ maps to $0$, 
\item for the index set $J=\{j_1,\dots,j_l\}$, we have 
\[
B_J\mapsto \left\{\begin{array}{ll} B_J & j_l<(n-1)/2\\ T_{J'}\cdot X_{n-1} & j_l=(n-1)/2, J=J'\sqcup\{j_l\}\end{array}\right.
\]
\[
T_J\mapsto \left\{\begin{array}{ll} T_J & j_l<(n-1)/2\\ B_{J'}\cdot X_{n-1} & j_l=(n-1)/2, J=J'\sqcup\{j_l\}\end{array}\right.
\]
\end{enumerate}
\end{definition}

\begin{remark}
\label{rem:compatres}
The above formulas model the restriction of classes from ${\op{B}}\op{GL}_n$ to ${\op{B}}\op{GL}_{n-1}$. On the level of mod 2 Chow rings, we have  
\begin{eqnarray*}
\op{Sq}^2_{\mathscr{L}}(\overline{\op{c}}_{2j_1}\cdots \overline{\op{c}}_{2j_l})&=&\op{Sq}^2_{\mathscr{L}}(\overline{\op{c}}_{2j_1}\cdots \overline{\op{c}}_{2j_{l-1}})\overline{\op{c}}_{2j_l} + \overline{\op{c}}_1\overline{\op{c}}_{2j_1}\cdots\overline{\op{c}}_{2j_l}\\ &=&\op{Sq}^2_{\mathscr{L}\otimes\det\gamma_n^\vee}(\overline{\op{c}}_{2j_1}\cdots \overline{\op{c}}_{2j_{l-1}})\op{e}_{n-1},
\end{eqnarray*}
using Proposition~\ref{prop:twistedsq2}. Note that the formulas for restriction on the bottom of page~283 in \cite{cadek} contain some typos, the classes live in the wrong degrees.
\end{remark}

\begin{proposition}
\label{prop:compatres}
With the notation from Definitions~\ref{def:rngln} and \ref{def:rnglnres}, we have 
\[
\Phi_n(\mathscr{I}_n)\subseteq \mathscr{I}_{n-1}.
\]
In particular, the map $\Phi_n$ descends to a well-defined ring homomorphism 
\[
\overline{\Phi}_n\colon \mathscr{R}_n/\mathscr{I}_n\to \mathscr{R}_{n-1}/\mathscr{I}_{n-1}.
\]
\end{proposition}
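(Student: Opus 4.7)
The plan is to verify directly, family by family, that each of the three types of defining relations of $\mathscr{I}_n$ listed in Definition~\ref{def:rngln} is carried into $\mathscr{I}_{n-1}$ by $\Phi_n$. Since $\Phi_n$ is prescribed as a $\op{W}(F)$-algebra homomorphism on the underlying polynomial algebra, it suffices to check this on each generating relation of $\mathscr{I}_n$.

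The $\op{I}(F)$-torsion relations (1) are immediate: for any $B_J$ (resp.\ $T_J$) in $\mathscr{R}_n$, its image under $\Phi_n$ is either itself as a generator of $\mathscr{R}_{n-1}$, or a product of such a generator with $X_{n-1}$; in either case multiplication by $\op{I}(F)$ lands in $\mathscr{I}_{n-1}$ by the first family of relations at level $n-1$, and $T_\emptyset$ is fixed by $\Phi_n$. For the Euler class relation (2) in the odd case $n = 2k+1$, one computes $\Phi_n(X_n) = 0$ and $\Phi_n(T_{\{k\}}) = B_\emptyset \cdot X_{n-1} = 0$ using the convention $B_\emptyset = 0$, so the difference maps to $0 \in \mathscr{I}_{n-1}$.

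The substantial part of the proof is the product relations (3). When $n = 2k$ is even, $\Phi_n$ acts as the identity on all $P$-, $B$-, $T$-generators that appear in $\mathscr{R}_n$ and sends $X_n$ to $0$, so each product relation maps to the literally identical product relation in $\mathscr{R}_{n-1}$. When $n = 2k+1$ is odd, the nontrivial substitutions $P_k \mapsto X_{n-1}^2$, $B_{J_1 \sqcup \{k\}} \mapsto T_{J_1} \cdot X_{n-1}$ and $T_{J_1 \sqcup \{k\}} \mapsto B_{J_1} \cdot X_{n-1}$ activate, and one splits each of the four product relation families into cases according to whether the maximal index $k$ appears in $J$, in $J'$, or in both. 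In each case one applies $\Phi_n$ to both sides, factors out the common power of $X_{n-1}$, and identifies the remaining identity as a consequence of one of the four product relations at level $n-1$. As a representative example, in the case $J = J_1 \sqcup \{k\}$, $k \notin J'$ of the $B \cdot B$ relation, the image of the left-hand side equals $X_{n-1} \cdot T_{J_1} \cdot B_{J'}$, while the image of the right-hand side factors as $X_{n-1}$ times $T_\emptyset \cdot P_{J_1 \cap J'} \cdot B_{\Delta(J_1, J')} + \sum_{j \in J_1} B_{\{j\}} \cdot P_{(J_1 \setminus \{j\}) \cap J'} \cdot T_{\Delta(J_1 \setminus \{j\}, J')}$, and the two agree by combining the $T \cdot B$ relation with the $B \cdot T$ relation in $\mathscr{R}_{n-1}$. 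The case $k \in J \cap J'$ uses $P_k \mapsto X_{n-1}^2$ to convert the $B \cdot B$ relation of $\mathscr{R}_n$ into the $T \cdot T$ relation of $\mathscr{R}_{n-1}$, and the remaining three product families are handled by symmetric arguments.

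The main obstacle is the combinatorial bookkeeping of how the symmetric differences $\Delta(J \setminus \{j\}, J')$ and intersections $(J \setminus \{j\}) \cap J'$ rearrange when $k$ is removed from or retained in the various subsets; the specific paired form of the four product relations (exchanging $B \cdot B \leftrightarrow T \cdot T$ and $B \cdot T \leftrightarrow T \cdot B$) is precisely what makes the substitutions induced by $\Phi_n$ close up correctly. Once these case-by-case identities are verified, the conclusion $\Phi_n(\mathscr{I}_n) \subseteq \mathscr{I}_{n-1}$ follows, and $\overline{\Phi}_n$ is well-defined.
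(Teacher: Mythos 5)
Your proposal is correct and follows essentially the same route as the paper's proof: check the $\op{I}(F)$-torsion and Euler-class relations directly, then for the product relations do a case split on whether the top index $(n-1)/2$ lies in $J$, $J'$, or both, apply $\Phi_n$, factor out the power of $X_{n-1}$, and recognize the result as a combination of the level-$(n-1)$ product relations (e.g.\ your worked case matches the paper's observation that the $B\cdot B$ relation restricts to the $T\cdot B$/$B\cdot T$ relations times $X_{n-1}$, and the $J\cap J'$ case to the $T\cdot T$ relation times $X_{n-1}^2$). The level of detail (one family worked out, the others declared analogous) is the same as in the paper.
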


\begin{proof}
We first deal with the relations of type (1). Recall that the map $\Phi_n$ is by definition $\op{W}(F)$-linear, in particular, it will send $\op{I}(F)$ to $\op{I}(F)$. Since $\Phi_n$ sends $B_J$ to either $B_J$ or $T_{J'}\cdot X_{n-1}$ (and similarly $T_J$ to either $T_J$ or $B_{J'}\cdot X_{n-1}$, with the special case $B_\emptyset=0$) it is clear the relations of type (1) are preserved. 

The relations of type (2) are also preserved since both $X_{2k+1}$ and $T_k$ are mapped to $0$ by $\Phi_n$. 

It remains to deal with relations of type (3). These relations are trivially preserved if neither $J$ nor $J'$ contains the highest possible index $j_l=(n-1)/2$. In this case, all the relevant $B_J$, $T_J$ and $P_J$ will exist both in $\mathscr{R}_n$ and $\mathscr{R}_{n-1}$, and the corresponding relation in $\mathscr{R}_n$ is just mapped to the same relation in $\mathscr{R}_{n-1}$. 

For relations of type (3.1), assume that $j_l\in J'$ and $j_l\not\in J$. On the left-hand side, $B_{J'}$ restricts to $T_{J'\setminus\{j_l\}}\cdot X_{n-1}$ and on the right-hand side, $B_{\Delta(J\setminus\{k\},J')}$ restricts to $B_{\Delta(J\setminus\{k\},J'\setminus\{j_l\})}\cdot X_{n-1}$. The result is the product of a relation of type (3.2) with $X_{n-1}$. Conversely, if $j_l\in J$ and $j_l\not\in J'$, then the left-hand side restricts to $T_{J\setminus\{j_l\}}\cdot X_{n-1}\cdot B_{J'}$. The right-hand side restricts to 
\[
\sum_{k\in J\setminus\{ j_l\}} B_{\{k\}}\cdot P_{(J\setminus \{k\})\cap J'} \cdot T_{\Delta(J\setminus\{k,j_l\},J')}\cdot X_{n-1} + T_\emptyset \cdot X_{n-1}\cdot P_{(J\setminus \{j_l\})\cap J'}\cdot B_{\Delta(J\setminus\{j_l\},J')}.
\]
But this is the product of a relation of type (3.3) and $X_{n-1}$. Finally, the case where $j_l\in J\cap J'$, the left-hand side restricts to $T_{J\setminus\{j_l\}}\cdot T_{J'\setminus\{j_l\}}\cdot X_{n-1}^2$. The right-hand side restricts to 
\[
\sum_{k\in J\setminus\{ j_l\}} B_{\{k\}}\cdot P_{(J\setminus \{k,j_l\})\cap J'}\cdot X_{n-1}^2 \cdot B_{\Delta(J\setminus\{k\},J')} + T_\emptyset\cdot P_{(J\setminus \{j_l\})\cap J'}\cdot T_{\Delta(J,J')}\cdot X_{n-1}^2.
\]
This is a product of a relation of type (3.4) with $X_{n-1}^2$. The argument for restriction of relations of type (3.2) is completely analogous. 

For the restriction of relations of type (3.4), if $j_l\in J$ and $j_l\not\in J'$, the left-hand side restricts to $B_J\cdot T_{J'}\cdot X_{n-1}$. The right-hand side restricts to $T_J\cdot B_{J'}\cdot X_{n-1}+T_\emptyset\cdot P_{J\cap J'}\cdot B_{\Delta(J\setminus \{j_l\},J')}\cdot X_{n-1}$. This is the product of a relation of type (3.3) with $X_{n-1}$, noting that all terms here are 2-torsion. All the other cases are done similarly.

Since $\Phi_n(\mathscr{I}_n)\subset \mathscr{I}_{n-1}$, it follows that the restriction map descends to a $\op{W}(F)$-algebra map $\overline{\Phi}_n\colon \mathscr{R}_n/\mathscr{I}_n\to \mathscr{R}_{n-1}/\mathscr{I}_{n-1}$ as claimed. 
\end{proof}

\begin{lemma}
\label{lem:evenpoly}
If $n$ is even, then we have an isomorphism 
\[
\mathscr{R}_n/\mathscr{I}_n\cong\mathscr{R}_{n-1}/\mathscr{I}_{n-1}[X_n].
\] 
In particular, the restriction map $\overline{\Phi}_n\colon \mathscr{R}_n/\mathscr{I}_n\to\mathscr{R}_{n-1}/\mathscr{I}_{n-1}$ is surjective. 
\end{lemma}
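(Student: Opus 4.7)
The plan is to compare generators and relations of $\mathscr{R}_n/\mathscr{I}_n$ and $\mathscr{R}_{n-1}/\mathscr{I}_{n-1}$ directly. Write $n=2m$, so that $n-1=2m-1=2(m-1)+1$ is odd and $[(n-1)/2]=[(n-2)/2]=m-1$. This means the admissible index sets $J\subseteq\{1,\dots,m-1\}$ coincide for $\mathscr{R}_n$ and $\mathscr{R}_{n-1}$, and the Pontryagin generators $P_1,\dots,P_{m-1}$ are the same in both. The only difference at the level of generators is the Euler class: $\mathscr{R}_n$ contains $X_n$, while $\mathscr{R}_{n-1}$ contains $X_{n-1}$.

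Next I would analyze the defining ideals. Let $R=\op{W}(F)[P_1,\dots,P_{m-1},\,B_J,\,T_J,\,T_\emptyset]$ (no Euler class generator), so that $\mathscr{R}_n=R[X_n]$ and $\mathscr{R}_{n-1}=R[X_{n-1}]$. Since $n$ is even, $(n-1)/2$ is not an integer, so in Definition~\ref{def:rngln} the case $i=(n-1)/2$ never arises and no $P_i$ is set equal to $X_n^2$; the relations of type (1) and (3) only involve elements of $R$ (not $X_n$ or $X_{n-1}$), and no relation of type (2) appears in $\mathscr{R}_n$. On the $\mathscr{R}_{n-1}$ side, the only relation involving $X_{n-1}$ is the type (2) relation $X_{2m-1}=T_{\{m-1\}}$. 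Hence if $\mathscr{J}\subseteq R$ denotes the ideal generated by the type (1) and (3) relations, we obtain
\[
\mathscr{R}_n/\mathscr{I}_n\;=\;(R/\mathscr{J})[X_n],\qquad \mathscr{R}_{n-1}/\mathscr{I}_{n-1}\;=\;(R/\mathscr{J})[X_{n-1}]/(X_{n-1}-T_{\{m-1\}})\;\cong\;R/\mathscr{J},
\]
where the last isomorphism eliminates $X_{n-1}$. Combining the two displays yields the asserted isomorphism $\mathscr{R}_n/\mathscr{I}_n\cong(\mathscr{R}_{n-1}/\mathscr{I}_{n-1})[X_n]$.

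For the surjectivity of $\overline{\Phi}_n$, I would observe from Definition~\ref{def:rnglnres} that for $n$ even every $P_i$, $B_J$, $T_J$, $T_\emptyset$ maps to itself (the exceptional cases $i=(n-1)/2$ and $j_l=(n-1)/2$ do not occur), while $X_n\mapsto 0$. The image therefore contains $R/\mathscr{J}$; and since $X_{n-1}=T_{\{m-1\}}$ in $\mathscr{R}_{n-1}/\mathscr{I}_{n-1}$ by relation (2), the class $X_{n-1}$ is also hit. All generators of $\mathscr{R}_{n-1}/\mathscr{I}_{n-1}$ are thus in the image, so $\overline{\Phi}_n$ is surjective.

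There is no real obstacle here — the argument is essentially bookkeeping, with the only mildly subtle point being that one must verify the Euler class generator $X_{n-1}$ of the target truly becomes redundant modulo $\mathscr{I}_{n-1}$, which is exactly what relation (2) provides. The main thing to be careful about is the indexing conventions, in particular that $[(n-1)/2]=[(n-2)/2]$ when $n$ is even, which ensures that the two rings share the same non-Euler generators.
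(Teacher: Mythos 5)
Your argument is correct and is essentially the paper's own proof: both rest on the observation that for even $n$ the non-Euler generators and the type (1), (3) relations coincide for $\mathscr{R}_n$ and $\mathscr{R}_{n-1}$, that $X_n$ appears in no relation of $\mathscr{I}_n$, and that the type (2) relation makes $X_{n-1}=T_{\{(n-2)/2\}}$ redundant in $\mathscr{R}_{n-1}/\mathscr{I}_{n-1}$, giving surjectivity of $\overline{\Phi}_n$. Your version just makes the bookkeeping explicit via the common subring $R$ and ideal $\mathscr{J}$, which is a fine elaboration rather than a different route.
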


\begin{proof}
The index sets for the elements $P_i$ are the same for $n$ and $n-1$. In particular, $i\neq (n-1)/2$ which means that the $P_i$ in $\mathscr{R}_n$ are just mapped to the $P_i$ in $\mathscr{R}_{n-1}$. The same is true for the index sets for $B_J$ and $T_J$. Moreover, in $\mathscr{R}_{n-1}/\mathscr{I}_{n-1}$ we have $X_{n-1}=T_{\{(n-2)/2\}}$. This proves the surjectivity of $\Phi_n$. The claim about the polynomial ring follows since $X_n$ doesn't appear in any relation in $\mathscr{R}_n$. 
\end{proof}

\begin{lemma}
\label{lem:oddproblem}
If $n$ is odd, then there is an exact sequence of graded $\op{W}(F)$-algebras:
\[
\mathscr{R}_n/\mathscr{I}_n\xrightarrow{\overline{\Phi_n}} \mathscr{R}_{n-1}/\mathscr{I}_{n-1} \to \op{W}(F)[X_{n-1}]/(X_{n-1}^2)\to 0.
\] 
\end{lemma}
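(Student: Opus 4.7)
The plan is to exhibit a surjective $\op{W}(F)$-algebra map
\[
\psi\colon \mathscr{R}_{n-1}/\mathscr{I}_{n-1} \to \op{W}(F)[X_{n-1}]/(X_{n-1}^2)
\]
and to verify that its kernel is precisely the ideal of $\mathscr{R}_{n-1}/\mathscr{I}_{n-1}$ generated by the image of $\overline{\Phi}_n$ on the positive-degree part of $\mathscr{R}_n/\mathscr{I}_n$. Write $n=2k+1$ so that $n-1=2k$; then the generators of $\mathscr{R}_{n-1}$ are the Pontryagin classes $P_1,\dots,P_{k-1}$, the class $X_{n-1}$ in bidegree $(2k,1)$, together with the Bockstein classes $B_J,T_J$ for index sets $J\subseteq\{1,\dots,k-1\}$ and the extra class $T_\emptyset$.

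Define $\psi$ as the $\op{W}(F)$-algebra homomorphism sending $X_{n-1}\mapsto X_{n-1}$ and all other generators $P_i$, $B_J$, $T_J$, $T_\emptyset$ to $0$. Well-definedness reduces to checking that the relations (1) and (3) of $\mathscr{I}_{n-1}$ are sent to zero: the relations of type (1) become $\op{I}(F)\cdot 0=0$, while every term on both sides of a relation of type (3) contains at least one factor among $B_J$, $T_J$, or $T_\emptyset$, so both sides vanish. Note also that no relation of $\mathscr{I}_{n-1}$ involves $X_{n-1}$, so imposing $X_{n-1}^2=0$ in the target is not obstructed. Surjectivity of $\psi$ is immediate since $1$ and $X_{n-1}$ generate the target.

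It remains to match $\ker(\psi)$ with the ideal $I$ generated by $\overline{\Phi}_n$ applied to the positive-degree generators of $\mathscr{R}_n/\mathscr{I}_n$. The inclusion $I\subseteq\ker(\psi)$ is clear since $\psi\circ\overline{\Phi}_n$ annihilates every positive-degree generator of $\mathscr{R}_n/\mathscr{I}_n$. For the reverse inclusion, observe that $\ker(\psi)$ is generated as an ideal by $P_i$ ($1\leq i\leq k-1$), $X_{n-1}^2$, the classes $B_J$, $T_J$ for $\emptyset\neq J\subseteq\{1,\dots,k-1\}$, and $T_\emptyset$. By Definition~\ref{def:rnglnres}, extended by $\overline{\Phi}_n(T_\emptyset)=T_\emptyset$, each of these lies in the image of $\overline{\Phi}_n$: the classes $P_i$, $B_J$, $T_J$ with $\max J<k$, and $T_\emptyset$ come from themselves in $\mathscr{R}_n$, while $X_{n-1}^2$ arises as the image of the top Pontryagin class $P_k\in\mathscr{R}_n$, using $[(n-1)/2]=k$ in $\mathscr{R}_n$ when $n=2k+1$.

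The only step requiring any care is the bookkeeping; the structural content is that the unique positive-degree generator of $\mathscr{R}_{n-1}/\mathscr{I}_{n-1}$ that is not in the image of $\overline{\Phi}_n$ is $X_{n-1}$ itself, and the relation $X_{n-1}^2=0$ in the cokernel reflects the fact that $\overline{\Phi}_n(P_k)=X_{n-1}^2$. Geometrically this mirrors the classical identity that, for a vector bundle of even rank $2k$, the top Pontryagin class coincides with the square of the Euler class, so after restriction along $\iota\colon {\op{B}}\op{GL}_{n-1}\to{\op{B}}\op{GL}_n$ no further generators are needed besides $X_{n-1}$ modulo $X_{n-1}^2$.
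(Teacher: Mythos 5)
Your proof is correct and follows essentially the same route as the paper: both define the quotient map to $\op{W}(F)[X_{n-1}]/(X_{n-1}^2)$ by sending $X_{n-1}$ to itself and all other generators to zero, and then identify its kernel with the ideal generated by the image of $\overline{\Phi}_n$ on positive-degree elements, using that $P_{(n-1)/2}\mapsto X_{n-1}^2$ while $X_{n-1}$ itself is a polynomial variable not hit by $\overline{\Phi}_n$. Your write-up just makes the kernel bookkeeping (and the convention $\overline{\Phi}_n(T_\emptyset)=T_\emptyset$) more explicit than the paper does.
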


\begin{proof}
The elements $P_i\in\mathscr{R}_n$ with $i<(n-1)/2$ are mapped under ${\Phi_n}$ to the elements with the same name in $\mathscr{R}_{n-1}$. The same holds for the elements $B_J$ and $T_J$ where the index set $J$ doesn't contain $(n-1)/2$. In particular, the subalgebra of $\mathscr{R}_{n-1}/\mathscr{I}_{n-1}$ generated by all $P_i$, $B_J$ and $T_J$ is in the image. The only elements in $\mathscr{R}_n$ we have not yet considered so far are the new $P_{(n-1)/2}$ and the elements $B_J$ and $T_J$ where $J$ contains $(n-1)/2$.  The element $X_{n-1}^2$ is in the image of $P_{(n-1)/2}$, the elements $B_J'X_{n-1}$ are in the image of $T_J$ and the elements $T_J'X_{n-1}$ are in the image of $B_J$. However, the element $X_{n-1}$ itself is not in the image since we noted in Lemma~\ref{lem:evenpoly} that it is a polynomial variable in $\mathscr{R}_{n-1}$. Consequently, defining the morphism $\mathscr{R}_{n-1}/\mathscr{I}_{n-1}\to \op{W}(F)[X_{n-1}]/(X_{n-1}^2)$ by sending $X_{n-1}$ to itself and all the other generators to $0$ yields the desired exact sequence.
\end{proof}

\subsection{Statement of results}

Now we are ready to state the main theorem describing the $\mathbf{I}^\bullet$-cohomology and Chow--Witt ring of ${\op{B}}\op{GL}_n$. For the $\mathbf{I}^\bullet$-cohomology, the result is very close to \v Cadek's computation of the integral cohomology of ${\op{B}}\op{O}(n)$ with twisted coefficients, cf. \cite{cadek}. 

\begin{theorem}
\label{thm:glnin}
Let $n\geq 1$ be a natural number. 
\begin{enumerate}
\item The following ring homomorphism
\begin{eqnarray*}
\theta_n\colon \mathscr{R}_n&\to&  \op{H}^\bullet({\op{B}}\op{GL}_n,\mathbf{I}^\bullet\oplus\mathbf{I}^\bullet(\det\gamma_n^\vee))\colon \\ P_i&\mapsto&\op{p}_{2i} \\X_n&\mapsto& \op{e}_n \\
B_J&\mapsto& \beta_{\mathscr{O}}(\overline{\op{c}}_{2j_1}\cdots\overline{\op{c}}_{2j_l}) \textrm{ for } J=\{j_1,\dots,j_l\} \\ T_J&\mapsto& \beta_{\det\gamma_n^\vee}(\overline{\op{c}}_{2j_1}\cdots\overline{\op{c}}_{2j_l}) \textrm{ for } J=\{j_1,\dots,j_l\}\\ T_\emptyset&\mapsto& \beta_{\det\gamma_n^\vee}(1)
\end{eqnarray*}
induces a ring isomorphism $\overline{\theta}_n\colon \mathscr{R}_n/\mathscr{I}_n \xrightarrow{\cong}\op{H}_{\op{Nis}}^\bullet({\op{B}}\op{GL}_n,\mathbf{I}^\bullet\oplus\mathbf{I}^\bullet(\det\gamma_n^\vee))$. 
\item 
For any line bundle $\mathscr{L}$ on ${\op{B}}\op{GL}_n$, the reduction morphism 
\[
\op{H}^{\bullet}({\op{B}}\op{GL}_n,\mathbf{I}^\bullet(\mathscr{L}))\to \op{Ch}^\bullet({\op{B}}\op{GL}_n)
\]
induced from the projection $\mathbf{I}^n(\mathscr{L}) \mapsto\mathbf{K}^{\op{M}}_n/2$ is explicitly given by mapping
\[
\op{p}_{2i}\mapsto \overline{\op{c}}_{2i}^2, \; \beta_{\mathscr{L}}(\overline{\op{c}}_{2j_1}\cdots\overline{\op{c}}_{2j_l})\mapsto \op{Sq}^2_{\mathscr{L}}(\overline{\op{c}}_{2j_1}\cdots\overline{\op{c}}_{2j_l}),\; \op{e}_n\mapsto \overline{\op{c}}_n. 
\]
\item Any class $x$ in the ideal of $\op{H}^\bullet({\op{B}}\op{GL}_n,\mathbf{I}^\bullet\oplus \mathbf{I}^\bullet(\det\gamma_n^\vee))$ generated by $\beta_J$ and $\tau_J$ is trivial if and only if its reduction $\rho(x)\in\op{Ch}^\bullet({\op{B}}\op{GL}_n)$ is trivial. 
\end{enumerate}
\end{theorem}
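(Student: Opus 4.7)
The plan is to establish all three parts simultaneously by induction on $n$, using the localization sequence of Proposition~\ref{prop:locgln} in tandem with the B\"ar-sequence decomposition of Lemma~\ref{lem:wsplit}. The base case is handled by a direct computation (for $n=1$, ${\op{B}}\op{GL}_1$ reduces to ${\op{B}}\mathbb{G}_m$, where the cohomology is essentially trivial in the relevant sense).

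For the inductive step, I would decompose $\op{H}^\bullet({\op{B}}\op{GL}_n,\mathbf{I}^\bullet(\mathscr{L}))$ into $\mathbf{W}$-cohomology and $\op{Im}\beta_{\mathscr{L}}$ via Lemma~\ref{lem:wsplit}, and compute the two pieces separately. The $\mathbf{W}$-cohomology part I would handle directly using the clean localization sequence in Proposition~\ref{prop:locgln}, mirroring the classical computation of rational cohomology of real Grassmannians in \cite{sadykov,milnor:stasheff}; the outcome should be a free $\op{W}(F)$-algebra on the Pontryagin classes $\op{p}_{2i}$ together with the Euler class $\op{e}_n$ when relevant, with the parity distinction controlled by Lemmas~\ref{lem:evenpoly} and \ref{lem:oddproblem}. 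The freeness is crucial since it activates the splitting in Lemma~\ref{lem:wsplit} and yields part (3) of the theorem as an immediate consequence. The $\op{Im}\beta_{\mathscr{L}}$ part is then identified, again via Lemma~\ref{lem:wsplit}, with its image under $\rho$ in $\op{Ch}^\bullet({\op{B}}\op{GL}_n)$, which was explicitly described in Corollary~\ref{cor:wuimage}; matching generators against $B_J$, $T_J$, $T_\emptyset$ in the candidate presentation $\mathscr{R}_n/\mathscr{I}_n$ completes the additive identification.

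The well-definedness of $\theta_n$ requires verifying that all relations in $\mathscr{I}_n$ hold in cohomology. The torsion relations of type (1) are immediate from Lemma~\ref{lem:baertorsion}, and the odd-degree relation (2) is Proposition~\ref{prop:eulerrel}. For the multiplicative relations of type (3), since every class involved is $\op{I}(F)$-torsion and (by the freeness of $\mathbf{W}$-cohomology) lies in $\op{Im}\beta$ where $\rho$ is injective, it suffices to verify these relations after reduction modulo $2$. There they become identities involving $\op{Sq}^2_{\mathscr{L}}$ applied to products of Stiefel--Whitney classes, which reduce via the Wu formula of Proposition~\ref{prop:wuformula} to a combinatorial identity in $\op{Ch}^\bullet({\op{B}}\op{GL}_n)$ paralleling \v Cadek's arguments in \cite{cadek}. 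The reduction formulas in part (2) are then automatic: $\op{p}_{2i}\mapsto \overline{\op{c}}_{2i}^2$ follows from the symplectification definition of Pontryagin classes (\cite[Section~5]{chow-witt}), $\op{e}_n\mapsto \overline{\op{c}}_n$ follows from the compatibility of Euler classes across change-of-coefficient maps, and $\beta_{\mathscr{L}}(x)\mapsto \op{Sq}^2_{\mathscr{L}}(x)$ is Proposition~\ref{prop:twistedsq2}.

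The main obstacle I anticipate is handling the parity of $n$ carefully in the inductive step. For even $n$, the Euler class extends the previous algebra as a polynomial variable (Lemma~\ref{lem:evenpoly}), while for odd $n$ the Euler class sits in the $\op{I}(F)$-torsion part as $\tau_{\{k\}}$ and the relevant map has a cokernel governed by an $X_{n-1}$-summand (Lemma~\ref{lem:oddproblem}); reconciling these two behaviours with the Euler-class multiplication map and the splitting into $\op{Im}\beta_{\mathscr{L}}$ and $\mathbf{W}$-cohomology requires care to avoid double-counting generators in the odd case. A secondary challenge is establishing freeness of $\mathbf{W}$-cohomology over $\op{W}(F)$ at each inductive step, since this freeness underlies the entire strategy via Lemma~\ref{lem:wsplit}; I expect this to follow from a direct analysis of the $\mathbf{W}$-cohomology localization sequence showing that the connecting homomorphism $\partial$ vanishes in the relevant bidegrees, i.e., that the sequence splits into short exact sequences of free $\op{W}(F)$-modules.
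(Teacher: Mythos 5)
Your proposal follows essentially the same route as the paper's proof: the $\op{Im}\beta$--$\mathbf{W}$ decomposition of Lemma~\ref{lem:wsplit} activated by freeness of $\mathbf{W}$-cohomology (which the paper isolates as Proposition~\ref{prop:glnw}, proved inductively via the localization sequence with base case $\mathbb{P}^\infty$), part (3) as an immediate consequence of the splitting, part (2) from Proposition~\ref{prop:twistedsq2} and the definitions of the Pontryagin and Euler classes, well-definedness of $\theta_n$ by checking the torsion relations after reduction mod $2$ (Lemma~\ref{lem:baertorsion}, Proposition~\ref{prop:eulerrel}, Proposition~\ref{prop:type3mod2}), surjectivity onto $\op{Im}\beta_{\mathscr{L}}$ via Corollary~\ref{cor:wuimage}, and \v Cadek-style arguments for the remaining combinatorics. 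The only cosmetic difference is that you package everything as one induction on $n$ while the paper confines the induction to the $\mathbf{W}$-cohomology computation; just note that the injectivity of $\overline{\theta}_n$ on the torsion ideal (that $\mathscr{I}_n$ accounts for \emph{all} relations among the $\op{Sq}^2_{\mathscr{L}}$-images in $\op{Ch}^\bullet$) still needs the explicit \v Cadek argument, which your ``matching generators'' phrase leaves implicit.
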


The proof will be given in Section~\ref{sec:inproofs}. For now we draw some consequences concerning the structure of the Chow--Witt ring of ${\op{B}}\op{GL}_n$. 

\begin{proposition}
\label{prop:kerpartial}
\begin{enumerate}
\item 
The kernel of the composition 
\[
\partial_{\mathscr{O}}\colon \op{CH}^\bullet({\op{B}}\op{GL}_n)\to \op{Ch}^\bullet({\op{B}}\op{GL}_n)\xrightarrow{\beta_{\mathscr{O}}} \op{H}^\bullet({\op{B}}\op{GL}_n,\mathbf{I}^\bullet)
\]
 is the subring 
\begin{eqnarray*}
\ker\partial_{\mathscr{O}}&=&\mathbb{Z}[\op{c}_i^2, 2{\op{c}}_{i_1}\dots{\op{c}}_{i_k}, {\op{c}_1}{\op{c}_{2i}}+{\op{c}_{2i+1}},{\op{c}_1}{\op{c}_n}]\\ &\subseteq&\mathbb{Z}[\op{c}_1,\dots,\op{c}_n]\cong \op{CH}^\bullet({\op{B}}\op{GL}_n).
\end{eqnarray*}
\item 
The kernel of the composition 
\[
\partial_{\det\gamma_n^\vee}\colon \op{CH}^\bullet({\op{B}}\op{GL}_n)\to \op{Ch}^\bullet({\op{B}}\op{GL}_n)\xrightarrow{\beta_{\det\gamma_n^\vee}} \op{H}^\bullet({\op{B}}\op{GL}_n,\mathbf{I}^\bullet(\det\gamma_n^\vee))
\]
is the subring 
\begin{eqnarray*}
\ker\partial_{\det\gamma_n^\vee}&=&\mathbb{Z}[\op{c}_{2i+1}, 2{\op{c}}_{2i_1}\cdots {\op{c}}_{2i_k}, {\op{c}}_{2i}^2, \op{c}_n]\\&\subseteq&\mathbb{Z}[\op{c}_1,\dots,\op{c}_n]\cong \op{CH}^\bullet({\op{B}}\op{GL}_n).
\end{eqnarray*}
\end{enumerate}
\end{proposition}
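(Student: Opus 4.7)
The proof plan is a pullback computation. The integral Bockstein $\partial_{\mathscr{L}}$ factors by definition as the mod~$2$ reduction $\pi\colon\op{CH}^\bullet({\op{B}}\op{GL}_n)\to\op{Ch}^\bullet({\op{B}}\op{GL}_n)$ followed by the twisted Bockstein $\beta_{\mathscr{L}}$, so $\ker\partial_{\mathscr{L}}=\pi^{-1}(\ker\beta_{\mathscr{L}})$. The problem therefore splits into identifying $\ker\beta_{\mathscr{L}}$ inside $\op{Ch}^\bullet({\op{B}}\op{GL}_n)$ and pulling this kernel back along $\pi$.

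For the first step, Theorem~\ref{thm:glnin}(3) is the decisive input: the reduction morphism $\rho$ is injective on the ideal generated by the Bockstein classes, and this ideal contains the image of $\beta_{\mathscr{L}}$ for every admissible duality $\mathscr{L}$. Combined with the identity $\rho\circ\beta_{\mathscr{L}}=\op{Sq}^2_{\mathscr{L}}$ from Proposition~\ref{prop:twistedsq2}, this gives
\[
\ker\beta_{\mathscr{L}}=\ker\op{Sq}^2_{\mathscr{L}}\subset\op{Ch}^\bullet({\op{B}}\op{GL}_n),
\]
and Corollary~\ref{cor:wukernel} identifies these two kernels explicitly as the polynomial subrings $\mathbb{Z}/2\mathbb{Z}[\overline{\op{c}}_i^2,\overline{\op{c}}_1\overline{\op{c}}_{2i}+\overline{\op{c}}_{2i+1},\overline{\op{c}}_1\overline{\op{c}}_n]$ in the untwisted case and $\mathbb{Z}/2\mathbb{Z}[\overline{\op{c}}_{2i+1},\overline{\op{c}}_{2i}^2,\overline{\op{c}}_n]$ in the $\det\gamma_n^\vee$-twisted case.

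The pullback step is then essentially formal: for any subring $S\subset\mathbb{Z}/2\mathbb{Z}[\overline{\op{c}}_1,\ldots,\overline{\op{c}}_n]$, the preimage $\pi^{-1}(S)$ is generated as a subring of $\mathbb{Z}[\op{c}_1,\ldots,\op{c}_n]$ by any integral lifts of the generators of $S$ together with the kernel ideal $\ker\pi=2\mathbb{Z}[\op{c}_1,\ldots,\op{c}_n]$, which is itself additively spanned by the $2$-multiples $2\op{c}_{i_1}\cdots\op{c}_{i_k}$ of monomials. Choosing the evident integral lifts $\op{c}_i^2$, $\op{c}_1\op{c}_{2i}+\op{c}_{2i+1}$, $\op{c}_1\op{c}_n$ in case~(1), and $\op{c}_{2i+1}$, $\op{c}_{2i}^2$, $\op{c}_n$ in case~(2), reproduces the presentations stated in the proposition. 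The only nontrivial bookkeeping step, and the main place where care is needed, is in case~(2): the listed $2$-multiples $2\op{c}_{2i_1}\cdots\op{c}_{2i_k}$ range only over monomials in \emph{even} Chern classes, so one must verify that every $2$-multiple of a monomial involving odd factors already lies in the generated subring. This follows because each odd Chern class $\op{c}_{2j+1}$ is itself a listed generator and can be factored out, reducing the question to a $2$-multiple of a purely even monomial, which is included by hypothesis.
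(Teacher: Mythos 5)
Your proposal is correct and follows essentially the same route as the paper: identify $\ker\beta_{\mathscr{L}}$ with $\ker\op{Sq}^2_{\mathscr{L}}$ via Theorem~\ref{thm:glnin}(3) (equivalently, injectivity of $\rho$ on $\op{Im}\beta_{\mathscr{L}}$ from Proposition~\ref{prop:glnw} and Lemma~\ref{lem:wsplit}) together with Proposition~\ref{prop:twistedsq2}, then apply Corollary~\ref{cor:wukernel} and pull back along reduction mod~$2$. The only difference is that you spell out the preimage bookkeeping (including the factoring of $2$-multiples of monomials with odd factors in case~(2)), which the paper leaves implicit in ``the integral statements follow directly from the mod 2 statements.''
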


\begin{proof}
The integral statements follow directly from the mod 2 statements: by (3) of Theorem~\ref{thm:glnin} and Proposition~\ref{prop:twistedsq2}, the kernel of $\beta_{\mathscr{L}}$ equals the kernel of $\op{Sq}^2_{\mathscr{L}}$ and the latter is determined by the Wu formula, cf. Corollary~\ref{cor:wukernel}. 
\end{proof}

\begin{theorem}
\label{thm:glnchw}
There is a cartesian square of $\mathbb{Z}\oplus\mathbb{Z}/2\mathbb{Z}$-graded $\op{GW}(F)$-algebras
\[
\xymatrix{
\widetilde{\op{CH}}^\bullet({\op{B}}\op{GL}_n,\mathscr{O}\oplus\det\gamma_n^\vee)\ar[r] \ar[d] & \ker\partial_{\mathscr{O}}\oplus \ker\partial_{\det\gamma_n^\vee} \ar[d]^{\bmod 2} \\
\op{H}^{\bullet}({\op{B}}\op{GL}_n,\mathbf{I}^\bullet\oplus\mathbf{I}^\bullet(\det\gamma_n^\vee))\ar[r]_>>>>>>\rho &\op{Ch}^\bullet({\op{B}}\op{GL}_n)^{\oplus 2}.
}
\]
The right vertical morphism is the natural reduction mod $2$ restricted to the kernels of the two boundary maps, and the lower horizontal morphism is the reduction morphism described in Theorem~\ref{thm:glnin}. The Chow--Witt-theoretic Euler class satisfies $\op{e}_n=(\op{e}_n,\op{c}_n)$ with $\op{c}_n\in\ker\partial_{\det\gamma_n^\vee}$. For the Chow--Witt-theoretic Pontryagin classes, we have
\[
\op{p}_{i}=\left(\op{p}_{i}, (-1)^i\op{c}_{i}^2+2\sum_{j=\max\{0,2i-n\}}^{i-1}(-1)^j\op{c}_j\op{c}_{2i-j}\right)
\]
where the odd Pontryagin classes in $\mathbf{I}$-cohomology are $\op{I}(F)$-torsion and satisfy $\op{p}_{2i+1}=\beta_{\mathscr{O}}(\overline{\op{c}}_{2i}\overline{\op{c}}_{2i+1})$. 

The top Pontryagin class $\op{p}_n\in\widetilde{\op{CH}}^{2n}({\op{B}}\op{Sp}_{2n})$ maps to $\op{e}_n^2\in \widetilde{\op{CH}}^{2n}({\op{B}}\op{GL}_n,\mathscr{O})$. 
\end{theorem}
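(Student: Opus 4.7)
\begin{proofof}{Theorem~\ref{thm:glnchw} (plan)}
The plan is to deduce the theorem from the material already assembled in the paper: the general cartesian square statement from the twisted analogue of \cite[Proposition 2.11]{chow-witt}, the computation of $\ker\partial_{\mathscr{L}}$ in Proposition~\ref{prop:kerpartial}, the explicit description of the reduction morphism in Theorem~\ref{thm:glnin}(2), and the definitions of Euler and Pontryagin classes recalled in this section. Concretely, I would first observe that the Chow ring $\op{CH}^\bullet({\op{B}}\op{GL}_n)\cong\mathbb{Z}[\op{c}_1,\dots,\op{c}_n]$ is 2-torsion free, so the twisted version of \cite[Proposition 2.11]{chow-witt} implies that for each of the two relevant line bundles $\mathscr{L}\in\{\mathscr{O},\det\gamma_n^\vee\}$ the canonical map
\[
\widetilde{\op{CH}}^\bullet({\op{B}}\op{GL}_n,\mathscr{L})\to \op{H}^\bullet({\op{B}}\op{GL}_n,\mathbf{I}^\bullet(\mathscr{L}))\times_{\op{Ch}^\bullet({\op{B}}\op{GL}_n)}\ker\partial_{\mathscr{L}}
\]
is an isomorphism. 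Summing the two dualities yields the desired cartesian square, and the identification of the kernels of $\partial_{\mathscr{O}}$ and $\partial_{\det\gamma_n^\vee}$ with the explicit subrings of $\op{CH}^\bullet({\op{B}}\op{GL}_n)$ is Proposition~\ref{prop:kerpartial}.

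Next I would identify the Euler class as a pair under this square. By construction, $\op{e}_n\in \widetilde{\op{CH}}^n({\op{B}}\op{GL}_n,\det\gamma_n^\vee)$ maps to the Euler class in $\op{H}^n({\op{B}}\op{GL}_n,\mathbf{I}^n(\det\gamma_n^\vee))$ under change of coefficients, and to the top Chern class $\op{c}_n$ in $\op{CH}^n({\op{B}}\op{GL}_n)$ by the compatibility recalled in Proposition~\ref{prop:chern}. The class $\op{c}_n$ lies in $\ker\partial_{\det\gamma_n^\vee}$ by the second part of Proposition~\ref{prop:kerpartial} (it appears among the listed generators), so the pair $(\op{e}_n,\op{c}_n)$ indeed lies in the fiber product.

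For the Pontryagin classes I would unwind the definition via the symplectification morphism $\sigma\colon {\op{B}}\op{GL}_n\to {\op{B}}\op{Sp}_{2n}$, which classifies the hyperbolic bundle $\gamma_n\oplus\gamma_n^\vee$. On the Chow side, the total Pontryagin class pulls back to the image of the top Chern classes of $\gamma_n\oplus\gamma_n^\vee$ in even degrees, and using the Whitney sum formula from Proposition~\ref{prop:chern} together with $\op{c}_j(\gamma_n^\vee)=(-1)^j\op{c}_j$ gives
\[
\op{c}_{2i}(\gamma_n\oplus\gamma_n^\vee)=\sum_{j+k=2i}(-1)^k\op{c}_j\op{c}_k=(-1)^i\op{c}_i^2+2\sum_{j=\max\{0,2i-n\}}^{i-1}(-1)^j\op{c}_j\op{c}_{2i-j},
\]
which is exactly the asserted Chow-component formula. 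On the $\mathbf{I}^\bullet$-cohomology side, the even Pontryagin classes $\op{p}_{2i}$ are, by Theorem~\ref{thm:glnin}, polynomial generators in the part with trivial duality; the odd Pontryagin classes $\op{p}_{2i+1}$ sit in odd total degree $2(2i+1)$, hence by the structural description of $\op{H}^\bullet({\op{B}}\op{GL}_n,\mathbf{I}^\bullet)$ must be $\op{I}(F)$-torsion and expressible as a sum of products of $\beta_J$'s. To single out which combination, I would compute $\rho(\op{p}_{2i+1})\in\op{Ch}^\bullet({\op{B}}\op{GL}_n)$ using the Chow-side formula reduced mod $2$, namely $\overline{\op{c}}_{2i+1}^2$, and match this against $\rho\circ\beta_{\mathscr{O}}(\overline{\op{c}}_{2i}\overline{\op{c}}_{2i+1})=\op{Sq}^2_{\mathscr{O}}(\overline{\op{c}}_{2i}\overline{\op{c}}_{2i+1})$, which is also $\overline{\op{c}}_{2i+1}^2$ by a direct application of the Wu formula of Proposition~\ref{prop:wuformula}; Theorem~\ref{thm:glnin}(3) then upgrades this to equality in $\mathbf{I}^\bullet$-cohomology.

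Finally, for the top Pontryagin class statement, the Chow component vanishes by the same identity with the sum empty (giving $(-1)^n\op{c}_n^2$, which under the sign conventions for the square of the twisted Euler class coincides with the Chow reduction of $\op{e}_n^2$), so by injectivity of the fiber-product map it suffices to check the equality in $\op{H}^{2n}({\op{B}}\op{GL}_n,\mathbf{I}^{2n})$. There, the symplectic top Pontryagin class coincides with the Euler class of the underlying rank-$2n$ symplectic bundle, and Euler class multiplicativity applied to $\gamma_n\oplus\gamma_n^\vee$ expresses this as $\op{e}(\gamma_n)\cdot\op{e}(\gamma_n^\vee)=\op{e}_n^2$ using that the Euler class is invariant under dualization up to the sign absorbed by the duality identification. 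I expect the main obstacle of the proof to be exactly this last bookkeeping of signs and twist identifications for $\op{e}_n^2$ versus $\op{p}_n$; once that is handled cleanly (using the discussion in \cite[Section 2.2]{chow-witt} or \cite{lax:similitude}), everything else is a direct consequence of results already in place.
\end{proofof}
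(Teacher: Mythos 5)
Your plan is correct in outline and, for most of the statement, it is the same argument the paper gives: the cartesian square comes from the twisted version of \cite[Proposition 2.11]{chow-witt} together with the $2$-torsion-freeness of $\op{CH}^\bullet({\op{B}}\op{GL}_n)$, the kernels are Proposition~\ref{prop:kerpartial}, the Euler class identification is Proposition~\ref{prop:chern}, and your identification of the odd Pontryagin classes by comparing mod~$2$ reductions (Wu formula) and then invoking the injectivity of $\rho$ on the torsion part via Theorem~\ref{thm:glnin}(3) is exactly the paper's Propositions~\ref{prop:pwhitneymod2} and~\ref{prop:pwhitney}; only note that your degree argument should say that the trivial-duality free part of the $\mathbf{I}$-cohomology is concentrated in degrees $\equiv 0 \bmod 4$ (the degree $4i+2$ of $\op{p}_{2i+1}$ is not odd), which is what forces $\op{p}_{2i+1}$ to be $\op{I}(F)$-torsion. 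You genuinely diverge in two places. For the Chow component of $\op{p}_i$ you re-derive the formula from the Whitney sum formula and $\op{c}_j(\gamma_n^\vee)=(-1)^j\op{c}_j$ (implicitly using that the symplectic Borel classes reduce to the even Chern classes of the underlying bundle), whereas the paper just cites \cite{chow-witt}. For the top Pontryagin class the paper does not use Euler-class multiplicativity for $\gamma_n\oplus\gamma_n^\vee$ at all: it proves the statement in $\mathbf{W}$-cohomology by pulling back along the orientation cover to ${\op{B}}\op{SL}$ and using injectivity of $o^\ast$ there (Proposition~\ref{prop:glnw}), together with \cite[Theorem 1.3, Proposition 7.9]{chow-witt}.

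The one substantive caution concerns exactly the ``sign bookkeeping'' you flag for that last step, which is not merely cosmetic. Under the canonical twist identification, $\op{e}(\gamma_n^\vee)$ differs from $\op{e}(\gamma_n)$ by a unit whose image in $\op{CH}^n$ is $(-1)^n$, so $\op{e}(\gamma_n)\cdot\op{e}(\gamma_n^\vee)$ agrees with $\op{e}_n^2$ only up to this unit. Concretely, for $n$ odd the Chow components do not literally coincide: the displayed formula gives $(-1)^n\op{c}_n^2=-\op{c}_n^2$ for $\op{p}_n$, while $\op{e}_n^2$ reduces to $\op{c}_n^2$, so your parenthetical claim that these ``coincide under the sign conventions'' needs either a restriction to even $n$ (the case in which both classes are non-torsion and in which Proposition~\ref{prop:glnw} is actually invoked; for odd $n$ both vanish in $\mathbf{W}$-cohomology and the identity in $\mathbf{I}$-cohomology follows from the mod~$2$ comparison) or a careful treatment of the unit along the lines of \cite{lax:similitude}, as you suggest. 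The paper's detour through $\mathbf{W}$-cohomology, where $\langle -1\rangle=-1$ and the comparison is done where $o^\ast$ is injective, is precisely what sidesteps this bookkeeping, so if you want your more geometric multiplicativity argument to stand on its own, that unit computation is the part you still have to supply.
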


\begin{proof}
The statement about the cartesian square follows directly from \cite[Proposition 2.11]{chow-witt}. The claims about the reduction from the Chow--Witt ring to $\mathbf{I}$-cohomology follows from the definition of the characteristic classes. The statement about $\op{e}_n$ and $\op{c}_n$ follows from Proposition~\ref{prop:chern}. The statement about the $\op{p}_i$ has been proved in \cite{chow-witt}. The statement about the top Pontryagin class is proved in Proposition~\ref{prop:glnw}, or \cite[Proposition 7.9]{chow-witt}. 
\end{proof}

\begin{example}
To clarify the relation between the cohomology of ${\op{B}}\op{GL}_n$ and ${\op{B}}\op{SL}_n$, cf. \cite[Example 6.12]{chow-witt}, we describe in detail the cartesian square for ${\op{B}}\op{GL}_3$ with both dualities. For the trivial duality, we have the following cartesian square: 
\[
\xymatrix{
\widetilde{\op{CH}}^\bullet({\op{B}}\op{GL}_3,\mathscr{O})\ar[r] \ar[d] & 2\mathbb{Z}[\op{c}_i^2,{\op{c}_1}{\op{c}_2}+{\op{c}_3},{\op{c}_1}{\op{c}_3}] \ar[d] \\
\op{W}(F)[\op{p}_2,\beta_{\mathscr{O}}(\overline{\op{c}}_1),\beta_{\mathscr{O}}(\overline{\op{c}}_2),\beta_{\mathscr{O}}(\overline{\op{c}}_1\overline{\op{c}}_2)]/\mathscr{I}_{3,\mathscr{O}} \ar[r] &\mathbb{Z}/2\mathbb{Z}[\overline{\op{c}}_1,\overline{\op{c}}_2,\overline{\op{c}}_3].
}
\]
For the nontrivial duality, we have the following square:
\[
\xymatrix{
\widetilde{\op{CH}}^\bullet({\op{B}}\op{GL}_3,\det\gamma_n^\vee)\ar[r] \ar[d] & \mathbb{Z}[\op{c}_1,2{\op{c}_2},\op{c}_2^2,\op{c}_3] \ar[d] \\
\op{W}(F)[\beta_{\det\gamma_n^\vee}(1),\beta_{\det\gamma_n^\vee}(\overline{\op{c}}_2)]/\mathscr{I}_{3,\det\gamma_n^\vee} \ar[r] &\mathbb{Z}/2\mathbb{Z}[\overline{\op{c}}_1,\overline{\op{c}}_2,\overline{\op{c}}_3].
}
\]
Note that the Euler class $\op{e}_3=\beta_{\det\gamma_n^\vee}(\overline{\op{c}}_2)$ lives in the cohomology with twisted coefficients. Note also that $\beta_{\mathscr{O}}(\overline{\op{c}}_1)=\beta_{\det\gamma_n^\vee}(1)^2$ and 
\[
\beta_{\mathscr{O}}(\overline{\op{c}}_3)=\beta_{\mathscr{O}}(\overline{\op{c}}_1\overline{\op{c}}_2)=\beta_{\det\gamma_n^\vee}(1)\beta_{\det\gamma_n^\vee}(\overline{\op{c}}_2).
\]
The remaining torsion relations are not completely spelled out for typesetting reasons.
\end{example}

\begin{proposition}
\label{prop:pwhitney}
The homomorphism $\widetilde{\op{CH}}^\bullet({\op{B}}\op{Sp}_{2n})\to\widetilde{\op{CH}}^\bullet({\op{B}}\op{GL}_n,\mathscr{O})$ induced from symplectification maps the odd Pontryagin classes as follows 
\[
\op{p}_{2i+1}\mapsto (\beta_{\mathscr{O}}(\overline{\op{c}}_{2i}))^2 + \op{p}_{2i}\beta_{\mathscr{O}}(\overline{\op{c}}_1)=\beta_{\mathscr{O}}(\overline{\op{c}}_{2i}\overline{\op{c}}_{2i+1}). 
\]

With this notation, the restriction  along the Whitney sum map ${\op{B}}(\op{GL}_n\times \op{GL}_m)\to {\op{B}}\op{GL}_{n+m}$ maps the Pontryagin classes as follows
\[
\op{p}_i\mapsto \sum_{j=\max\{0,i-m\}}^{\min\{i,n\}} \op{p}_j\otimes \op{p}_{i-j}
\]
where the sum is over the indices $j$ such that $\op{p}_j$ and $\op{p}_{i-j}$ are Pontryagin classes for $\op{GL}_n$ and $\op{GL}_m$, respectively.
\end{proposition}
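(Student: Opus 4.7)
The plan is to treat the two assertions separately, reducing both to verifications in the mod~$2$ Chow ring via the injectivity of the reduction morphism $\rho$ on $\op{Im}\beta_{\mathscr{O}}$ provided by Lemma~\ref{lem:wsplit}, together with the Wu formula (Proposition~\ref{prop:wuformula}) and the identification $\rho\circ\beta_{\mathscr{L}}=\op{Sq}^2_{\mathscr{L}}$ of Proposition~\ref{prop:twistedsq2}. The applicability of Lemma~\ref{lem:wsplit} requires freeness of $\op{H}^\bullet({\op{B}}\op{GL}_n,\mathbf{W})$ over $\op{W}(F)$, which is established in the inductive proof of Theorem~\ref{thm:glnin} in Section~\ref{sec:inproofs}, so the present proposition is best regarded as part of (or a corollary to) that structural theorem.

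For part~(1) I would first argue that the image of $\op{p}_{2i+1}$ in $\op{H}^\bullet({\op{B}}\op{GL}_n,\mathbf{I}^\bullet)$ is $\op{I}(F)$-torsion: the odd Pontryagin classes are already $\op{I}(F)$-torsion in the $\mathbf{I}$-cohomology of ${\op{B}}\op{Sp}_{2(2i+1)}$ by the computation in \cite{chow-witt}, and this property passes through the $\op{W}(F)$-linear symplectification pullback. By Lemma~\ref{lem:wsplit} the class $\op{p}_{2i+1}$ then lies in $\op{Im}\beta_{\mathscr{O}}$. The same holds for the two candidate expressions $\beta_{\mathscr{O}}(\overline{\op{c}}_{2i}\overline{\op{c}}_{2i+1})$ and $(\beta_{\mathscr{O}}(\overline{\op{c}}_{2i}))^2+\op{p}_{2i}\beta_{\mathscr{O}}(\overline{\op{c}}_1)$, since $\op{Im}\beta_{\mathscr{O}}$ is annihilated by $\op{I}(F)$ (from $\op{W}(F)$-linearity of $\beta_{\mathscr{O}}$, cf.~Lemma~\ref{lem:baertorsion}) and therefore forms an ideal in the total $\mathbf{I}$-cohomology ring. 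It then suffices to compare the three $\rho$-images. A routine Wu-formula calculation gives $\overline{\op{c}}_{2i+1}^2$ in each case: the derivation property yields $\op{Sq}^2_{\mathscr{O}}(\overline{\op{c}}_{2i}\overline{\op{c}}_{2i+1})=\overline{\op{c}}_{2i+1}^2$; expansion gives $(\overline{\op{c}}_1\overline{\op{c}}_{2i}+\overline{\op{c}}_{2i+1})^2+\overline{\op{c}}_1^2\overline{\op{c}}_{2i}^2=\overline{\op{c}}_{2i+1}^2$; and the Chow formula from Theorem~\ref{thm:glnchw} reduces $\op{p}_{2i+1}$ mod~$2$ to $\overline{\op{c}}_{2i+1}^2$. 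Injectivity of $\rho$ on $\op{Im}\beta_{\mathscr{O}}$ concludes.

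For part~(2) I would use the commutative square
\[
\xymatrix{
{\op{B}}(\op{GL}_n\times\op{GL}_m) \ar[r]\ar[d] & {\op{B}}\op{GL}_{n+m}\ar[d]\\
{\op{B}}(\op{Sp}_{2n}\times\op{Sp}_{2m}) \ar[r] & {\op{B}}\op{Sp}_{2(n+m)}
}
\]
with horizontal Whitney sum maps and vertical (products of) symplectifications; commutativity follows from the natural symplectic isomorphism $(V\oplus W)\oplus(V\oplus W)^\vee\cong(V\oplus V^\vee)\oplus(W\oplus W^\vee)$. Since the Chow--Witt-theoretic Pontryagin classes of vector bundles are defined by pullback from ${\op{B}}\op{Sp}_\bullet$, the Whitney sum formula arises by pulling back the symplectic Whitney sum formula $\op{p}_i\mapsto\sum_{j+k=i}\op{p}_j\boxtimes\op{p}_k$ along the left column. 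The symplectic formula itself rests on the splitting principle for symplectic bundles, under which the Pontryagin classes are the elementary symmetric polynomials in the squared Chern roots (cf.~\cite{chow-witt}); the index range in the stated formula reflects the vanishing $\op{p}_j=0$ on ${\op{B}}\op{GL}_n$ for $j>n$.

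The main obstacle is circular rather than computational: the injectivity argument via Lemma~\ref{lem:wsplit} depends on the freeness of $\mathbf{W}$-cohomology of ${\op{B}}\op{GL}_n$, which is itself only proved inductively together with Theorem~\ref{thm:glnin}; this proposition is therefore best read as a corollary of the structure theorem. An alternative derivation of the first identity in part~(1) is possible formally from the multiplicative relations (3) of Theorem~\ref{thm:glnin} after first identifying $\beta_{\mathscr{O}}(\overline{\op{c}}_1)=\tau_\emptyset^2$, but this still presupposes the structure theorem and is not simpler.
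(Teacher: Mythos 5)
Your overall strategy is the paper's: reduce part (1) to a comparison of mod~2 reductions using injectivity of $\rho$ on the image of $\beta_{\mathscr{O}}$ (Lemma~\ref{lem:wsplit} plus Proposition~\ref{prop:glnw}) together with the Wu-formula computations, and deduce part (2) from the symplectic Whitney sum formula and compatibility of Whitney sum with symplectification as in \cite{chow-witt}. However, there is a genuine gap in the step that places the symplectification image of $\op{p}_{2i+1}$ in $\op{Im}\beta_{\mathscr{O}}$. First, the claim that the odd Pontryagin classes are already $\op{I}(F)$-torsion in the $\mathbf{I}$-cohomology of ${\op{B}}\op{Sp}_{2(2i+1)}$ is false: by \cite{chow-witt} the Chow--Witt ring and $\mathbf{I}$-cohomology of ${\op{B}}\op{Sp}_{2m}$ are polynomial algebras on the Borel--Pontryagin classes over $\op{GW}(F)$ resp.\ $\op{W}(F)$, so there these classes are free, non-torsion generators; the torsion phenomenon appears only after restriction along symplectification to ${\op{B}}\op{GL}_n$ (or ${\op{B}}\op{SL}_n$). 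Second, even granting $\op{I}(F)$-torsionness, the inference ``$\op{I}(F)$-torsion $\Rightarrow$ lies in $\op{Im}\beta_{\mathscr{O}}$'' via Lemma~\ref{lem:wsplit} is invalid in general: the lemma identifies $\op{Im}\beta_{\mathscr{O}}$ with the kernel of the projection to $\mathbf{W}$-cohomology, and a free $\op{W}(F)$-module can contain nonzero elements annihilated by $\op{I}(F)$ (for instance over a finite field, where $\op{I}(F)^2=0$), so being $\op{I}(F)$-torsion does not force the $\mathbf{W}$-component to vanish.

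The gap is easy to close, and then your argument matches the paper's. By Proposition~\ref{prop:glnw} the untwisted $\mathbf{W}$-cohomology of ${\op{B}}\op{GL}_n$ is a polynomial $\op{W}(F)$-algebra on the even Pontryagin classes $\op{p}_{2j}$ (degree $4j$), together with $\op{e}_n^2$ (degree $2n\equiv 0\bmod 4$) when $n$ is even; hence it vanishes in degree $4i+2$, so the $\mathbf{W}$-component of the image of $\op{p}_{2i+1}$ is zero and this class lies in $\op{Im}\beta_{\mathscr{O}}$ (equivalently, by the presentation in Theorem~\ref{thm:glnin}, every class in bidegree $(4i+2,0)$ lies in the ideal generated by the Bockstein classes, where $\rho$ is injective by part (3) of that theorem). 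With this repair your mod~2 comparison is exactly the paper's Proposition~\ref{prop:pwhitneymod2}, and your part (2) is the paper's argument verbatim. Your circularity worry is unfounded: Proposition~\ref{prop:glnw} and Lemma~\ref{lem:wsplit} are established independently of the present proposition, and the paper itself deduces the proposition from Theorem~\ref{thm:glnin}, i.e.\ treats it as a corollary of the structure theorem, just as you propose.
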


\begin{proof}
By Theorem~\ref{thm:glnin}, it suffices to show that all three classes have the same reduction in $\op{Ch}^\bullet({\op{B}}\op{GL}_n)$. This follows from Proposition~\ref{prop:pwhitneymod2}. The Whitney sum formula then follows directly from the Whitney sum formula for the Pontryagin classes of symplectic bundles and the compatibility of Whitney sum and symplectification, cf. \cite{chow-witt}.
\end{proof}

\begin{remark}
Note that the Whitney sum formula above is exactly the classical one from \cite{brown}. It is easier to state simply by our conventions, cf. \cite[Remark 5.7]{chow-witt}, concerning indexing of the Pontryagin classes. 
\end{remark}

\section{The Chow--Witt ring of \texorpdfstring{${\op{B}}\op{GL}_n$}{BGLn}: proofs}
\label{sec:inproofs}

The main goal of this section is to prove Theorem~\ref{thm:glnin} which is a Chow--Witt analogue of \v Cadek's description of integral cohomology of ${\op{B}}\op{O}(n)$ with local coefficients. The arguments are based on the decomposition into $\mathbf{W}$-cohomology and the image of $\beta$.

\subsection{Projective spaces}

As a first step and basis of the inductive proof we need to recall the computations of the $\mathbf{I}^\bullet$-cohomology and Chow--Witt rings of projective spaces $\mathbb{P}^n$ from \cite{fasel:ij}. Since $\op{Pic}(\mathbb{P}^n)\cong\mathbb{Z}$, there are only two possible dualities to consider, given by the line bundles $\mathscr{O}_{\mathbb{P}^n}$ and $\mathscr{O}_{\mathbb{P}^n}(1)$. 

It is a most classical computation that $\op{Ch}^\bullet(\mathbb{P}^n)\cong\mathbb{Z}/2\mathbb{Z}[\overline{\op{c}}_1]/(\overline{\op{c}}_1^{n+1})$. The Steenrod squares are given by $\op{Sq}^2_{\mathscr{O}}(\overline{\op{c}}_1)=\overline{\op{c}}_1^2$ and $\op{Sq}^2_{\mathscr{O}(1)}(\overline{\op{c}}_1)=0$. In particular, $\ker\op{Sq}^2_{\mathscr{O}}=\mathbb{Z}/2\mathbb{Z}[\overline{\op{c}}_1^2]$, and the kernel of $\op{Sq}^2_{\mathscr{O}(1)}$ is the submodule of $\op{Ch}^\bullet(\mathbb{P}^n)$ generated by odd powers of $\overline{\op{c}}_1$. 

The following is a direct reformulation of the computations in \cite[Section 11]{fasel:ij}. 

\begin{proposition}
\label{prop:faselpn}
\begin{enumerate}
\item If $n$ is odd, then 
\[
\bigoplus_q\op{H}^q(\mathbb{P}^n,\mathbf{I}^q\oplus\mathbf{I}^q(\det\gamma_1^\vee)) \cong \op{W}(F)[\op{e}_1,\op{R}]/(\op{I}(F)\cdot\op{e}_1, \op{e}_1^{n+1}, \op{e}_1\op{R}, \op{R}^2)
\]
Moreover, $\op{e}_1=\beta_{\mathscr{O}(1)}(1)$ and $\op{R}\in \op{H}^n(\mathbb{P}^n,\mathbf{I}^n)$ is the fundamental class of $\mathbb{P}^n$ (which is orientable in this case). The image of $\op{R}$ under the reduction morphism $\rho$ is $\overline{\op{c}}_n\in\op{Ch}^\bullet({\op{B}}\op{GL}_n)$. 
\item If $n$ is even, then 
\[
\bigoplus_q\op{H}^q(\mathbb{P}^n,\mathbf{I}^q\oplus\mathbf{I}^q(\det\gamma_1^\vee)) \cong \op{W}(F)[\op{e}_1,\op{e}_n^\perp]/(\op{I}(F)\cdot\op{e}_1, \op{e}_1^{n+1},\op{e}_1\op{e}_n^\perp,(\op{e}_n^\perp)^2)
\]
Again, $\op{e}_1=\beta_{\mathscr{O}(1)}(1)$, and the class $\op{e}_n^\perp\in \op{H}^n(\mathbb{P}^n,\mathbf{I}^n(\det\gamma_1^\vee))$ is the Euler class of the rank $n$ hyperplane bundle on $\mathbb{P}^n\cong(\mathbb{P}^n)^\vee$.
\end{enumerate}
\end{proposition}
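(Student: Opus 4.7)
The plan is to derive this presentation from the $\operatorname{Im}\beta$-$\mathbf{W}$-decomposition of Lemma~\ref{lem:wsplit}, rather than by direct manipulation of the Gersten complex. First I would compute the twisted $\mathbf{W}$-cohomology of $\mathbb{P}^n$ for both dualities. Using $\mathbb{A}^1$-invariance together with the simplified $\mathbf{W}$-localization sequence for the stratification $\mathbb{P}^n = \mathbb{P}^{n-1} \sqcup \mathbb{A}^n$ (whose normal bundle is $\mathscr{O}(1)$), one obtains inductively that $\op{H}^\bullet(\mathbb{P}^n, \mathbf{W}(\mathscr{L}))$ is free over $\op{W}(F)$, concentrated in degrees $0$ and $n$, with the top class appearing in the $\mathscr{O}$-twist precisely when $\omega_{\mathbb{P}^n} = \mathscr{O}(-n-1)$ is a square in $\op{Pic}(\mathbb{P}^n)/2$, i.e., when $n$ is odd, and in the $\mathscr{O}(1)$-twist when $n$ is even.

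Next I would identify the image of $\beta$ using the Wu formula of Proposition~\ref{prop:wuformula}. On $\mathbb{P}^n$ we have $\op{Ch}^\bullet(\mathbb{P}^n) \cong \mathbb{Z}/2\mathbb{Z}[\overline{\op{c}}_1]/(\overline{\op{c}}_1^{n+1})$ with $\op{Sq}^2_{\mathscr{O}(1)}(\overline{\op{c}}_1^k) = k\overline{\op{c}}_1^{k+1}$ and $\op{Sq}^2_{\mathscr{O}}$ differing from it by multiplication by $\overline{\op{c}}_1$ via Proposition~\ref{prop:twistedsq2}. Hence the $\rho$-image of the Bocksteins is precisely the set of odd (respectively positive even) powers of $\overline{\op{c}}_1$; by Lemma~\ref{lem:wsplit}, $\rho$ is injective on $\op{Im}\beta$, so the classes $\op{e}_1^k$ for $1 \leq k \leq n$ generated by $\op{e}_1 := \beta_{\mathscr{O}(1)}(1)$ are nonzero $2$-torsion classes alternating between the two twists, and they exhaust the image of the Bocksteins. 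Combined with the free $\mathbf{W}$-cohomology, this gives the claimed additive structure.

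It remains to install the multiplicative relations. The annihilation $\op{I}(F)\cdot\op{e}_1 = 0$ follows from the $\op{W}(F)$-linearity of the B\"ar sequence since $\op{e}_1 \in \op{Im}\beta_{\mathscr{O}(1)}$, cf.~Lemma~\ref{lem:baertorsion}. The relations $\op{e}_1^{n+1} = 0$, $\op{e}_1\op{R} = 0$ (respectively $\op{e}_1\op{e}_n^\perp = 0$), $\op{R}^2 = 0$, and $(\op{e}_n^\perp)^2 = 0$ all follow from cohomological dimension, since $\op{H}^q(\mathbb{P}^n, \mathbf{I}^q(\mathscr{L})) = 0$ for $q > n$ by the length of the Gersten resolution. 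The main obstacle will be pinning down the free top generator as the specific geometric class claimed: in the odd-$n$ case, that the $\mathbf{W}$-generator in degree $n$ is indeed the fundamental class $\op{R}$ satisfying $\rho(\op{R}) = \overline{\op{c}}_n$, and in the even-$n$ case that it is the Euler class $\op{e}_n^\perp$ of the rank $n$ quotient bundle $\mathscr{Q}_n$ (whose determinant is $\mathscr{O}(1)$, matching the twist). Both identifications can be extracted by comparing with Fasel's calculations in \cite[Section 11]{fasel:ij}, using compatibility of closed-point pushforward with reduction mod $\eta$ in the fundamental-class case and the Euler class axioms applied to $\mathscr{Q}_n$ in the hyperplane-bundle case.
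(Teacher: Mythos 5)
Your proposal is correct in substance, but it takes a genuinely different route from the paper's own proof. The paper simply quotes Fasel's computations in \cite[Section 11]{fasel:ij}, which already give the additive structure of $\op{H}^q(\mathbb{P}^n,\mathbf{I}^q(\mathscr{L}))$ for both twists, and then upgrades to the stated ring presentation by observing that the known characteristic classes define a ring homomorphism from the candidate presentation, that $\rho(\op{e}_1)=\overline{\op{c}}_1$ forces nontriviality of all powers $\op{e}_1^k$, $k\leq n$, and that the top classes $\op{R}$ resp.\ $\op{e}_n^\perp$ can have no further nontrivial products for dimension reasons. You instead re-derive the additive structure from the $\op{Im}\beta$--$\mathbf{W}$ decomposition: free $\mathbf{W}$-cohomology concentrated in degrees $0$ and $n$ via the localization sequence for $\mathbb{P}^{n-1}\subset\mathbb{P}^n$, the torsion via the Wu formula and the injectivity of $\rho$ on $\op{Im}\beta$ from Lemma~\ref{lem:wsplit}, and the relations by the same degree-vanishing argument the paper uses. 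This is exactly the alternative description mentioned in the remark following Proposition~\ref{prop:casen1}, and it has the virtue of being uniform with the strategy used for ${\op{B}}\op{GL}_n$ and $\op{Gr}(k,n)$. The one place where your induction is thinner than stated is the twisted step: to get $\op{H}^\bullet(\mathbb{P}^n,\mathbf{W}(\mathscr{O}(1)))$ concentrated in the claimed degree you must identify the connecting map $\op{H}^0(\mathbb{A}^n,\mathbf{W})\to\op{H}^0(\mathbb{P}^{n-1},\mathbf{W})$ as an isomorphism (equivalently, show the vanishing of the twisted $\op{H}^0$), which does not follow formally from exactness; this boundary determination is essentially the nontrivial content of Fasel's calculation (or a direct second-residue argument), so your approach does not fully avoid the external input it aims to replace --- consistent with your own appeal to \cite{fasel:ij} for identifying $\op{R}$ and $\op{e}_n^\perp$ and for $\rho(\op{R})$ being the generator of $\op{Ch}^n(\mathbb{P}^n)$. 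With that point made explicit, your argument goes through and yields the same presentation.
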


\begin{proof}
Note that \cite{fasel:ij} only establishes the additive structure statements, not quite the full presentation of the ring structure as formulated. Nevertheless, the statements about the ring structure are basically direct consequences as follows: since we already know some characteristic classes of vector bundles, we obtain a ring homomorphism from our claimed presentation to the cohomology ring of $\mathbb{P}^n$. Additively we also know that the Euler class reduces to $\overline{\op{c}}_1$, in particular the nontriviality of the powers of the Euler class is then immediate and this already deals with all the torsion classes. The statement for the nontorsion classes $\op{R}$ resp. $\op{e}_n^\perp$ follows directly, since these cannot have nontrivial intersections with anything else for dimension reasons. 
\end{proof}

\begin{remark}
The classical presentations of the integral cohomology of real projective spaces are recovered exactly for $F=\mathbb{R}$. The algebraic Euler class maps to the topological Euler class under real realization so that also the real realization morphism induces an isomorphism from $\mathbf{I}^\bullet$-cohomology to the integral cohomology of real projective space, cf.~\cite{4real}. 
\end{remark}

The following is the Chow--Witt version of \cite[Lemma 1]{cadek}. This is basically a direct consequence of the above restatement of the computations in \cite[Section 11]{fasel:ij}, noting that ${\op{B}}\op{GL}_1\cong\mathbb{P}^\infty$.

\begin{proposition}
\label{prop:casen1}
The Euler class $\op{e}_1\in \op{H}^1(\mathbb{P}^\infty,\mathbf{I}^1(\det\gamma_1^\vee))$ is nontrivial. Moreover, $\op{e}_1=\beta_{\det\gamma_1^\vee}(1)$. There is an isomorphism
\[
\bigoplus_q\op{H}^q(\mathbb{P}^\infty,\mathbf{I}^q\oplus \mathbf{I}^q(\det\gamma_1^\vee))\cong \op{W}(F)[\op{e}_1]/(\op{I}(F)\cdot\op{e}_1).
\]
The reduction morphism $\op{H}^1(\mathbb{P}^\infty,\mathbf{I}^1(\det\gamma_1^\vee))\to \op{Ch}^1(\mathbb{P}^\infty)$ maps $\op{e}_1$ to $\op{Sq}^2_{\det\gamma_1^\vee}(1)=\overline{\op{c}}_1$.
In particular, Theorem~\ref{thm:glnin} is true for $n=1$. 
\end{proposition}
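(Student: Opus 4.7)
The plan is to deduce the proposition by passing to the limit in the computations for finite-dimensional projective spaces given by Proposition~\ref{prop:faselpn}. The classifying space $\mathbb{P}^\infty \cong {\op{B}}\op{GL}_1$ is modelled by the colimit of its finite-dimensional approximations $\mathbb{P}^n$, so in each fixed cohomological degree $q$ and for each line bundle $\mathscr{L} \in \{\mathscr{O}, \det\gamma_1^\vee\}$ I would argue $\op{H}^q(\mathbb{P}^\infty, \mathbf{I}^q(\mathscr{L})) = \lim_n \op{H}^q(\mathbb{P}^n, \mathbf{I}^q(\mathscr{L}))$. The restriction maps along the linear inclusions $\mathbb{P}^n \hookrightarrow \mathbb{P}^{n+1}$ preserve $\op{e}_1$ by naturality of the Euler class, and once $n > q$ the degree-$q$ piece of the presentations from Proposition~\ref{prop:faselpn} is generated solely by $\op{e}_1^q$: the extra generators $\op{R}$ (for $n$ odd) and $\op{e}_n^\perp$ (for $n$ even) live in degree $n > q$, and the relation $\op{e}_1^{n+1} = 0$ disappears from low-degree pieces. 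Combined with the universal torsion relation $\op{I}(F) \cdot \op{e}_1 = 0$ already present in each $\mathbb{P}^n$, this yields the presentation $\op{W}(F)[\op{e}_1]/(\op{I}(F) \cdot \op{e}_1)$.

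To identify the Euler class as $\beta_{\det\gamma_1^\vee}(1)$, I would compare images under the reduction morphism $\rho$. By Proposition~\ref{prop:twistedsq2}, $\rho(\beta_{\det\gamma_1^\vee}(1)) = \op{Sq}^2_{\det\gamma_1^\vee}(1) = \overline{\op{c}}_1(\det\gamma_1^\vee) \cdot 1 = \overline{\op{c}}_1$, while $\rho(\op{e}_1) = \overline{\op{c}}_1$ by Proposition~\ref{prop:sw} (the top Stiefel--Whitney class is the mod $2$ reduction of the Euler class). The degree-$(1,1)$ part of the cohomology ring identified in the first step is a copy of $\op{W}(F)/\op{I}(F) \cong \mathbb{F}_2$ generated by $\op{e}_1$, and it maps isomorphically onto $\op{Ch}^1(\mathbb{P}^\infty) = \mathbb{F}_2 \cdot \overline{\op{c}}_1$ under $\rho$; hence the two classes coincide. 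This simultaneously yields the claim that $\rho(\op{e}_1) = \overline{\op{c}}_1$.

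Finally, I would verify the instance $n = 1$ of Theorem~\ref{thm:glnin}. For $n = 1$ one has $[(n-1)/2] = 0$, so there are no Pontryagin generators $P_i$ and no non-empty index sets $J$; the ring $\mathscr{R}_1$ thus reduces to $\op{W}(F)[X_1, T_\emptyset]$. The relation $X_{2k+1} = T_{\{k\}}$ from Definition~\ref{def:rngln} in the case $k = 0$ collapses to $X_1 = T_\emptyset$ under the convention $\overline{\op{c}}_0 = 1$ (so that $\tau_{\{0\}} = \tau_\emptyset$, compatibly with Proposition~\ref{prop:eulerrel}). The only surviving relation is then $\op{I}(F) \cdot T_\emptyset = 0$, so $\mathscr{R}_1/\mathscr{I}_1 \cong \op{W}(F)[T_\emptyset]/(\op{I}(F) \cdot T_\emptyset)$ matches the presentation obtained above via $\theta_1 \colon T_\emptyset \mapsto \beta_{\det\gamma_1^\vee}(1) = \op{e}_1$ and $X_1 \mapsto \op{e}_1$. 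The main obstacle in executing this plan is really just the careful bookkeeping of the limit in the first paragraph, i.e.\ checking that the top-degree generators $\op{R}$ and $\op{e}_n^\perp$ of Proposition~\ref{prop:faselpn} contribute nothing to any fixed low degree; everything else is a direct invocation of prior propositions together with a verification of the degenerate $n = 1$ case of the indexing conventions.
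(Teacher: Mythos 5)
Your proposal is correct and follows essentially the same route as the paper, which treats the statement as a direct consequence of the finite-dimensional computations in Proposition~\ref{prop:faselpn} (i.e.\ Fasel's results for $\mathbb{P}^n$) together with the identification ${\op{B}}\op{GL}_1\cong\mathbb{P}^\infty$. Your added bookkeeping --- stabilization in fixed degree so that $\op{R}$, $\op{e}_n^\perp$ and the relation $\op{e}_1^{n+1}=0$ drop out, the comparison of $\op{e}_1$ and $\beta_{\det\gamma_1^\vee}(1)$ via $\rho$, and the degenerate $n=1$ indexing in Definition~\ref{def:rngln} --- just makes explicit what the paper leaves implicit.
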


\begin{remark}
  Alternatively, we can formulate the description of the $\mathbf{I}$-cohomology of projective space in terms of the decomposition into $\mathbf{W}$-cohomology and the image of $\beta$. The $\mathbf{W}$-cohomology of $\mathbb{P}^n$ is an exterior $\op{W}(F)$-algebra on one generator, which is $\op{e}_n^\perp\in\op{H}^n(\mathbb{P}^n,\mathbf{W}(\mathscr{O}(1)))$ for $n$ even and $\op{R}=[\op{pt}]\in\op{H}^n(\mathbb{P}^n,\mathbf{W})$ for $n$ odd. The image of $\beta_{\mathscr{L}}$ is identified with the image of $\op{Sq}^2_{\mathscr{L}}$ and consists of the appropriate powers of $\op{e}_1$. Multiplication with torsion classes can be computed after reduction in $\op{Ch}^\bullet(\mathbb{P}^n)$.
\end{remark}

\subsection{Computation of $\mathbf{W}$-cohomology}

The next step is the computation of the $\mathbf{W}$-cohomology of ${\op{B}}\op{GL}_n$. 

\begin{proposition}
  \label{prop:glnw}
  The $\mathbf{W}$-cohomology of ${\op{B}}\op{GL}_n$ is given as follows:
  \[
  \op{H}^\bullet({\op{B}}\op{GL}_n,\mathbf{W}\oplus\mathbf{W}(\det\gamma_n^\vee))\cong \left\{\begin{array}{ll}
  \op{W}(F)[{\op{p}}_2,{\op{p}}_4,\dots,{\op{p}}_{n-2},\op{e}_n] & n\equiv 0\bmod 2 \\
  \op{W}(F)[{\op{p}}_2,{\op{p}}_4,\dots,{\op{p}}_{n-1}] & n\equiv 1\bmod 2 
  \end{array}\right.
  \]

  The morphisms $\op{H}^\bullet({\op{B}}\op{GL}_n,\mathbf{W}(\mathscr{L}))\to \op{H}^\bullet({\op{B}}\op{GL}_{n-1},\mathbf{W}(\mathscr{L})$ induced by the stabilization morphism ${\op{B}}\op{GL}_{n-1}\to {\op{B}}\op{GL}_n$ are compatible with Pontryagin classes. The restriction along ${\op{B}}\op{GL}_{2n+1}\to {\op{B}}\op{GL}_{2n}$  maps ${\op{p}}_{2n}$ to ${\op{e}}_{2n}^2$.
\end{proposition}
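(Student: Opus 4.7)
The plan is to argue by induction on $n$, applying the $\mathbf{W}$-cohomology localization sequence of Proposition~\ref{prop:locgln} summed over both dualities in $\op{Pic}({\op{B}}\op{GL}_n)/2$. The base case $n=1$ follows from Proposition~\ref{prop:casen1}: every positive-degree class in $\op{H}^\bullet(\mathbb{P}^\infty,\mathbf{I}^\bullet\oplus\mathbf{I}^\bullet(\det\gamma_1^\vee))$ is $\op{I}(F)$-torsion (a power of $\op{e}_1$), hence vanishes in $\mathbf{W}$-cohomology, leaving $\op{W}(F)$ concentrated in degree zero.

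For the odd case $n=2k+1$, the Euler class $\op{e}_{2k+1}=\tau_{\{k\}}$ of Proposition~\ref{prop:eulerrel} lies in the image of $\beta$ and is $\op{I}(F)$-torsion by Lemma~\ref{lem:baertorsion}, hence vanishes in $\mathbf{W}$-cohomology. Multiplication by the Euler class is the zero map and the Gysin sequence collapses to a short exact sequence
\[
0\to \op{H}^q({\op{B}}\op{GL}_{2k+1},\mathbf{W}(\mathscr{L}))\xrightarrow{\iota^*} \op{H}^q({\op{B}}\op{GL}_{2k},\mathbf{W}(\iota^*\mathscr{L})) \xrightarrow{\partial} \op{H}^{q-2k}({\op{B}}\op{GL}_{2k+1},\mathbf{W}(\mathscr{L}\otimes\det\gamma_{2k+1}))\to 0.
\]
The induction hypothesis gives $\op{H}^\bullet({\op{B}}\op{GL}_{2k},\mathbf{W}\oplus\mathbf{W}(\det\gamma_{2k}^\vee))=\op{W}(F)[\op{p}_2,\dots,\op{p}_{2k-2},\op{e}_{2k}]$, free of rank two over the subring $\op{W}(F)[\op{p}_2,\dots,\op{p}_{2k-2},\op{e}_{2k}^2]$ with basis $\{1,\op{e}_{2k}\}$. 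Using the projection formula $\partial(\iota^*(x)\cdot y)=x\cdot\partial(y)$ and the Gysin identity $\partial(\op{e}_{2k})=1$, one shows $\ker\partial$ is exactly this subring. Together with the relation $\op{e}_{2k}^2=\op{p}_{2k}$ on ${\op{B}}\op{GL}_{2k}$ discussed below, this identifies $\op{H}^\bullet({\op{B}}\op{GL}_{2k+1},\mathbf{W}\oplus\mathbf{W}(\det\gamma_{2k+1}^\vee))$ with $\op{W}(F)[\op{p}_2,\dots,\op{p}_{2k}]$ and verifies the restriction formula $\op{p}_{2k}\mapsto\op{e}_{2k}^2$.

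In the even case $n=2k$, the induction hypothesis yields $\op{H}^\bullet({\op{B}}\op{GL}_{2k-1},\mathbf{W}\oplus\mathbf{W}(\det\gamma_{2k-1}^\vee))=\op{W}(F)[\op{p}_2,\dots,\op{p}_{2k-2}]$, concentrated in trivial duality and generated by classes that lift via stabilization from ${\op{B}}\op{GL}_{2k}$. Hence $\iota^*$ is surjective, $\partial=0$, and the Gysin sequence breaks into short exact sequences
\[
0\to \op{H}^{q-2k}({\op{B}}\op{GL}_{2k},\mathbf{W}(\mathscr{L}\otimes\det\gamma_{2k}))\xrightarrow{\cdot\op{e}_{2k}}\op{H}^q({\op{B}}\op{GL}_{2k},\mathbf{W}(\mathscr{L}))\xrightarrow{\iota^*}\op{H}^q({\op{B}}\op{GL}_{2k-1},\mathbf{W}(\iota^*\mathscr{L}))\to 0.
\]
In particular, multiplication by $\op{e}_{2k}$ is injective, so iterating the identification exhibits the total $\mathbf{W}$-cohomology as the polynomial ring $\op{W}(F)[\op{p}_2,\dots,\op{p}_{2k-2},\op{e}_{2k}]$.

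The main obstacle is the relation $\op{p}_{2k}=\op{e}_{2k}^2$ in $\op{H}^\bullet({\op{B}}\op{GL}_{2k},\mathbf{W})$ that feeds back into the odd step. The approach is through the symplectification map ${\op{B}}\op{GL}_{2k}\to{\op{B}}\op{Sp}_{4k}$ used to define Pontryagin classes: on ${\op{B}}\op{Sp}_{4k}$ the top Pontryagin class coincides with the Euler class of the universal symplectic bundle by \cite[Theorem 4.10]{chow-witt}, and pulling back along $E\mapsto E\oplus E^\vee$ gives $\op{e}(E)\cdot\op{e}(E^\vee)=\langle-1\rangle^{2k}\op{e}(E)^2=\op{e}_{2k}^2$, where the sign is trivial since the exponent is even. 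The accompanying Gysin identity $\partial(\op{e}_{2k})=1$ is a standard Thom-isomorphism calculation, identifying $\op{e}_{2k}$ on ${\op{B}}\op{GL}_{2k}\simeq S(\gamma_{2k+1})$ with the restriction of an orientation class for the universal rank-$(2k+1)$ bundle.
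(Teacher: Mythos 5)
Your argument is correct in outline and follows the same strategy as the paper's proof: induction on $n$ via the $\mathbf{W}$-cohomology localization sequence of Proposition~\ref{prop:locgln}, base case from Proposition~\ref{prop:casen1}, vanishing of the odd Euler class in $\mathbf{W}$-cohomology via Proposition~\ref{prop:eulerrel} and Lemma~\ref{lem:baertorsion}, and surjectivity of $\iota^\ast$ (hence $\partial=0$ and injectivity of multiplication by $\op{e}_{2k}$) in the even step. Where you genuinely diverge is in the two delicate points. For the relation $\iota^\ast\op{p}_{2k}=\op{e}_{2k}^2$, the paper establishes it only after the polynomial structure is known, by pulling back to the orientation cover ${\op{B}}\op{SL}$ and using injectivity of that pullback together with \cite[Theorem 1.3]{chow-witt}; you instead derive it directly on ${\op{B}}\op{GL}_{2k}$ from the definition of Pontryagin classes via symplectification, the identification of the top Borel class with the symplectic Euler class, and Whitney multiplicativity $\op{e}(E\oplus E^\vee)=\op{e}(E)\op{e}(E^\vee)$ -- essentially re-proving \cite[Proposition 7.9]{chow-witt}, which the paper itself invokes as an alternative. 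This is legitimate and even convenient: since your relation is proved independently of the induction, using it inside the odd step (to see that the untwisted part of $\op{H}^\bullet({\op{B}}\op{GL}_{2k},\mathbf{W})$ lies in the image of $\iota^\ast$, so the projection formula identifies $\ker\partial$) is not circular; but the Whitney formula for Euler classes, the compatibility of the symplectic Euler/Borel class with the $\op{GL}$-theoretic Euler class, and the sign $\op{e}(E^\vee)=\langle-1\rangle^{\operatorname{rk}E}\op{e}(E)$ should be cited (\cite{chow-witt}, \cite{AsokFaselEuler}) rather than treated as obvious. Second, the normalization $\partial(\op{e}_{2k})=1$ need not be asserted as a separate ``standard Thom-isomorphism calculation'': up to a unit of $\op{W}(F)$, which is all your argument needs, it already follows from your own short exact sequence in degree $2k$ with the twisted coefficient, since there $\partial$ surjects onto $\op{H}^0({\op{B}}\op{GL}_{2k+1},\mathbf{W})\cong\op{W}(F)$ while the source in that bidegree is the cyclic $\op{W}(F)$-module generated by $\op{e}_{2k}$ -- this is exactly how the paper pins it down. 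A last small point: in the base case the reason positive-degree classes die in $\mathbf{W}$-cohomology is that they lie in the image of $\beta$, i.e.\ in the kernel of the surjection $\op{H}^q(\mathbb{P}^\infty,\mathbf{I}^q(\mathscr{L}))\to\op{H}^q(\mathbb{P}^\infty,\mathbf{W}(\mathscr{L}))$, not merely that they are $\op{I}(F)$-torsion.
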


\begin{proof}
  We note that the compatibility of the Pontryagin classes with stabilization follows from their definition, cf.~\cite[Proposition 5.8]{chow-witt}. 
  
  The result is proved by induction. The base case for the induction is given by ${\op{B}}\op{GL}_1\cong\mathbb{P}^\infty$. In this case, the claim is that
  \[
  \op{H}^q(\mathbb{P}^\infty,\mathbf{W}(\mathscr{L}))\cong\left\{\begin{array}{ll}\op{W}(F) & q=0, \mathscr{L}=\mathscr{O}\\0&\textrm{otherwise}\end{array}\right.
  \]
  This follows from Fasel's computations, cf. Proposition~\ref{prop:casen1}.

  For the inductive step, we use the localization sequence of Proposition~\ref{prop:locgln}
\[
\cdots\to \op{H}^{q-n}({\op{B}}\op{GL}_n,\mathbf{W}(\mathscr{L}\otimes \det\gamma_n))\xrightarrow{\op{e}_n} \op{H}^q({\op{B}}\op{GL}_n,\mathbf{W}(\mathscr{L}))\xrightarrow{\iota^\ast}
\]
\[
\xrightarrow{\iota^\ast} \op{H}^q({\op{B}}\op{GL}_{n-1},\mathbf{W}(\iota^\ast\mathscr{L}))\xrightarrow{\partial} \op{H}^{q-n+1}({\op{B}}\op{GL}_n,\mathbf{W}(\mathscr{L}\otimes \det\gamma_n))\to\cdots
\]

If $n$ is even, then by the induction hypothesis $\op{H}^\bullet({\op{B}}\op{GL}_{n-1},\mathbf{W}(\mathscr{L}))$ is a polynomial $\op{W}(F)$-algebra generated by the Pontryagin classes $\op{p}_2,\dots,\op{p}_{n-2}$. Since the stabilization morphism $\iota^\ast$ is compatible with the Pontryagin classes, it is surjective, hence $\partial=0$. Thus, $\op{e}_n$ is injective. Induction on the cohomological degree proves the claim that $\op{e}_n$ is a new polynomial generator; alternatively, we can use the splitting principle of \cite[Proposition 7.8]{chow-witt} to show independence of $\op{e}_n$ from the Pontryagin classes. 

If $n$ is odd, we know that $\op{e}_n=0$ in $\mathbf{W}$-cohomology, since by Proposition~\ref{prop:eulerrel} it is in the image of $\beta$. Therefore, the boundary map
\[
\partial\colon \op{H}^{n-1}({\op{B}}\op{GL}_{n-1},\mathbf{W}(\det\gamma_{n-1}^\vee))\to \op{H}^0({\op{B}}\op{GL}_n,\mathbf{W})
\]
is surjective. The target is a cyclic $\op{W}(F)$-module generated by $1$, and by the inductive assumption the image is a cyclic $\op{W}(F)$-module generated by $\partial{\op{e}}_{n-1}$. In particular, $\partial{\op{e}}_{n-1}=1$, up to a unit in $\op{W}(F)$. By the derivation property for $\partial$, the boundary map is trivial on $\op{H}^\bullet({\op{B}}\op{GL}_{n-1},\mathbf{W})$ and injective on $\op{H}^\bullet({\op{B}}\op{GL}_{n-1},\mathbf{W}(\det\gamma_{n-1}^\vee))$. This implies that the $\mathbf{W}$-cohomology of ${\op{B}}\op{GL}_n$ is a polynomial $\op{W}(F)$-algebra generated by the Pontryagin classes $\op{p}_2,\dots,\op{p}_{n-1}$.

Finally, to prove the claim concerning restriction of the top Pontryagin class, consider the morphism 
\[
o^\ast\colon \op{H}^\bullet({\op{B}}\op{GL}_{2n(+1)}, \mathbf{W}(\det\gamma_n^\vee)) \to \op{H}^\bullet({\op{B}}\op{SL}_{2n(+1)}, \mathbf{W})
\]
given by pullback to  the orientation cover. This maps the Pontryagin classes amnd Euler class to their respective counterparts for ${\op{B}}\op{SL}_{2n(+1)}$. From the present computation of the $\mathbf{W}$-cohomology of ${\op{B}}\op{GL}_{2n(+1)}$ and the computations in \cite[Theorem 1.3]{chow-witt} for ${\op{B}}\op{Sl}_{2n(+1)}$ we conclude that $o^\ast$ is injective. Moreover, $\op{p}_{2n}-\op{e}_{2n}^2$ is mapped to $0$ by \cite[Theorem 1.3]{chow-witt} which proves the claim.
\end{proof}

\begin{remark}
  For the case ${\op{B}}\op{SL}_n$, the analogous formulas can be obtained from the general machinery for $\eta$-inverted cohomology theories in  \cite{ananyevskiy}.
\end{remark}

\subsection{Relations in the mod 2 Chow ring}

In this subsection we show that the ideal $\mathscr{I}_n$ of relations between characteristic classes is annihilated by the composition
\[
\mathscr{R}_n\xrightarrow{\theta_n} \op{H}^\bullet({\op{B}}\op{GL}_n,\mathbf{I}^\bullet\oplus\mathbf{I}^\bullet(\det\gamma_n^\vee)) \xrightarrow{\rho}\op{Ch}^\bullet({\op{B}}\op{GL}_n)^{\oplus 2}.
\]

\begin{lemma}
\label{lem:oddeulermod2}
Assume $n$ is odd. With the above notation we have
\[
\rho(\op{e}_n)=\rho\circ \beta_{\det\gamma_n^\vee}(\overline{\op{c}}_{n-1})=\overline{\op{c}}_{n}.
\]
\end{lemma}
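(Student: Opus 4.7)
The plan is to deduce both equalities by combining Proposition~\ref{prop:eulerrel} with the Wu formula and the identification of $\rho\circ\beta_{\mathscr{L}}$ as a twisted Steenrod square.

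For the first equality, I would simply invoke Proposition~\ref{prop:eulerrel}, which asserts that for odd $n=2k+1$ the Euler class already satisfies $\op{e}_n=\beta_{\det\gamma_n^\vee}(\overline{\op{c}}_{n-1})$ in $\op{H}^n({\op{B}}\op{GL}_n,\mathbf{I}^n(\det\gamma_n^\vee))$. Applying the reduction morphism $\rho$ to both sides then yields $\rho(\op{e}_n)=\rho\circ\beta_{\det\gamma_n^\vee}(\overline{\op{c}}_{n-1})$.

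For the second equality, I would use Proposition~\ref{prop:twistedsq2} to rewrite $\rho\circ\beta_{\det\gamma_n^\vee}(\overline{\op{c}}_{n-1})=\op{Sq}^2_{\det\gamma_n^\vee}(\overline{\op{c}}_{n-1})$; since $\det\gamma_n^\vee$ and $\det\gamma_n$ coincide in $\op{Pic}({\op{B}}\op{GL}_n)/2$, this equals $\op{Sq}^2_{\det\gamma_n}(\overline{\op{c}}_{n-1})$. The twisted Wu formula from Proposition~\ref{prop:wuformula} then evaluates this to $(n-2)\overline{\op{c}}_n$, which is $\overline{\op{c}}_n$ modulo $2$ precisely because $n$ is odd.

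There is no genuine obstacle: the argument is an immediate assembly of facts already established in the preceding subsections. The only point to keep track of is that the Steenrod square depends on a line bundle only through its class in $\op{Pic}/2$, which is what allows the passage from $\det\gamma_n^\vee$ to $\det\gamma_n$ in applying Proposition~\ref{prop:wuformula}.
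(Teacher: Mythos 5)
Your proof is correct and essentially the paper's own argument: the paper deduces the lemma from Fasel's results (Prop.\ 10.3, Rem.\ 10.5 of \cite{fasel:ij}) together with Proposition~\ref{prop:twistedsq2} and Proposition~\ref{prop:sw}, which is exactly the content you invoke via Proposition~\ref{prop:eulerrel} and the Wu formula of Proposition~\ref{prop:wuformula}. Your explicit remark that $\op{Sq}^2_{\mathscr{L}}$ depends only on the class of $\mathscr{L}$ in $\op{Pic}/2$, justifying the passage from $\det\gamma_n^\vee$ to $\det\gamma_n$, is a point the paper leaves implicit and is handled correctly.
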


\begin{proof}
This follows from \cite[Proposition 10.3, Remark 10.5]{fasel:ij}, the  identification $\op{Sq}^2_{\det\gamma_n^\vee}=\rho\circ\beta_{\det\gamma_n^\vee}$ from Proposition~\ref{prop:twistedsq2}, and the identification of Stiefel--Whitney classes with reductions of Chern classes in Proposition~\ref{prop:sw}.
\end{proof}

\begin{proposition}
\label{prop:type3mod2}
For two index sets $J$ and $J'$, the elements 
\begin{eqnarray*}
B_J\cdot B_{J'}&-&\sum_{k\in J} B_{\{k\}}\cdot P_{(J\setminus\{k\})\cap J'}\cdot B_{\Delta(J\setminus\{k\},J')}\\
B_J\cdot T_{J'}&-&\sum_{k\in J}B_{\{k\}}\cdot P_{(J\setminus\{k\})\cap J'}\cdot T_{\Delta(J\setminus\{k\},J')}\\
T_J\cdot B_{J'}&-&B_J\cdot T_{J'} + T_\emptyset \cdot P_{J\cap J'}\cdot B_{\Delta(J,J')}\\
T_J\cdot T_{J'}&-&B_J\cdot B_{J'}+T_\emptyset\cdot P_{J\cap J'}\cdot T_{\Delta(J,J')}
\end{eqnarray*}

have trivial images under the composition $\rho\circ\theta_n\colon \mathscr{R}_n\to\op{Ch}^\bullet({\op{B}}\op{GL}_n)$.
\end{proposition}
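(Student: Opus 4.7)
Write $a_j := \overline{\op{c}}_{2j}$ and, for a finite set $A=\{a_1,\dots,a_l\}$ of positive integers, $a_A := \prod_{i\in A}a_i$. By \prettyref{prop:twistedsq2} (applied with $\mathscr{L}=\mathscr{O}$ and $\mathscr{L}=\det\gamma_n^\vee$), the composition $\rho\circ\theta_n$ sends $P_i\mapsto a_i^2$, $B_J\mapsto \op{Sq}^2_{\mathscr{O}}(a_J)$, $T_J\mapsto \op{Sq}^2_{\det\gamma_n^\vee}(a_J)$, and $T_\emptyset\mapsto \overline{\op{c}}_1$. The strategy is to verify the four families of relations as identities in the polynomial ring $\op{Ch}^\bullet({\op{B}}\op{GL}_n)=\mathbb{Z}/2[\overline{\op{c}}_1,\dots,\overline{\op{c}}_n]$, using the derivation property of $\op{Sq}^2_{\mathscr{O}}$ from \prettyref{prop:wuformula} and the formula $\op{Sq}^2_{\det\gamma_n^\vee}(x)=\overline{\op{c}}_1\cdot x+\op{Sq}^2_{\mathscr{O}}(x)$.

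The algebraic engine is the following key identity, valid for arbitrary finite sets $A,A'$ of positive integers: since $a_A\cdot a_{A'}=a_{A\cap A'}^2\cdot a_{\Delta(A,A')}$ and $\op{Sq}^2_{\mathscr{O}}$ is a derivation annihilating squares mod $2$, the Leibniz rule gives
\[
\op{Sq}^2_{\mathscr{O}}(a_A)\cdot a_{A'}+a_A\cdot \op{Sq}^2_{\mathscr{O}}(a_{A'}) \;=\; a_{A\cap A'}^2\cdot \op{Sq}^2_{\mathscr{O}}(a_{\Delta(A,A')}).
\]
For relation (3.1), expand by Leibniz:
\[
\op{Sq}^2_{\mathscr{O}}(a_J)\op{Sq}^2_{\mathscr{O}}(a_{J'})=\sum_{k\in J}\op{Sq}^2_{\mathscr{O}}(a_k)\cdot a_{J\setminus\{k\}}\cdot \op{Sq}^2_{\mathscr{O}}(a_{J'}),
\]
and apply the key identity with $A=J\setminus\{k\}$, $A'=J'$ to rewrite each factor $a_{J\setminus\{k\}}\op{Sq}^2_{\mathscr{O}}(a_{J'})$. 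The target sum on the right of (3.1) is produced, plus a leftover $\sum_{k\in J}\op{Sq}^2_{\mathscr{O}}(a_k)\op{Sq}^2_{\mathscr{O}}(a_{J\setminus\{k\}})\cdot a_{J'}$; expanding the inner Steenrod square by Leibniz again, each unordered pair $\{k,j\}\subseteq J$ appears twice and thus vanishes mod $2$.

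With (3.1) established, relations (3.2)--(3.4) follow mechanically by substituting $\op{Sq}^2_{\det\gamma_n^\vee}=\overline{\op{c}}_1(\cdot)+\op{Sq}^2_{\mathscr{O}}$ on both sides and using the key identity once more to match the $\overline{\op{c}}_1$-linear terms. Concretely, the $\op{Sq}^2_{\mathscr{O}}\op{Sq}^2_{\mathscr{O}}$-parts cancel by (3.1), while the $\overline{\op{c}}_1$-coefficient terms reduce to the identity $\op{Sq}^2_{\mathscr{O}}(a_J a_{J'})=a_{J\cap J'}^2\op{Sq}^2_{\mathscr{O}}(a_{\Delta(J,J')})$; the purely $\overline{\op{c}}_1^2$-terms in (3.4) also cancel immediately. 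The main combinatorial obstacle is the term-by-term analysis for (3.1); once that pairing argument is in hand, (3.2)--(3.4) are essentially bookkeeping with the correction term $\overline{\op{c}}_1$.
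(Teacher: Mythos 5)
Your proof is correct and is essentially the paper's own argument made explicit: the paper likewise reduces the four relations to identities in $\op{Ch}^\bullet({\op{B}}\op{GL}_n)$ via Proposition~\ref{prop:twistedsq2} and then delegates exactly the Leibniz/symmetric-difference manipulations you carry out to \cite[Proposition~7.13]{chow-witt} and to Lemma~4 of \cite{cadek}. The only nitpick is an attribution: $\rho\circ\theta_n(P_i)=\overline{\op{c}}_{2i}^2$ is not a consequence of Proposition~\ref{prop:twistedsq2} but of the known mod~2 reduction of the Pontryagin classes (cf.\ Theorem~\ref{thm:glnin}(2), i.e.\ \cite[Corollary~7.11]{chow-witt}).
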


\begin{proof}
The first relation can be established as in \cite[Proposition 7.13]{chow-witt}. Note that $\rho\circ \theta_n$ maps the elements $B_J$ and $T_J$ to the elements $\op{Sq}^2_{\mathscr{O}}(\overline{\op{c}}_{2j_1}\cdots \overline{\op{c}}_{2j_k})$ and $\op{Sq}^2_{\det\gamma_n^\vee}(\overline{\op{c}}_{2j_1}\cdots \overline{\op{c}}_{2j_k})$, respectively, cf. Proposition~\ref{prop:twistedsq2}.  The proofs of the other relations can be done by the same manipulations as detailed in \cite[Lemma 4]{cadek}. 
\end{proof}

\begin{corollary}
\label{cor:slninmod2}
The composition $\rho\circ\theta_n\colon \mathscr{R}_n\to\op{Ch}^\bullet({\op{B}}\op{GL}_n)^{\oplus 2}$ factors through the quotient $\mathscr{R}_n/\mathscr{I}_n$. 
\end{corollary}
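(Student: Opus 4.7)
The plan is to check that each of the three families of generators of $\mathscr{I}_n$ listed in Definition~\ref{def:rngln} is annihilated by the composition $\rho\circ\theta_n$; the statement then follows from the universal property of the quotient.

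The key observation for the relations of type (1) is that the reduction morphism $\rho$ is induced by the natural projection of sheaves $\mathbf{I}^\bullet(\mathscr{L})\to\mathbf{K}^{\op{M}}_\bullet/2$. On the level of Nisnevich cohomology in degree zero this recovers the quotient map $\op{W}(F)\to\op{W}(F)/\op{I}(F)\cong\mathbb{Z}/2\mathbb{Z}$, so $\rho$ is a ring homomorphism which kills the $\op{W}(F)$-action of $\op{I}(F)$ on everything in its target. Because $\rho\circ\theta_n$ is $\op{W}(F)$-linear, this immediately gives $(\rho\circ\theta_n)(\op{I}(F)\cdot B_J)=0$, and analogously for $T_J$ and $T_\emptyset$, which disposes of the relations of type (1).

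For the relation of type (2), which only occurs when $n=2k+1$ is odd, we need to check $\rho(\op{e}_n)=\rho(\beta_{\det\gamma_n^\vee}(\overline{\op{c}}_{2k}))$ inside $\op{Ch}^\bullet({\op{B}}\op{GL}_n)$. Since $\theta_n(X_n)=\op{e}_n$ and $\theta_n(T_{\{k\}})=\beta_{\det\gamma_n^\vee}(\overline{\op{c}}_{n-1})$, both sides reduce to the top Stiefel--Whitney class $\overline{\op{c}}_n$: this is precisely Lemma~\ref{lem:oddeulermod2}, where the first equality is the identification $\op{Sq}^2_{\det\gamma_n^\vee}=\rho\circ\beta_{\det\gamma_n^\vee}$ from Proposition~\ref{prop:twistedsq2}, combined with the Wu formula in Proposition~\ref{prop:wuformula}.

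Finally, the four families of product relations of type (3) are exactly what Proposition~\ref{prop:type3mod2} verifies after reduction mod 2: under $\rho\circ\theta_n$ the generators $B_J$ and $T_J$ become the Steenrod square expressions $\op{Sq}^2_{\mathscr{O}}(\overline{\op{c}}_{2j_1}\cdots\overline{\op{c}}_{2j_l})$ and $\op{Sq}^2_{\det\gamma_n^\vee}(\overline{\op{c}}_{2j_1}\cdots\overline{\op{c}}_{2j_l})$ by Proposition~\ref{prop:twistedsq2}, and the $P_i$ map to squares $\overline{\op{c}}_{2i}^2$, so that the required identities reduce to the combinatorial manipulations of Steenrod squares carried out in \cite[Lemma 4]{cadek}. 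Combining the three cases, the map $\rho\circ\theta_n$ vanishes on all generators of $\mathscr{I}_n$, hence descends to $\mathscr{R}_n/\mathscr{I}_n$. There is no substantive obstacle here, since all the ingredients have already been established; the corollary is essentially a bookkeeping assembly of Lemma~\ref{lem:oddeulermod2} and Proposition~\ref{prop:type3mod2}.
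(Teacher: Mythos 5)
Your proposal is correct and follows essentially the same route as the paper, whose proof simply cites Lemma~\ref{lem:oddeulermod2} (for the type (2) relation) and Proposition~\ref{prop:type3mod2} (for the type (3) relations); your additional explicit check of the type (1) relations, via the fact that $\op{I}(F)$ acts trivially on the $2$-torsion ring $\op{Ch}^\bullet({\op{B}}\op{GL}_n)$, is the same triviality the paper leaves implicit.
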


\begin{proof}
This follows directly from Lemma~\ref{lem:oddeulermod2} and Proposition~\ref{prop:type3mod2}. 
\end{proof}

\begin{proposition}
\label{prop:pwhitneymod2}
Let $2i+1\leq n$ be an odd natural number. Then 
\[
\rho(\op{p}_{2i+1})=(\op{Sq}^2_{\mathscr{O}}(\overline{\op{c}}_{2i}))^2+ \rho(\op{p}_{2i})\op{Sq}^2_{\mathscr{O}}(\overline{\op{c}}_1)=\op{Sq}^2_{\mathscr{O}}(\overline{\op{c}}_{2i}\overline{\op{c}}_{2i+1}) 
\]
\end{proposition}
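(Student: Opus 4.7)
The plan is to verify that all three expressions coincide with $\overline{\op{c}}_{2i+1}^2$ in $\op{Ch}^\bullet({\op{B}}\op{GL}_n)$, and the result follows.

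First, I would compute $\rho(\op{p}_{2i+1})$ using the explicit description of the Pontryagin class in the integral Chow ring from Theorem~\ref{thm:glnchw}. The fundamental square is commutative, so the reduction $\widetilde{\op{CH}}^\bullet({\op{B}}\op{GL}_n, \mathscr{O}) \to \op{H}^\bullet(\mathbf{I}^\bullet) \xrightarrow{\rho} \op{Ch}^\bullet$ agrees with the reduction through the integral Chow ring $\widetilde{\op{CH}}^\bullet \to \op{CH}^\bullet \xrightarrow{\bmod 2} \op{Ch}^\bullet$. Applying the formula $\op{p}_i \mapsto (-1)^i \op{c}_i^2 + 2 \sum_j (-1)^j \op{c}_j \op{c}_{2i-j}$ and reducing mod 2 annihilates the sum entirely and gives $\rho(\op{p}_{2i+1}) = \overline{\op{c}}_{2i+1}^2$.

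Next, I would compute $(\op{Sq}^2_{\mathscr{O}}(\overline{\op{c}}_{2i}))^2 + \rho(\op{p}_{2i}) \op{Sq}^2_{\mathscr{O}}(\overline{\op{c}}_1)$ using the Wu formula of Proposition~\ref{prop:wuformula}. Since $2i-1$ is odd, we have $\op{Sq}^2_{\mathscr{O}}(\overline{\op{c}}_{2i}) = \overline{\op{c}}_1 \overline{\op{c}}_{2i} + \overline{\op{c}}_{2i+1}$, so squaring gives $\overline{\op{c}}_1^2 \overline{\op{c}}_{2i}^2 + \overline{\op{c}}_{2i+1}^2$ mod 2. Combined with $\rho(\op{p}_{2i}) = \overline{\op{c}}_{2i}^2$ (computed as in step one) and $\op{Sq}^2_{\mathscr{O}}(\overline{\op{c}}_1) = \overline{\op{c}}_1^2$, the two $\overline{\op{c}}_1^2 \overline{\op{c}}_{2i}^2$ terms cancel, leaving $\overline{\op{c}}_{2i+1}^2$.

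Finally, I would expand $\op{Sq}^2_{\mathscr{O}}(\overline{\op{c}}_{2i} \overline{\op{c}}_{2i+1})$ using the derivation property of $\op{Sq}^2_{\mathscr{O}}$. The Wu formula gives $\op{Sq}^2_{\mathscr{O}}(\overline{\op{c}}_{2i+1}) = \overline{\op{c}}_1 \overline{\op{c}}_{2i+1} + 2i \cdot \overline{\op{c}}_{2i+2} = \overline{\op{c}}_1 \overline{\op{c}}_{2i+1}$ (as $2i$ is even), so expanding and simplifying yields two $\overline{\op{c}}_1 \overline{\op{c}}_{2i} \overline{\op{c}}_{2i+1}$ terms which cancel, leaving $\overline{\op{c}}_{2i+1}^2$. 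There is no real obstacle here; the proof is a straightforward verification once one observes that the common value of all three expressions in $\op{Ch}^\bullet({\op{B}}\op{GL}_n)$ is the square $\overline{\op{c}}_{2i+1}^2$.
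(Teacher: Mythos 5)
Your proposal is correct and follows essentially the same route as the paper: both arguments show that all three expressions equal $\overline{\op{c}}_{2i+1}^2$ in $\op{Ch}^\bullet({\op{B}}\op{GL}_n)$, and your Wu-formula and derivation-property computations for the second and third expressions are exactly the paper's. The only (harmless) variation is in the first equality, where the paper computes $\rho(\op{p}_{2i+1})$ directly as $\overline{\op{c}}_{4i+2}(\mathscr{E}_n\oplus\overline{\mathscr{E}}_n)=\overline{\op{c}}_{2i+1}^2$ following Brown, whereas you invoke the Chow-theoretic image formula recorded in Theorem~\ref{thm:glnchw} (proved in the earlier paper, hence not circular here) together with the commutativity of the fundamental square -- an equally valid justification of the same fact.
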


\begin{proof}
The claim follows from the computations below, cf. \cite[p. 288]{brown}:
\[
\rho(\op{p}_{2i+1})(\mathscr{E}_n) = \overline{\op{c}}_{4i+2}(\mathscr{E}_n\oplus\overline{\mathscr{E}_n}) = \overline{\op{c}}_{4i+2}(\mathscr{E}_n^{\oplus 2})= \overline{\op{c}}_{2i+1}(\mathscr{E}_n)^2. 
\]
\[
(\op{Sq}^2_{\mathscr{O}}(\overline{\op{c}}_{2i}))^2+ \rho(\op{p}_{2i})\op{Sq}^2_{\mathscr{O}}(\overline{\op{c}}_1) = (\overline{\op{c}}_{2i+1}+\overline{\op{c}}_1\overline{\op{c}}_{2i})^2+\overline{\op{c}}_{2i}^2\overline{\op{c}}_1^2=\overline{\op{c}}_{2i+1}^2.
\]
\[
\op{Sq}^2_{\mathscr{O}}(\overline{\op{c}}_{2i}\overline{\op{c}}_{2i+1})=\overline{\op{c}}_{2i}\op{Sq}^2_{\mathscr{O}}(\overline{\op{c}}_{2i+1})+\overline{\op{c}}_{2i+1}\op{Sq}^2_{\mathscr{O}}(\overline{\op{c}}_{2i})=\overline{\op{c}}_{2i+1}^2.\qedhere
\]
\end{proof}

\subsection{Proof of Theorem~\ref{thm:glnin}}
We first note that Proposition~\ref{prop:glnw}, in combination with Lemma~\ref{lem:wsplit}, a priori implies a splitting of $\mathbf{I}$-cohomology into $\mathbf{W}$-cohomology and the image of $\beta$, and this is the key tool in the proof. This already establishes part (3) of the theorem. 

Part (2) of the theorem follows from Lemma~\ref{prop:twistedsq2} for the Bockstein classes and \cite[Corollary 7.11]{chow-witt} for the Pontryagin and Euler classes.

To prove part (1) of the theorem, consider the ring homomorphism
\[
\theta_n\colon\mathscr{R}_n\to \bigoplus_{q,\mathscr{L}} \op{H}^q({\op{B}}\op{GL}_n,\mathbf{I}^q(\mathscr{L}))
\]
defined in Theorem~\ref{thm:glnin}. The first step is to show that $\theta_n$ factors through the quotient $\mathscr{R}_n/\mathscr{I}_n$, i.e., that $\theta_n(\mathscr{I}_n)=0$. We consider the relations generating $\mathscr{I}_n$ given in Definition~\ref{def:rngln}. Relations of type (1) hold in by Lemma~\ref{lem:baertorsion}, relations of type (2) by Proposition~\ref{prop:eulerrel}. Relations of type (3) are annihilated by the composition $\rho\circ\theta_n\colon \mathscr{R}_n\to \op{Ch}^\bullet({\op{B}}\op{GL}_n)$ by Proposition~\ref{prop:type3mod2}. By Proposition~\ref{prop:glnw}, the $\mathbf{W}$-cohomology of ${\op{B}}\op{GL}_n$ is free, hence Lemma~\ref{lem:wsplit} implies that the reduction $\rho\colon \op{H}^q({\op{B}}\op{GL}_n,\mathbf{I}^q(\mathscr{L}))\to \op{Ch}^q({\op{B}}\op{GL}_n)$ is injective on the image of $\beta_{\mathscr{L}}$. Since all relations of type (3) are in the image of $\beta_{\mathscr{L}}$, those relations have trivial image under $\theta_n$. Therefore, we get a well-defined ring homomorphism
\[
\overline{\theta}_n\colon\mathscr{R}_n/\mathscr{I}_n\to \bigoplus_{q,\mathscr{L}} \op{H}^q({\op{B}}\op{GL}_n,\mathbf{I}^q(\mathscr{L}))
\]

We now prove that the ring homomorphism $\overline{\theta}_n$ is surjective. First, we note that $\overline{\theta}_n$ surjects onto $\op{Im}\beta_{\mathscr{L}}$ if and only if the composition
\[
\rho\circ\overline{\theta}_n\colon \mathscr{R}_n/\mathscr{I}_n\to \bigoplus_{q,\mathscr{L}} \op{H}^q({\op{B}}\op{GL}_n,\mathbf{I}^q(\mathscr{L}))\to \op{Ch}^\bullet({\op{B}}\op{GL}_n) 
\]
surjects onto the image of $\op{Sq}^2_{\mathscr{L}}\colon\op{Ch}^{\bullet-1}({\op{B}}\op{GL}_n)\to \op{Ch}^\bullet({\op{B}}\op{GL}_n)$. By Corollary~\ref{cor:wuimage}, we know that the image of $\op{Sq}^2_{\mathscr{L}}$ is contained in the subring generated by the classes $\op{Sq}^2_{\mathscr{L}}(\overline{\op{c}}_{2j_1}\cdots\overline{\op{c}}_{2j_l})$, $\op{Sq}^2_{\det\gamma_n}(1)$, $\overline{\op{c}}_{2i}^2$ and $\overline{\op{c}}_n$. By part (2) of the theorem, all these classes are reductions of classes in the image of $\theta_n$, proving that $\overline{\theta}_n$ surjects onto the image of $\beta$. It then suffices to show that the composition
\[
 \mathscr{R}_n/\mathscr{I}_n\xrightarrow{\overline{\theta}_n} \bigoplus_{q,\mathscr{L}} \op{H}^q({\op{B}}\op{GL}_n,\mathbf{I}^q(\mathscr{L}))\to \bigoplus_{q,\mathscr{L}}\op{H}^q({\op{B}}\op{GL}_n,\mathbf{W}(\mathscr{L}))
\]
is surjective, where the second map is the projection onto $\mathbf{W}$-cohomology. But this follows from Proposition~\ref{prop:glnw}, finishing the surjectivity proof. 

Finally, we prove that $\overline{\theta}_n$ is injective. First, we consider the $\op{W}(F)$-torsion-free part of $\mathscr{R}_n/\mathscr{I}_n$ which is generated, as commutative graded $\op{W}(F)$-algebra, by the $P_i$, and $X_{2n}$ if applicable. The restriction of $\overline{\theta}_n$ to that subalgebra is injective by Proposition~\ref{prop:glnw}. The injectivity on the torsion part, i.e., the ideal generated by the classes $B_J$, $T_J$ for $J=\{j_1,\dots,j_l\}$ and $T_\emptyset$ can be checked after composition with $\rho$, by the decomposition of Lemma~\ref{lem:wsplit} (and Proposition~\ref{prop:glnw}) and the resulting fact that $\rho$ is injective on the image of $\beta$. The direct translation (replacing $\op{w}_i$ by $\overline{\op{c}}_i$ and $\op{Sq}^1$ by $\op{Sq}^2$) of the argument on p. 285 of \cite{cadek} takes care of that, cf. also \cite[Proposition 8.15]{chow-witt}.

\section{Chow--Witt rings of finite Grassmannians: statement of results}
\label{sec:chowwittgrass}

In the following two sections, we compute the Chow--Witt rings of the finite Grassmannians $\op{Gr}(k,n)$. The results are stated in the present section, and the proofs are deferred to the next section. 

\subsection{Generators from characteristic classes}

The first step is to get enough classes in $\widetilde{\op{CH}}^\bullet(\op{Gr}(k,n),\mathscr{L})$. We realize the Grassmannian $\op{Gr}(k,n)$ over the field $F$ as the variety of $k$-dimensional $F$-subspaces of $V=F^n$. Recall that we have an exact sequence of vector bundles on $\op{Gr}(k,n)$: 
\[
0\to \mathscr{S}_k\to \mathscr{O}_{\op{Gr}(k,n)}^{\oplus n}\to \mathscr{Q}_{n-k}\to 0,
\]
Here, $\mathscr{S}_k$ is the \emph{tautological subbundle}, mapping a point $[W]$ corresponding to a $k$-dimensional subspace $W\subset V$ to $W$, and $\mathscr{Q}_{n-k}$ is the \emph{tautological quotient bundle}, mapping a point $[W]$ to the quotient space $V/W$.

There is a vector bundle torsor  $f\colon \op{GL}_n/\op{GL}_k\times\op{GL}_{n-k}\to \op{Gr}(k,n)$ over the Grassmannian. This is an $\mathbb{A}^1$-weak equivalence, and the above exact sequence of vector bundles splits over $\op{GL}_n/\op{GL}_k\times\op{GL}_{n-k}$. Consequently, we obtain an $\mathbb{A}^1$-fiber sequence 
\[
\op{GL}_n/\op{GL}_k\times\op{GL}_{n-k}\to {\op{B}}\op{GL}_k\times{\op{B}}\op{GL}_{n-k}\xrightarrow{\oplus} {\op{B}}\op{GL}_n
\]
where the second map is the Whitney sum map and the first map classifies the pair $(f^\ast\mathscr{S}_k,f^\ast\mathscr{Q}_{n-k})$. We can also consider the map $c\colon\op{Gr}(k,n)\to {\op{B}}\op{GL}_k\times{\op{B}}\op{GL}_{n-k}$ obtained by composing a homotopy inverse of $f$ with the inclusion of the homotopy fiber, and this map classifies the pair $(\mathscr{S}_k,\mathscr{Q}_{n-k}$).

Note that there are two possible dualities on ${\op{B}}\op{GL}_k$, corresponding to the line bundles $\mathscr{O}$ and $\det\gamma_k^\vee$; and similarly there are two possible dualities on ${\op{B}}\op{GL}_{n-k}$ corresponding to $\mathscr{O}$ and $\det\gamma_{n-k}^\vee$. Consequently, there are four possible dualities on ${\op{B}}\op{GL}_k\times{\op{B}}\op{GL}_{n-k}$, given by the four possible exterior products of the above line bundles. For any choice of line bundles $\mathscr{L}_k$ and $\mathscr{L}_{n-k}$ on ${\op{B}}\op{GL}_k$ and ${\op{B}}\op{GL}_{n-k}$, respectively, the classifying map $c$ above induces homomorphisms of Chow--Witt groups
\[
\widetilde{\op{CH}}^\bullet({\op{B}}\op{GL}_k\times{\op{B}}\op{GL}_{n-k}, \mathscr{L}_k\boxtimes \mathscr{L}_{n-k}) \to \widetilde{\op{CH}}^\bullet(\op{Gr}(k,n),c^\ast(\mathscr{L}_k\boxtimes \mathscr{L}_{n-k})). 
\]
Note that the bundle $c^\ast(\mathscr{L}_k\boxtimes \mathscr{L}_{n-k})$ is trivial if and only if $\mathscr{L}_k$ and $\mathscr{L}_{n-k}$ are either both trivial or both non-trivial. This follows from the fact that the assignment $(\mathscr{L}_k,\mathscr{L}_{n-k})\mapsto c^\ast(\mathscr{L}_k\boxtimes \mathscr{L}_{n-k})$ can be computed by pulling back both line bundles to the Grassmannian and then taking the tensor product, hence it induces the addition 
\[
\mathbb{Z}/2\mathbb{Z}^{\oplus 2}\cong \op{Ch}^1({\op{B}}\op{GL}_k\times{\op{B}}\op{GL}_{n-k})\to \op{Ch}^1(\op{Gr}(k,n)) \cong \mathbb{Z}/2\mathbb{Z}.
\]  
The induced homomorphisms assemble into a ring homomorphism of the total Chow--Witt rings (to the extent that this makes sense, cf. the remarks on~\cite{lax:similitude} in Section~\ref{sec:recall}).

In particular, the images of the characteristic classes for vector bundles, cf. Section~\ref{sec:vbclasses} and in particular Theorem~\ref{thm:glnin} resp. the main result Theorem~\ref{thm:main1}, induce classes in the Chow--Witt ring of $\op{Gr}(k,n)$. The characteristic classes for the tautological subbundle $\mathscr{S}_k$ are 
\begin{enumerate}
\item the Pontryagin classes $\op{p}_1,\op{p}_2,\dots,\op{p}_{k-1}$,
\item the Euler class $\op{e}_k$,
\item the (twisted) Bockstein classes $\beta_{\mathscr{O}}(\overline{\op{c}}_{2j_1}\cdots\overline{\op{c}}_{2j_l})$ and $\beta_{\det\gamma_k^\vee}(\overline{\op{c}}_{2j_1}\cdots\overline{\op{c}}_{2j_l})$, and 
\item the Chern classes $\op{c}_i$.
\end{enumerate}
Similarly, for the tautological quotient bundle $\mathscr{Q}_{n-k}$, we have the same characteristic classes (with different index sets); these will be denoted by an additional superscript $(-)^\perp$.\footnote{The notation is suggestive that $\mathscr{Q}_{n-k}$ is the complement of $\mathscr{S}_k$ in $\mathscr{O}^{\oplus n}$, after pulling back to $\op{GL}_n/\op{GL}_k\times\op{GL}_{n-k}$.} This provides a number of canonical elements in $\widetilde{\op{CH}}^\bullet(\op{Gr}(k,n),\mathscr{L})$. It turns out that in the cases where $\dim\op{Gr}(k,n)=k(n-k)$ is even, these classes generate the Chow--Witt ring; in the case where the dimension is odd, there is essentially one additional class arising as lift of an Euler class.

\begin{remark}
Note that we follow the convention of \cite[Remark 5.7]{chow-witt}, including all Pontryagin classes without added signs or reindexing. While the odd Pontryagin classes are $\op{I}(F)$-torsion and can be expressed in terms of Bockstein classes, this convention makes the Whitney sum formula for Pontryagin classes easier to state, cf. Proposition~\ref{prop:pwhitney} and the subsequent remark.
\end{remark}

\subsection{Chow rings of Grassmannians}

Before giving the statement concerning the structure of the Chow--Witt rings, we discuss the Chow rings of the Grassmannians. This result is very well-known and can be found in the relevant books on intersection theory, such as \cite{3264}. 

\begin{proposition}
\label{prop:chowgrass}
Let $F$ be an arbitrary field. Let $\op{c}_1,\dots,\op{c}_k$ be the Chern classes of the tautological rank $k$ subbundle, and let $\op{c}^\perp_1,\dots,\op{c}^\perp_{n-k}$ be the Chern classes of the tautological rank $(n-k)$ quotient bundle. Then there is a canonical isomorphism
\[
\op{CH}^\bullet(\op{Gr}(k,n))\cong\mathbb{Z}[\op{c}_1,\dots,\op{c}_k,\op{c}^\perp_1,\dots,\op{c}^\perp_{n-k}]/(\op{c}\cdot\op{c}^\perp=1),
\]
where $\op{c}=\sum_{i=0}^k\op{c}_i$ and $\op{c}^\perp=\sum_{i=0}^{n-k}\op{c}_i^\perp$ are the total Chern classes.
\end{proposition}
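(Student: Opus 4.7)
The plan is to combine the Whitney sum formula (which provides the relations) with a dimension/rank count via Schubert cells (which shows the presentation is complete). First, I would observe that the classes $\op{c}_i$ and $\op{c}_j^\perp$ are obtained by pulling back the universal Chern classes via the classifying map $c\colon \op{Gr}(k,n)\to {\op{B}}\op{GL}_k\times {\op{B}}\op{GL}_{n-k}$ of the pair $(\mathscr{S}_k,\mathscr{Q}_{n-k})$. Applying the Whitney sum formula of Proposition~\ref{prop:chern} to the short exact sequence $0\to\mathscr{S}_k\to \mathscr{O}^{\oplus n}\to\mathscr{Q}_{n-k}\to 0$ and using that $\op{c}(\mathscr{O}^{\oplus n})=1$ yields $\op{c}\cdot \op{c}^\perp=1$ in $\op{CH}^\bullet(\op{Gr}(k,n))$. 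This produces a well-defined ring homomorphism
\[
\Psi\colon \mathbb{Z}[\op{c}_1,\dots,\op{c}_k,\op{c}^\perp_1,\dots,\op{c}^\perp_{n-k}]/(\op{c}\cdot\op{c}^\perp=1)\to \op{CH}^\bullet(\op{Gr}(k,n)).
\]

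Next, I would establish surjectivity of $\Psi$ and compute the rank of $\op{CH}^\bullet(\op{Gr}(k,n))$ by means of the Schubert cell decomposition. The Grassmannian admits a stratification by Schubert cells $X_\lambda^\circ$ indexed by partitions $\lambda\subseteq (n-k)^k$, each of which is isomorphic to an affine space. Iterating the localization sequence for Chow groups along this stratification then shows that $\op{CH}^\bullet(\op{Gr}(k,n))$ is a free abelian group on the classes of the Schubert varieties $[X_\lambda]$, so of total rank $\binom{n}{k}$. Surjectivity of $\Psi$ then follows from the classical Giambelli-type formula expressing each Schubert class $[X_\lambda]$ as a determinant in the $\op{c}_j^\perp$ (or equivalently in the $\op{c}_i$ via the relation $\op{c}\cdot\op{c}^\perp=1$).

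Finally, to upgrade surjectivity to an isomorphism, I would show that the presentation on the left has rank at most $\binom{n}{k}$ in each degree, with total rank exactly $\binom{n}{k}$. Concretely, the relation $\op{c}\cdot\op{c}^\perp=1$ recursively determines the $\op{c}_j^\perp$ in terms of the $\op{c}_i$ (by truncating the formal inverse of $\op{c}$), so the quotient ring is generated as a $\mathbb{Z}$-module by monomials in $\op{c}_1,\dots,\op{c}_k$. The additional relations coming from the vanishing of the components of $\op{c}^{-1}$ in degrees $>n-k$ then cut this down to a free module spanned by Schur polynomials indexed by partitions $\lambda\subseteq (n-k)^k$, matching the Schubert basis. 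Comparing ranks shows $\Psi$ is an isomorphism.

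The main technical obstacle is the rank computation of the presentation: one needs to argue cleanly that, modulo the degree $>n-k$ components of $\op{c}\cdot \op{c}^\perp-1$, the Schur polynomials $s_\lambda(\op{c})$ for $\lambda\subseteq (n-k)^k$ form a $\mathbb{Z}$-basis. This can either be done directly via Jacobi--Trudi and a Gröbner-type argument, or sidestepped by comparing with the Schubert basis via Giambelli and invoking freeness from the cell decomposition. Since the statement is classical, I would simply refer to \cite{3264} for the detailed combinatorial verification.
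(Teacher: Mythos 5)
The paper offers no proof of this proposition at all—it records the classical statement and refers to \cite{3264}—and your sketch is exactly the standard argument from that source: the Whitney sum relation coming from the tautological short exact sequence, generation and rank $\binom{n}{k}$ from the Schubert cell decomposition, surjectivity via Giambelli, and a rank count for the presentation via Schur polynomials. This is correct; the only point to phrase carefully is that iterating the localization sequence only gives that the Schubert classes generate, with their linear independence (hence freeness) coming from the intersection pairing with the opposite Schubert varieties, a detail handled in the reference you (and the paper) cite.
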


Note that the relation $\op{c}\cdot\op{c}^\perp=1$ is very natural: $\op{c}$ is the total Chern class of the tautological bundle $\mathscr{S}_k$, $\op{c}^\perp$ is the total Chern class of  $\mathscr{Q}_{n-k}$, and the relation $\op{c}\cdot\op{c}^\perp=1$ is just the Whitney sum formula for the extension
\[
0\to \mathscr{S}_k\to \mathscr{O}_{\op{Gr}(k,n)}^{\oplus n}\to \mathscr{Q}_{n-k}\to 0.
\]


\begin{proposition}
\label{cor:chowmod2grass}
Let $F$ be an arbitrary field. Let $\overline{\op{c}}_1,\dots,\overline{\op{c}}_k$ be the Stiefel--Whitney classes of the tautological rank $k$ bundle, and let $\overline{\op{c}}^\perp_1,\dots,\overline{\op{c}}^\perp_{n-k}$ be the Stiefel--Whitney classes of the tautological rank $(n-k)$ quotient bundle. 
\begin{enumerate}
\item  There is a canonical isomorphism
\[
\op{Ch}^\bullet(\op{Gr}(k,n))\cong\mathbb{Z}/2\mathbb{Z}[\overline{\op{c}}_1,\dots,\overline{\op{c}}_k,\overline{\op{c}}^\perp_1,\dots,\overline{\op{c}}^\perp_{n-k}]/(\overline{\op{c}}\cdot\overline{\op{c}}^\perp=1).
\]
where $\overline{\op{c}}=\sum_{i=0}^k\overline{\op{c}}_i$ and $\overline{\op{c}}^\perp=\sum_{i=0}^{n-k}\overline{\op{c}}_i^\perp$ are the total Stiefel--Whitney classes.
\item 
The Steenrod square is given by 
\[
\op{Sq}^2_{\mathscr{O}}\colon \op{Ch}^\bullet(\op{Gr}(k,n))\to \op{Ch}^\bullet(\op{Gr}(k,n))\colon  \overline{\op{c}}^{(\perp)}_{j}\mapsto \overline{\op{c}}_1^{(\perp)} \overline{\op{c}}^{(\perp)}_{j} + (j-1) \overline{\op{c}}_{j+1}^{(\perp)}
\]
\item 
The twisted Steenrod square is given by 
\[
\op{Sq}^2_{\det\mathscr{S}_k^\vee}\colon \op{Ch}^\bullet(\op{Gr}(k,n))\to \op{Ch}^\bullet(\op{Gr}(k,n))\colon  \overline{\op{c}}^{(\perp)}_{j}\mapsto (j-1)\overline{\op{c}}_{j+1}^{(\perp)}.
\]
\end{enumerate}
\end{proposition}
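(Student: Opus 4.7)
The plan is to deduce all three parts by pulling back known facts along the classifying maps $c_{\mathscr{S}_k}\colon \op{Gr}(k,n)\to {\op{B}}\op{GL}_k$ and $c_{\mathscr{Q}_{n-k}}\colon \op{Gr}(k,n)\to {\op{B}}\op{GL}_{n-k}$ of the tautological sub- and quotient bundles; these were set up in the preceding subsection via the $\mathbb{A}^1$-fiber sequence coming from the Whitney sum map.

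For part (1), I would reduce Proposition~\ref{prop:chowgrass} modulo $2$. The mod $2$ reductions of the Chern classes $\op{c}_i$ and $\op{c}_i^\perp$ are the Stiefel--Whitney classes $\overline{\op{c}}_i$ and $\overline{\op{c}}_i^\perp$ (cf.\ Proposition~\ref{prop:sw} together with naturality of reduction under $c_{\mathscr{S}_k}$ and $c_{\mathscr{Q}_{n-k}}$). Reducing the defining relation $\op{c}\cdot\op{c}^\perp=1$ yields $\overline{\op{c}}\cdot\overline{\op{c}}^\perp=1$, so the presentation carries over verbatim.

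For part (2), by naturality of the Steenrod square and of Stiefel--Whitney classes, the Wu formula on ${\op{B}}\op{GL}_k$ (Proposition~\ref{prop:wuformula}) pulls back along $c_{\mathscr{S}_k}$ to give the claimed formula on the generators $\overline{\op{c}}_j$; the same argument via $c_{\mathscr{Q}_{n-k}}$ handles the perpendicular generators $\overline{\op{c}}_j^\perp$. Because $\op{Sq}^2_{\mathscr{O}}$ is a derivation and the presentation in (1) writes every class as a polynomial in the generators, this determines $\op{Sq}^2_{\mathscr{O}}$ on the whole ring.

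For part (3), I would invoke the identity $\op{Sq}^2_{\mathscr{L}}(x) = \overline{\op{c}}_1(\mathscr{L})\cdot x + \op{Sq}^2_{\mathscr{O}}(x)$ from Proposition~\ref{prop:twistedsq2}. Since $\overline{\op{c}}_1(\det\mathscr{S}_k^\vee)=\overline{\op{c}}_1$, applying this with $x=\overline{\op{c}}_j$ and the formula from (2) gives $\overline{\op{c}}_1\overline{\op{c}}_j+\overline{\op{c}}_1\overline{\op{c}}_j+(j-1)\overline{\op{c}}_{j+1}=(j-1)\overline{\op{c}}_{j+1}$, as desired. Applied instead to $\overline{\op{c}}_j^\perp$, the same substitution gives $\overline{\op{c}}_1\overline{\op{c}}_j^\perp+\overline{\op{c}}_1^\perp\overline{\op{c}}_j^\perp+(j-1)\overline{\op{c}}_{j+1}^\perp$, and the cancellation works precisely because of the identity $\overline{\op{c}}_1=\overline{\op{c}}_1^\perp$ in $\op{Ch}^1(\op{Gr}(k,n))$, which is the degree-$1$ piece of the Whitney sum relation $\overline{\op{c}}\cdot\overline{\op{c}}^\perp=1$ from (1). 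The only subtle point is this swap between the two first Stiefel--Whitney classes; everything else reduces to already-established facts about ${\op{B}}\op{GL}_n$ and the Whitney sum extension.
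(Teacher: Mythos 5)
Your argument is correct and is essentially the derivation the paper leaves implicit (the proposition is stated without proof, as a standard consequence of the preceding material): part (1) is the mod-2 reduction of Proposition~\ref{prop:chowgrass}, while parts (2)--(3) follow by pulling back the Wu formula of Proposition~\ref{prop:wuformula} along the classifying maps of $\mathscr{S}_k$ and $\mathscr{Q}_{n-k}$ and then applying Proposition~\ref{prop:twistedsq2}. The one point genuinely specific to the Grassmannian, which you correctly identify, is the cancellation for the quotient-bundle classes in (3) via the degree-one Whitney relation $\overline{\op{c}}_1=\overline{\op{c}}_1^\perp$.
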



The following consequence of the description of the Chow ring given in Proposition~\ref{cor:chowmod2grass} will be relevant later. 

\begin{proposition}
\label{prop:kercokerchowmod2}
Let $1\leq k<n$ and consider the ring  
\[
A=\mathbb{Z}/2\mathbb{Z}[\overline{\op{c}}_1,\dots,\overline{\op{c}}_k,\overline{\op{c}}^\perp_1,\dots,\overline{\op{c}}^\perp_{n-k}]/(\overline{\op{c}}\cdot\overline{\op{c}}^\perp=1). 
\]
\begin{enumerate}
\item The kernel of the multiplication by $\overline{\op{c}}_{n-k}^\perp$ is the ideal $\langle \overline{\op{c}}_k\rangle\subseteq A$.
\item The cokernel of the multiplication by $\overline{\op{c}}_{n-k}^\perp$ is 
\[
A/\langle \overline{\op{c}}_{n-k}^\perp\rangle\cong \mathbb{Z}/2\mathbb{Z}[\overline{\op{c}}_1,\dots,\overline{\op{c}}_k,\overline{\op{c}}^\perp_1,\dots,\overline{\op{c}}^\perp_{n-k-1}]/(\overline{\op{c}}\cdot\overline{\op{c}}^\perp=1).
\]
\end{enumerate}
\end{proposition}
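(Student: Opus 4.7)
The plan is to prove part (2) first by direct inspection of the presentation in Proposition~\ref{cor:chowmod2grass}, and then to deduce (1) by a dimension count in each graded degree, after establishing the analogous identification of $A/\langle \overline{\op{c}}_k \rangle$.

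For (2), setting $\overline{\op{c}}^\perp_{n-k} = 0$ in $A$ makes the degree-$n$ component of the defining relation $\overline{\op{c}} \cdot \overline{\op{c}}^\perp = 1$, which is just the single equation $\overline{\op{c}}_k \overline{\op{c}}^\perp_{n-k} = 0$, tautological. The remaining components in degrees $1, \dots, n-1$ involve only the surviving generators $\overline{\op{c}}_1, \dots, \overline{\op{c}}_k, \overline{\op{c}}^\perp_1, \dots, \overline{\op{c}}^\perp_{n-k-1}$ and together encode the total-Chern-class relation $\overline{\op{c}} \cdot (1 + \overline{\op{c}}^\perp_1 + \cdots + \overline{\op{c}}^\perp_{n-k-1}) = 1$. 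By Proposition~\ref{cor:chowmod2grass} applied to the pair $(k, n-1)$, this is precisely the claimed presentation of $\op{Ch}^\bullet(\op{Gr}(k, n-1);\mathbb{Z}/2\mathbb{Z})$.

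For part (1), the inclusion $\langle \overline{\op{c}}_k \rangle \subseteq \ker(\cdot \overline{\op{c}}^\perp_{n-k})$ is immediate from the same identity $\overline{\op{c}}_k \overline{\op{c}}^\perp_{n-k} = 0$ extracted from the top-degree component of the Whitney-sum relation. For the reverse inclusion I would first establish, by the same inspection but with the roles of $\overline{\op{c}}_k$ and $\overline{\op{c}}^\perp_{n-k}$ swapped, a ring isomorphism $A/\langle \overline{\op{c}}_k \rangle \cong \op{Ch}^\bullet(\op{Gr}(k-1, n-1);\mathbb{Z}/2\mathbb{Z})$, and then compare the $\mathbb{Z}/2\mathbb{Z}$-dimensions of $\ker(\cdot \overline{\op{c}}^\perp_{n-k})_d$ and $\langle \overline{\op{c}}_k \rangle_d$ in each graded degree. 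Using the Schubert basis, $\dim_{\mathbb{Z}/2\mathbb{Z}} A_d$ is equal to the number of partitions in a $k\times(n-k)$ box of weight $d$.

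Combining the formula $\dim \langle \overline{\op{c}}_k \rangle_d = \dim A_d - \dim (A/\langle \overline{\op{c}}_k \rangle)_d$ with the rank-nullity relation
\[
\dim \ker(\cdot \overline{\op{c}}^\perp_{n-k})_d = \dim A_d - \dim A_{d+n-k} + \dim (A/\langle \overline{\op{c}}^\perp_{n-k} \rangle)_{d+n-k}
\]
extracted from the exact sequence $0 \to \ker(\cdot \overline{\op{c}}^\perp_{n-k}) \to A \xrightarrow{\cdot \overline{\op{c}}^\perp_{n-k}} A \to A/\langle \overline{\op{c}}^\perp_{n-k}\rangle \to 0$, and using part (2) together with its analogue for $A/\langle \overline{\op{c}}_k\rangle$, equality of the two dimensions reduces to the combinatorial identity that partitions in a $k\times(n-k)$ box of weight $d+n-k$ split (according to whether $\lambda_1 < n-k$ or $\lambda_1 = n-k$) as the disjoint union of partitions in a $k \times (n-k-1)$ box of weight $d+n-k$ and partitions in a $(k-1)\times(n-k)$ box of weight $d$ (the latter obtained by removing the first row). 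This identity is elementary. The main obstacle of the argument is the careful bookkeeping of the various degree shifts and the verification that the two cokernel identifications match the bookkeeping; once those are in hand, the combinatorics is immediate.
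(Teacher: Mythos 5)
Your proposal is correct and follows essentially the same route as the paper: part (2) by direct inspection of the presentation, the inclusion $\langle\overline{\op{c}}_k\rangle\subseteq\ker(\cdot\,\overline{\op{c}}^\perp_{n-k})$ from the Whitney relation, and the reverse inclusion by a dimension count in the kernel--cokernel exact sequence for multiplication by $\overline{\op{c}}^\perp_{n-k}$. You merely make explicit the Schubert-basis/partition bookkeeping that the paper leaves to the reader, and that bookkeeping checks out.
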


\begin{proof}
(2) The statement about the cokernel being $A/\langle\overline{\op{c}}_{n-k}^\perp\rangle$ is clear. The explicit description of the algebra also follows directly; note that the relation $\overline{\op{c}}\cdot\overline{\op{c}}^\perp=1$ refers to the total class $\overline{\op{c}}^\perp=1+\overline{\op{c}}_1^\perp+\cdots+\overline{\op{c}}_{n-k-1}^\perp$. 

(1) Clearly, $\langle\overline{\op{c}}_k\rangle\subseteq \ker \overline{\op{c}}_{n-k}^\perp$ since $\overline{\op{c}}_k\overline{\op{c}}_{n-k}^\perp=0$ follows from the Whitney sum relation. The reverse inclusion can be seen e.g. by a dimension count in the kernel-cokernel exact sequence for multiplication by $\overline{\op{c}}_{n-k}^\perp$. 
\end{proof}

\begin{remark}
This is also the formula for the mod 2 cohomology of the real Grassmannians, cf. e.g. \cite{milnor:stasheff}. The notation for the classes $\overline{\op{c}}_i$ and $\overline{\op{c}}^\perp_i$ is due to the fact that these are the mod 2 reductions of the Chern classes from the Chow ring. In the real realization these would go exactly to the corresponding Stiefel--Whitney classes. \end{remark}

\subsection{Statement of the main results}

Now we are ready to state the main results describing the $\mathbf{I}^\bullet$-cohomology of the finite Grassmannians. The lengthy formulation boils down to ``the characteristic classes of the tautological bundle and its complement generate the cohomology (except for a new class in when $k(n-k)$ is odd) and the only new relations come from the Whitney sum formula''. 

\begin{theorem}
\label{thm:ingrass}
Let $F$ be a perfect field of characteristic $\neq 2$, and let $1\leq k< n$. The cohomology ring $\bigoplus_q\op{H}^q(\op{Gr}(k,n),\mathbf{I}^q\oplus\mathbf{I}^q(\det\mathscr{S}_k^\vee))$ is isomorphic to the $\mathbb{Z}\oplus\mathbb{Z}/2\mathbb{Z}$-graded $\op{W}(F)$-algebra generated by 
\begin{enumerate}
\item the Pontryagin classes $\op{p}_1,\op{p}_2,\dots,\op{p}_{k}$ of the tautological rank $k$ subbundle and the Pontryagin classes $\op{p}^\perp_1, \op{p}_2^\perp, \dots, \op{p}_{n-k}^\perp$ of the tautological rank $(n-k)$ quotient bundle, where the class $\op{p}_{i}^{(\perp)}$ sits in degree $(2i,0)$,
\item the Euler classes $\op{e}_k$ and $\op{e}_{n-k}^\perp$, sitting in degrees $(k,1)$ and $(n-k,1)$, respectively,
\item for every set $J=\{j_1,\dots,j_l\}$ of natural numbers $0<j_1<\cdots<j_l\leq[(k-1)/2]$, possibly empty, there are Bockstein classes $\beta_J=\beta_{\mathscr{O}}(\overline{c}_{2j_1}\cdots\overline{c}_{2j_l})$ and $\tau_J=\beta_{\det\mathscr{S}_k}(\overline{c}_{2j_1}\cdots\overline{c}_{2j_l})$ in degrees $(d,0)$ and $(d,1)$, respectively, where $d=1+2\sum_{i=1}^lj_i$, 
\item for every set $J=\{j_1,\dots,j_l\}$ of natural numbers $0<j_1<\cdots<j_l\leq[(n-k-1)/2]$, possibly empty, there are Bockstein classes $\beta^\perp_J=\beta_{\mathscr{O}}(\overline{c}^\perp_{2j_1}\cdots\overline{c}^\perp_{2j_l})$ and $\tau^\perp_J=\beta_{\det\mathscr{S}_k}(\overline{c}^\perp_{2j_1}\cdots\overline{c}^\perp_{2j_l})$ in degrees $(d,0)$ and $(d,1)$, respectively, where $d=1+2\sum_{i=1}^lj_i$, 
\item if $k(n-k)$ is odd, we have a class $\op{R}$ in degree $(n-1,0)$
\end{enumerate}
subject to the following relations:
\begin{enumerate}
\item the classes $\op{p}_i,\op{e}_k$, $\beta_J$ and $\tau_J$ satisfy the relations holding in $\op{H}^\bullet({\op{B}}\op{GL}_k)$; the classes $\op{p}_i^\perp,\op{e}_{n-k}^\perp$, $\beta^\perp_J$ and $\tau^\perp_J$ satisfy the relations in $\op{H}^\bullet({\op{B}}\op{GL}_{n-k})$, cf. Theorem~\ref{thm:glnin}.
\item $\op{p}\cdot\op{p}^\perp=1$, i.e., the product of the total Pontryagin classes is $1$.
\item $\op{e}_k\cdot\op{e}_{n-k}^\perp=0$. 
\item $\beta(\overline{\op{c}}\cdot\overline{\op{c}}^\perp)=1$ and $\tau(\overline{\op{c}}\cdot\overline{\op{c}}^\perp)=\tau_\emptyset=\tau_\emptyset^\perp$, i.e., applying the (twisted) Bockstein to the product of the total Stiefel--Whitney classes in $\op{Ch}^\bullet$ is trivial.
\item $\op{R}^2=0$, and the product of $\op{R}$ with an $\op{I}(F)$-torsion class $\alpha$ is zero if and only if $\overline{\op{c}}_{k-1}\overline{\op{c}}_{n-k}^\perp\rho(\alpha)=0$.
\end{enumerate}
\end{theorem}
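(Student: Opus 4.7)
The plan is to apply the decomposition of Lemma~\ref{lem:wsplit}: compute the $\mathbf{W}$-cohomology of $\op{Gr}(k,n)$ independently, verify it is free as a $\op{W}(F)$-module, and then read off the image of $\beta_{\mathscr{L}}$ from its reduction to mod $2$ Chow theory. Once both summands are identified, the multiplicative structure on the torsion part is forced by injectivity of $\rho$ on $\op{Im}\beta_{\mathscr{L}}$, while the $\mathbf{W}$-part inherits its ring structure directly from the $\mathbf{W}$-cohomology computation.

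First, using the classifying map $c\colon\op{Gr}(k,n)\to{\op{B}}\op{GL}_k\times{\op{B}}\op{GL}_{n-k}$ together with Theorem~\ref{thm:glnin}, I would pull back the candidate generators to produce classes $\op{p}_i$, $\op{p}_i^\perp$, $\op{e}_k$, $\op{e}_{n-k}^\perp$, $\beta_J$, $\tau_J$, $\beta_J^\perp$, $\tau_J^\perp$ in the $\mathbf{I}^\bullet$-cohomology of $\op{Gr}(k,n)$, with relations of type (1) following by naturality. Relations (2)--(4) are all instances of the Whitney sum formula applied to the tautological exact sequence
\[
0\to\mathscr{S}_k\to\mathscr{O}^{\oplus n}\to\mathscr{Q}_{n-k}\to 0.
\]
The Pontryagin classes of a trivial bundle vanish in positive degree, forcing $\op{p}\cdot\op{p}^\perp=1$; the Euler class of $\mathscr{O}^{\oplus n}$ vanishes, giving $\op{e}_k\cdot\op{e}_{n-k}^\perp=0$; and the Whitney sum identity $\overline{\op{c}}\cdot\overline{\op{c}}^\perp=1$ in $\op{Ch}^\bullet$, combined with the injectivity of $\rho$ on $\op{Im}\beta_{\mathscr{L}}$, yields the Bockstein relations (4).

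Second, the $\mathbf{W}$-cohomology of $\op{Gr}(k,n)$ can be computed inductively via a suitable localization sequence. Fixing a line $L\subset F^n$ yields a closed inclusion $\op{Gr}(k-1,n-1)\hookrightarrow\op{Gr}(k,n)$ by $W'\mapsto W'\oplus L$, whose open complement is an affine bundle over $\op{Gr}(k,n-1)$ (via projection away from $L$). The corresponding long exact sequence in $\mathbf{W}$-cohomology involves no index shift and permits a direct induction along the lines of Proposition~\ref{prop:glnw}, closely paralleling the classical computation of rational cohomology of real Grassmannians \cite{sadykov, milnor:stasheff}. The outcome is a presentation as a $\op{W}(F)$-algebra by the Pontryagin and Euler classes modulo $\op{p}\cdot\op{p}^\perp=1$ and $\op{e}_k\cdot\op{e}_{n-k}^\perp=0$, which is free as a $\op{W}(F)$-module so that Lemma~\ref{lem:wsplit} applies. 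When $k(n-k)$ is odd, both Euler classes become $\op{I}(F)$-torsion by Proposition~\ref{prop:eulerrel}, so an extra free generator $\op{R}$ in degree $n-1$ must appear to account for the contribution of the top Schubert cell, in analogy with the top fundamental class in the rational cohomology of a real Grassmannian of odd dimension. Image of $\beta_{\mathscr{L}}$ is then identified via the Wu formula of Proposition~\ref{cor:chowmod2grass} and an argument parallel to Corollary~\ref{cor:wuimage}: the image of $\op{Sq}^2_{\mathscr{L}}$ on $\op{Ch}^\bullet(\op{Gr}(k,n))$ is generated by squares of even Stiefel--Whitney classes, odd Stiefel--Whitney classes, the top classes $\overline{\op{c}}_k$ and $\overline{\op{c}}_{n-k}^\perp$, and mod $2$ reductions of Bockstein classes, all of which match the reductions of the generators listed in the theorem. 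By injectivity of $\rho$ on $\op{Im}\beta_{\mathscr{L}}$, all multiplicative relations among the $\beta_J^{(\perp)}$ and $\tau_J^{(\perp)}$ can be verified after reduction to $\op{Ch}^\bullet(\op{Gr}(k,n))$, where they follow from the relations of Theorem~\ref{thm:glnin} combined with $\overline{\op{c}}\cdot\overline{\op{c}}^\perp=1$.

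The main obstacle is relation (5) governing the class $\op{R}$ when $k(n-k)$ is odd. Since $\op{R}$ is not a characteristic class but rather a free $\mathbf{W}$-cohomology generator produced by the boundary map in the localization step, its reduction $\rho(\op{R})=\overline{\op{c}}_{k-1}\overline{\op{c}}_{n-k}^\perp$ must be pinned down by tracking that boundary through the induction carefully. Once the identification of $\rho(\op{R})$ is in hand, the vanishing $\op{R}^2=0$ follows from degree considerations in $\mathbf{W}$-cohomology, and the criterion for $\op{R}\cdot\alpha=0$ on an $\op{I}(F)$-torsion class $\alpha$ follows by combining the injectivity of $\rho$ on $\op{Im}\beta_{\mathscr{L}}$ (Lemma~\ref{lem:wsplit}) with the explicit kernel computation for multiplication by $\overline{\op{c}}_{k-1}\overline{\op{c}}_{n-k}^\perp$ in the mod $2$ Chow ring, available from Proposition~\ref{prop:kercokerchowmod2}.
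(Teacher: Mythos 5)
Your overall architecture coincides with the paper's: decompose $\mathbf{I}$-cohomology as $\op{Im}\beta_{\mathscr{L}}\oplus\mathbf{W}$-cohomology via Lemma~\ref{lem:wsplit}, treat the torsion part through the injectivity of $\rho$ on $\op{Im}\beta_{\mathscr{L}}$ and the mod $2$ Chow ring, and obtain relations (1)--(4) by pulling back the relations of Theorem~\ref{thm:glnin} and the Whitney sum formulas along the classifying map. Where you genuinely deviate is the $\mathbf{W}$-cohomology computation: you propose the stratification with closed stratum $\op{Gr}(k-1,n-1)$ (the $k$-planes containing a fixed line $L$) and open complement an affine bundle over $\op{Gr}(k,n-1)$, an induction on $n$, whereas the paper (Propositions~\ref{prop:geomgrass} and \ref{prop:locgrass}, Theorem~\ref{thm:invertedeta}) follows Sadykov and compares $\op{Gr}(k-1,n)$ with $\op{Gr}(k,n)$ through their common sphere bundle $\mathscr{S}(k,n)$, an induction on $k$. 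Your sequence is legitimate --- the closed stratum is the zero locus of a regular section of $\mathscr{Q}_{n-k}$, so the composite $i_\ast i^\ast$ is essentially multiplication by $\op{e}_{n-k}^\perp$ --- and it would give a perhaps more familiar Schubert-type recursion; but it is not cheaper, since the same kind of kernel/cokernel analysis for multiplication by Euler classes (the analogue of Proposition~\ref{prop:kercokereulereta}), the same identification of connecting maps, and the same parity case distinctions have to be carried out.

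This is exactly where your proposal has a genuine gap: the $\mathbf{W}$-cohomology computation, which is the technical core of the theorem, is asserted rather than proved. You claim the sequence ``permits a direct induction'' yielding the stated presentation (hence freeness), but you do not identify the boundary or pushforward maps, do not derive the parity-dependent case analysis, and you justify the existence of $\op{R}$ only by saying it accounts for ``the top Schubert cell, in analogy with the top fundamental class'' --- which is not correct as stated: $\op{R}$ sits in degree $(n-1,0)$, not in the top degree $k(n-k)$, and is not a fundamental class; in the paper it appears as a $\partial$-preimage of the Euler class $\op{e}_{k-1}$ in the sphere-bundle sequence, and the existence of such a lift is an output of the boundary-map analysis, not an a priori fact. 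Likewise the identification $\rho(\op{R})=\overline{\op{c}}_{k-1}\overline{\op{c}}_{n-k}^\perp$, which you rightly flag as the main obstacle, is precisely such a boundary computation and is left undone; without it relation (5) and the surjectivity of your candidate presentation onto the torsion part are incomplete. A smaller slip: Proposition~\ref{prop:kercokerchowmod2} computes the kernel of multiplication by $\overline{\op{c}}_{n-k}^\perp$, not by $\overline{\op{c}}_{k-1}\overline{\op{c}}_{n-k}^\perp$; in fact no kernel computation is needed for (5), since once $\rho(\op{R})$ is known the stated criterion follows immediately from the injectivity of $\rho$ on the torsion part, exactly as in the paper.
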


\begin{remark}
Note that we include all Pontryagin classes because this makes the relations from the Whitney sum formula easier to state. 
\end{remark}


\begin{proposition}
\label{prop:inredgrass}
Let $F$ be a perfect field of characteristic $\neq 2$, and let $1\leq k< n$. The reduction homomorphism 
\[
\rho\colon \bigoplus_q\op{H}^q(\op{Gr}(k,n),\mathbf{I}^q\oplus\mathbf{I}^q(\det\mathscr{S}_k)) \to \op{Ch}^q(\op{Gr}(k,n))
\]
is given by 
\begin{eqnarray*}
\op{p}_{2i}^{(\perp)}&\mapsto& (\overline{\op{c}}^{(\perp)}_{2i})^2, \\
\beta_{\mathscr{L}}(\overline{\op{c}}^{(\perp)}_{2j_1}\cdots \overline{\op{c}}^{(\perp)}_{2j_l})&\mapsto&  \op{Sq}^2_{\mathscr{L}}(\overline{\op{c}}^{(\perp)}_{2j_1}\cdots \overline{\op{c}}^{(\perp)}_{2j_l}), \\ \op{e}_k&\mapsto& \overline{\op{c}}_k,\\ \op{e}_{n-k}^\perp&\mapsto& \overline{\op{c}}_{n-k}^\perp,\\
\op{R}&\mapsto & \overline{\op{c}}_{k-1}\overline{\op{c}}_{n-k}^\perp=\overline{\op{c}}_k\overline{\op{c}}_{n-k-1}^\perp.
\end{eqnarray*}
The reduction homomorphism $\rho_{\mathscr{L}}$ is injective on the image of the Bockstein map $\beta_{\mathscr{L}}$. 
\end{proposition}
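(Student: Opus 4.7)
The proof splits naturally into three parts. First, the formulas for $\rho$ on the Pontryagin, Euler and Bockstein classes of $\mathscr{S}_k$ and $\mathscr{Q}_{n-k}$ are formal consequences of Theorem~\ref{thm:glnin}(2): each such class is the pullback along the classifying map $c\colon\op{Gr}(k,n)\to {\op{B}}\op{GL}_k\times {\op{B}}\op{GL}_{n-k}$ of the corresponding universal characteristic class for $\gamma_k$ or $\gamma_{n-k}$, and the reduction morphism $\rho$ is induced by a morphism of (twisted) Nisnevich sheaves and so commutes with pullback. The identity $\overline{\op{c}}_{k-1}\overline{\op{c}}_{n-k}^\perp=\overline{\op{c}}_k\overline{\op{c}}_{n-k-1}^\perp$ in $\op{Ch}^\bullet(\op{Gr}(k,n))$ is then simply the degree-$(n-1)$ piece of the Whitney relation $\overline{\op{c}}\cdot\overline{\op{c}}^\perp=1$ from Proposition~\ref{cor:chowmod2grass}.

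Second, the injectivity of $\rho$ on the image of $\beta_{\mathscr{L}}$ is immediate from Lemma~\ref{lem:wsplit} once the $\mathbf{W}$-cohomology groups $\op{H}^q(\op{Gr}(k,n),\mathbf{W}(\mathscr{L}))$ are known to be free as $\op{W}(F)$-modules. This freeness is the Grassmannian counterpart of Proposition~\ref{prop:glnw}; it will be established in the course of proving Theorem~\ref{thm:ingrass} by an inductive computation via the $\mathbf{W}$-cohomology localization sequence of Proposition~\ref{prop:locgln} applied to a Schubert stratification, mirroring the torsion-freeness of the rational cohomology of real Grassmannians.

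Third, and this is the only non-formal input, one must verify $\rho(\op{R})=\overline{\op{c}}_{k-1}\overline{\op{c}}_{n-k}^\perp$. The class $\op{R}$ will be constructed in the proof of Theorem~\ref{thm:ingrass} as a generator of the top-degree $\mathbf{W}$-cohomology of $\op{Gr}(k,n)$ in the orientable case $k(n-k)$ odd, paralleling the fundamental class appearing in Proposition~\ref{prop:faselpn} for odd-dimensional projective space. My plan for computing $\rho(\op{R})$ is to realize $\op{R}$ as the image under $i_\ast$ of a generator of $\op{H}^0(Z,\mathbf{W}(\mathscr{N}_Z))$ for a suitably chosen closed Schubert subvariety $i\colon Z\hookrightarrow\op{Gr}(k,n)$ of codimension $n-1$; compatibility of $\rho$ with pushforward then identifies $\rho(\op{R})$ with the mod~$2$ fundamental class $[Z]\in\op{Ch}^{n-1}(\op{Gr}(k,n))$, which is in turn identified with $\overline{\op{c}}_{k-1}\overline{\op{c}}_{n-k}^\perp$ by a standard Schubert calculation. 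The main obstacle is selecting $Z$ so that its determinant normal bundle is trivial modulo $2$ (ensuring the pushforward lands in the untwisted summand $\op{H}^{n-1}(\op{Gr}(k,n),\mathbf{W})$ where $\op{R}$ lives) and then carrying out the Chow-theoretic identification of $[Z]$.
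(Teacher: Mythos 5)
Your first two parts coincide with the paper's argument: the formulas for all classes except $\op{R}$ are pulled back along $c\colon\op{Gr}(k,n)\to{\op{B}}\op{GL}_k\times{\op{B}}\op{GL}_{n-k}$ from Theorem~\ref{thm:glnin}(2), and injectivity of $\rho$ on $\op{Im}\beta_{\mathscr{L}}$ follows from freeness of $\mathbf{W}$-cohomology plus Lemma~\ref{lem:wsplit}. One structural caveat on the second part: in the paper this freeness is not deferred to the proof of Theorem~\ref{thm:ingrass} (whose proof in fact \emph{uses} the present proposition, so your phrasing risks a circularity), but is established beforehand as a standalone result, Theorem~\ref{thm:invertedeta}, by induction on $k$ using the sphere-bundle localization sequences of Proposition~\ref{prop:locgrass} in the style of Sadykov's computation of rational cohomology of real Grassmannians, not via a Schubert stratification. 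If you keep your ordering, you must make sure the $\mathbf{W}$-cohomology computation is carried out independently of Theorem~\ref{thm:ingrass} and of this proposition.

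The genuine gap is the third part, which is the only non-formal input and which you only sketch as a plan with acknowledged open ends. Two problems beyond the ones you name: first, the codimension-$(n-1)$ Schubert variety representing $\overline{\op{c}}_{k-1}\overline{\op{c}}_{n-k}^\perp$ (a hook-shaped Schubert class) is in general singular, so the proposed pushforward of a generator of $\op{H}^0(Z,\mathbf{W}(\mathscr{N}_Z))$ along $i\colon Z\hookrightarrow\op{Gr}(k,n)$ is not available in the form you use it; you would have to resolve or work with the Rost--Schmid complex and control the twist, which is a substantial argument, not a routine verification. Second, and more fundamentally, producing \emph{some} class in $\op{H}^{n-1}(\op{Gr}(k,n),\mathbf{I}^{n-1})$ with reduction $\overline{\op{c}}_{k-1}\overline{\op{c}}_{n-k}^\perp$ does not yet compute $\rho(\op{R})$: the class $\op{R}$ of Theorem~\ref{thm:ingrass} is only determined up to adding elements of $\op{Im}\beta$, and such a modification changes $\rho(\op{R})$, so you must identify your pushforward class with the $\op{R}$ actually constructed (or argue that your class can serve as $\op{R}$ in the presentation, i.e.\ that it maps to the free generator in $\mathbf{W}$-cohomology). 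The paper avoids all of this by taking $\op{R}$ as it is produced in the inductive proof of Theorem~\ref{thm:invertedeta} --- namely as a lift along the boundary map of the Euler class $\op{e}_{k-1}$ in the localization sequence for the sphere bundle $\mathscr{S}(k,n)$ --- and computing its reduction by tracing that construction through the corresponding sequences in mod~2 Chow theory. As it stands, your third step is an outline of an alternative strategy rather than a proof.
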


\begin{remark}
Note that this presentation gives a complete description of the cup product. To multiply two torsion classes, we first rewrite the complementary classes $\overline{\op{c}}^\perp_{2i}$ in terms of polynomials in the ordinary classes $\overline{\op{c}}_{2j}$. (It follows directly from the well-known presentation of $\op{Ch}^\bullet(\op{Gr}(k,n))$ that it is generated by the classes $\overline{\op{c}}_i$ and the complementary classes $\overline{\op{c}}^\perp_j$ can be expressed in terms of these.) The product of classes of the form $\beta_{\mathscr{L}}(\overline{\op{c}}_{2j_1}\cdots\overline{\op{c}}_{2j_l})$ is then given by the relation in $\op{H}^\bullet({\op{B}}\op{GL}_k)$. Note also that the product of $\op{R}$ with an even Pontryagin class is independent from the Pontryagin classes. The product of $\op{R}$ with a torsion class is a torsion class, and so it can be determined by computation in $\op{Ch}^\bullet(\op{Gr}(k,n))$. More detailed descriptions of how to work out products can be found in \cite{schubert}. 
\end{remark}

\begin{theorem}
\label{thm:chowwittgrass}
Let $F$ be a perfect field of characteristic $\neq 2$, and let $1\leq k<n$. Then there is a cartesian square of $\mathbb{Z}\oplus\mathbb{Z}/2\mathbb{Z}$-graded $\op{GW}(F)$-algebras:
\[
\xymatrix{
\widetilde{\op{CH}}^\bullet(\op{Gr}(k,n),\mathscr{O}\oplus\det\mathscr{S}_k^\vee) \ar[r] \ar[d] & \ker \partial_{\mathscr{O}}\oplus \ker\partial_{\det\mathscr{S}_k^\vee} \ar[d]^{\bmod 2}\\
\op{H}^\bullet(\op{Gr}(k,n),\mathbf{I}^\bullet\oplus\mathbf{I}^\bullet(\det\mathscr{S}_k^\vee)) \ar[r]_>>>>>>\rho & \op{Ch}^\bullet(\op{Gr}(k,n))^{\oplus 2}
}
\]
Here $\det\mathscr{S}_k^\vee$ is the determinant of the dual of the tautological rank $k$ subbundle on $\op{Gr}(k,n)$, 
\[
\partial_{\mathscr{L}}\colon \op{CH}^\bullet(\op{Gr}(k,n))\to \op{Ch}^\bullet(\op{Gr}(k,n)) \xrightarrow{\beta_{\mathscr{L}}} \op{H}^{\bullet+1}(\op{Gr}(k,n),\mathbf{I}^{\bullet+1}(\mathscr{L})) 
\]
is the (twisted) integral Bockstein map. 

The kernel of the integral Bockstein map $\partial_{\mathscr{L}}$ is the preimage under reduction mod 2 of the subalgebra of $\op{Ch}^\bullet(\op{Gr}(k,n))$ generated by $({\op{c}_i}^{(\perp)})^2$, ${\op{c}}_k$, ${\op{c}}_{n-k}^\perp$, and ${\op{c}}_k{\op{c}}_{n-k-1}^\perp$ together with the image of $\op{Sq}^2_{\mathscr{L}}$.
\end{theorem}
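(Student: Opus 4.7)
The plan is to establish the theorem in two essentially independent parts: the cartesian square and the explicit description of $\ker\partial_{\mathscr{L}}$.

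\textbf{Cartesian square.} This is a direct application of the twisted version of \cite[Proposition~2.11]{chow-witt} recalled at the end of Section~\ref{sec:recall}. The general result says that for any smooth $F$-scheme $X$ and any line bundle $\mathscr{L}$, the map
\[
\widetilde{\op{CH}}^\bullet(X,\mathscr{L})\to \op{H}^\bullet(X,\mathbf{I}^\bullet(\mathscr{L}))\times_{\op{Ch}^\bullet(X)} \ker\partial_{\mathscr{L}}
\]
induced from the fundamental square is always surjective, and is an isomorphism once $\op{CH}^\bullet(X)$ is $2$-torsion-free. The Grassmannian $\op{Gr}(k,n)$ admits a Schubert cell decomposition, so Proposition~\ref{prop:chowgrass} already implies that its Chow ring is freely generated over $\mathbb{Z}$; in particular it carries no $2$-torsion. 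Assembling the two twisted fiber squares for $\mathscr{L}\in\{\mathscr{O},\det\mathscr{S}_k^\vee\}$ yields the cartesian square of $\op{GW}(F)$-algebras in the statement, with lower horizontal map the reduction morphism described in Proposition~\ref{prop:inredgrass} and right vertical map the mod $2$ reduction of Chow classes.

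\textbf{Description of $\ker\partial_{\mathscr{L}}$.} By definition the integral Bockstein factors as $\partial_{\mathscr{L}}=\beta_{\mathscr{L}}\circ(\bmod 2)$, so $\ker\partial_{\mathscr{L}}$ is exactly the preimage under reduction mod $2$ of $\ker\beta_{\mathscr{L}}\subseteq\op{Ch}^\bullet(\op{Gr}(k,n))$. The key reduction is the identification $\ker\beta_{\mathscr{L}}=\ker\op{Sq}^2_{\mathscr{L}}$: one inclusion is automatic since $\op{Sq}^2_{\mathscr{L}}=\rho\circ\beta_{\mathscr{L}}$ by Proposition~\ref{prop:twistedsq2}, while the reverse uses the injectivity of $\rho$ on $\operatorname{Im}\beta_{\mathscr{L}}$ stated in Proposition~\ref{prop:inredgrass} (which itself is a consequence of Lemma~\ref{lem:wsplit} once the freeness of $\mathbf{W}$-cohomology of $\op{Gr}(k,n)$ is known). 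Thus everything reduces to computing $\ker\op{Sq}^2_{\mathscr{L}}$ on the ring $\op{Ch}^\bullet(\op{Gr}(k,n))\cong\mathbb{F}_2[\overline{\op{c}}_i,\overline{\op{c}}_j^\perp]/(\overline{\op{c}}\,\overline{\op{c}}^\perp=1)$ via the Wu formulas of Proposition~\ref{cor:chowmod2grass}.

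Containment of the listed subalgebra in $\ker\op{Sq}^2_{\mathscr{L}}$ is the straightforward half: squares are annihilated because $\op{Sq}^2_{\mathscr{O}}$ is a derivation and the Wu formula for $\op{Sq}^2_{\det\mathscr{S}_k^\vee}$ has the same shape as in $\op{Ch}^\bullet({\op{B}}\op{SL}_n)$; the top classes $\overline{\op{c}}_k$, $\overline{\op{c}}_{n-k}^\perp$ and the product $\overline{\op{c}}_k\overline{\op{c}}_{n-k-1}^\perp$ are handled by combining the Wu formula with the vanishing of Chern classes above the bundle rank and with the Whitney sum relation $\overline{\op{c}}\cdot\overline{\op{c}}^\perp=1$; and $\operatorname{Im}\op{Sq}^2_{\mathscr{L}}\subseteq\ker\beta_{\mathscr{L}}$ is immediate from the exactness $\beta_{\mathscr{L}}\circ\rho=0$ in the B\"ar sequence.

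\textbf{Main obstacle.} The nontrivial direction is showing that these generators actually span $\ker\op{Sq}^2_{\mathscr{L}}$. The strategy is the direct analogue of the argument in Corollary~\ref{cor:wuimage} (following \cite[p.~285]{cadek} and \cite[Proposition~8.15]{chow-witt}): take an arbitrary monomial in the $\overline{\op{c}}_i$'s and $\overline{\op{c}}_j^\perp$'s in $\ker\op{Sq}^2_{\mathscr{L}}$, pull out squares and odd Stiefel--Whitney classes (which are themselves in $\operatorname{Im}\op{Sq}^2_{\mathscr{L}}$ since $\overline{\op{c}}_{2i+1}=\op{Sq}^2_{\det\mathscr{S}_k}(\overline{\op{c}}_{2i})$ and $\overline{\op{c}}_1=\op{Sq}^2_{\det\mathscr{S}_k}(1)$), and reduce to checking a normal form. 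The genuinely new input compared to the ${\op{B}}\op{GL}_n$-case is the systematic use of the Whitney relation to rewrite products of complementary classes in terms of classes of $\mathscr{S}_k$, which is what forces the appearance of the distinguished generators $\overline{\op{c}}_k$, $\overline{\op{c}}_{n-k}^\perp$, and $\overline{\op{c}}_k\overline{\op{c}}_{n-k-1}^\perp$ (the latter corresponding to the reduction of the exceptional class $\op{R}$ when $k(n-k)$ is odd). This case-by-case bookkeeping on the parities of $k$ and $n-k$ is the most technically involved part of the proof.
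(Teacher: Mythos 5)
Your first half (the cartesian square) is exactly the paper's argument: the twisted version of \cite[Proposition~2.11]{chow-witt} applies because $\op{CH}^\bullet(\op{Gr}(k,n))$ is $2$-torsion-free by Proposition~\ref{prop:chowgrass}, and the two pieces of the square are supplied by Theorem~\ref{thm:ingrass} and Proposition~\ref{prop:inredgrass}. No issues there.

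For the description of $\ker\partial_{\mathscr{L}}$ you take a different, and in the end incomplete, route. Your identification $\ker\beta_{\mathscr{L}}=\ker\op{Sq}^2_{\mathscr{L}}$ (via $\op{Sq}^2_{\mathscr{L}}=\rho\circ\beta_{\mathscr{L}}$ and injectivity of $\rho$ on $\op{Im}\beta_{\mathscr{L}}$) is correct, but it converts the problem into computing the kernel of $\op{Sq}^2_{\mathscr{L}}$ on the quotient ring $\op{Ch}^\bullet(\op{Gr}(k,n))$, and that is precisely the step you do not carry out: the \v Cadek-style ``pull out squares and odd classes'' manipulation you invoke is the one used in Corollary~\ref{cor:wuimage} to determine the \emph{image} of $\op{Sq}^2_{\mathscr{L}}$, and the kernel computation of Corollary~\ref{cor:wukernel} lives in a polynomial ring; in the presence of the Whitney relation an element of the kernel need not be a sum of monomials each lying in the kernel, so the ``normal form'' reduction across the parity cases is exactly the missing content, not bookkeeping. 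The paper avoids this entirely: by exactness of the B\"ar sequence, $\ker\beta_{\mathscr{L}}$ \emph{is} the image of the reduction map $\rho_{\mathscr{L}}$, and since Theorem~\ref{thm:ingrass} already says the $\mathbf{I}$-cohomology is generated by the Pontryagin classes, the Euler classes, the Bockstein classes and (possibly) $\op{R}$, while Proposition~\ref{prop:inredgrass} records their reductions ($(\overline{\op{c}}^{(\perp)}_{2i})^2$, $\overline{\op{c}}_k$, $\overline{\op{c}}^\perp_{n-k}$, $\op{Sq}^2_{\mathscr{L}}$-images, $\overline{\op{c}}_{k-1}\overline{\op{c}}^\perp_{n-k}=\overline{\op{c}}_k\overline{\op{c}}^\perp_{n-k-1}$), the stated subalgebra description of $\ker\partial_{\mathscr{L}}$ is simply read off, with no new mod~$2$ computation. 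Note also that your route is not more elementary: it still needs the freeness of $\mathbf{W}$-cohomology (Theorem~\ref{thm:invertedeta} plus Lemma~\ref{lem:wsplit}) to get injectivity of $\rho$ on $\op{Im}\beta_{\mathscr{L}}$, on top of the unproved kernel computation. To repair your argument, either carry out the kernel-of-$\op{Sq}^2_{\mathscr{L}}$ computation in $\op{Ch}^\bullet(\op{Gr}(k,n))$ in full, or replace it by the one-line exactness argument $\ker\beta_{\mathscr{L}}=\op{Im}\rho_{\mathscr{L}}$.
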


\begin{proof}
This follows from \cite[Proposition 2.11]{chow-witt} since the Chow ring of $\op{Gr}(k,n)$ is $2$-torsion-free, cf. Proposition~\ref{prop:chowgrass}. The description of $\mathbf{I}^\bullet$-cohomology is given in Theorem~\ref{thm:ingrass}, and the description of the reduction morphism $\rho$ is given in Proposition~\ref{prop:inredgrass}. The description of the kernel of the boundary map follows directly from the definition and the B\"ar sequence, i.e., that the kernel of $\beta_{\mathscr{L}}$ is exactly the image of the reduction map $\rho_{\mathscr{L}}$. 
\end{proof}

\begin{remark}
We can determine the images of Euler classes and Pontryagin classes in Chow theory using Theorem~\ref{thm:glnchw}. 
\end{remark}

\subsection{Examples}

The following are two examples describing the $\mathbf{I}^\bullet$-cohomology of small Grassmannians. For alternative descriptions of the $\mathbf{I}^\bullet$-cohomology, using even Young diagrams for the $\mathbf{W}$-part and checkerboard fillings for Young diagrams for the image of $\beta$, cf.~\cite{schubert}. 

\begin{example}
Let us work out the example case $\op{Gr}(2,4)$. The relevant characteristic classes for $\mathbf{I}^\bullet$-cohomology are the Pontryagin classes 
\[
\op{p}_1^{(\perp)}=\beta_{\mathscr{O}}(\overline{\op{c}}_1^{(\perp)})\in \op{H}^2(\op{Gr}(2,4),\mathbf{I}^2), \quad \op{p}_2^{(\perp)}\in\op{H}^4(\op{Gr}(2,4),\mathbf{I}^4)
\]
and the Euler classes $\op{e}_2,\op{e}_2^\perp\in\op{H}^2(\op{Gr}(2,4), \mathbf{I}^2(\det\mathscr{E}_2^\vee))$; finally, there are all sorts of (twisted) Bockstein classes. 

The relations from the Whitney formula for Stiefel--Whitney classes are 
\[
\overline{\op{c}}_1=\overline{\op{c}}_1^\perp, \quad \overline{\op{c}}_2+\overline{\op{c}}_1^2+\overline{\op{c}}_2^\perp=0, \quad \overline{\op{c}}_1\overline{\op{c}}_2^\perp+\overline{\op{c}}_2\overline{\op{c}}_1^\perp=\overline{\op{c}}_1^3=0, \quad \overline{\op{c}}_2^2+\overline{\op{c}}_2\overline{\op{c}}_1^2=0.
\]
In particular, $\beta(\overline{\op{c}}_i)=\beta(\overline{\op{c}}_i^\perp)$ for $i=1,2$. By the Wu formula, $\op{Sq}^2(\overline{\op{c}}_2)=\overline{\op{c}}_1\overline{\op{c}}_2$ and therefore $\beta(\overline{\op{c}}_1\overline{\op{c}}_2)=0$. Since Bocksteins of squares are trivial by the derivation property, this means that the only nontrivial Bockstein classes are  $\beta_{\mathscr{O}}(\overline{\op{c}}_1)=\op{p}_1$  and $\beta_{\mathscr{O}}(\overline{\op{c}}_2)$. 

With twisted coefficients, we have the class $\beta_{\det\mathscr{E}_2^\vee}(1)$. The other twisted Bockstein classes $\beta_{\det\mathscr{E}_2^\vee}(\overline{\op{c}}_i)$ are trivial:  we can check on reduction mod 2, the case $i=1$ follows directly from the Wu formula and the case $i=2$ follows from $\overline{\op{c}}_3=0$. The other potential $2$-torsion classes are products of the form $\tau_\emptyset\beta$. We check on mod 2 reduction that  $\rho(\tau_\emptyset\beta(\overline{\op{c}}_1))=\overline{\op{c}}_1^3=0$ but $\rho(\tau_\emptyset\beta(\overline{\op{c}}_2))=\overline{\op{c}}_1^2\overline{\op{c}}_2=\overline{\op{c}}_2^2\neq 0$. So the torsion classes with twisted coefficients are $\beta_{\det\mathscr{E}_2^\vee}(1)$ in degree 1 and  $\beta_{\det\mathscr{E}_2^\vee}(1)\beta_{\mathscr{O}}(\overline{\op{c}}_2)$ in degree $4$. 

The relations encoded in $\op{p}\cdot\op{p}^\perp=1$ are the following:
\[
\beta(\overline{\op{c}}_1)=\beta(\overline{\op{c}}_1^\perp), \quad \op{p}_2+\beta(\overline{\op{c}}_1)^2+\op{p}_2^\perp=0, \quad \op{p}_2^2=0.
\]
There is also a relation $2\op{p}_2\beta(\overline{\op{c}}_1)=0$ which is trivially satisfied. From $\overline{\op{c}}_1^3=0$ above we find that $\op{Sq}^2(\overline{\op{c}}_1)^2=\overline{\op{c}}_1^4=0$ and therefore $\beta(\overline{\op{c}}_1)^2=0$. In particular, $\op{p}_2=-\op{p}_2^\perp$. Consequently, the only non-torsion classes are $\op{p}_2$, $\op{e}_2$ and $\op{e}_2^\perp$, with $\op{e}_2^2=(\op{e}_2^\perp)^2=\op{p}_2$. 

Note that $\op{Gr}(2,4)$ is an orientable variety, and Poincar{\'e} duality is satisfied for the $\mathbf{I}^\bullet$-cohomology. 

For the Chow--Witt ring, the Chern classes $\op{c}_1^{(\perp)}$ and  $\op{c}_2^{(\perp)}$ have nontrivial Bocksteins and hence do not lift to the Chow--Witt ring. As noted above, the classes $\op{c}_1^2$, ${\op{c}_1}{\op{c}_2}$ and $\op{c}_2^2$ have trivial Steenrod squares and therefore we have 
\[
\ker\partial_{\mathscr{O}}=\mathbb{Z}[{\op{c}_1}{\op{c}_2},2{\op{c}_i},\op{c}_1^2,\op{c}_2^2].
\]
On the other hand, $\ker\partial_{\det\mathscr{E}_2}$ is the submodule generated by $2$, ${\op{c}_1}$ and $\op{c}_2$.
\end{example}

\begin{example}
\label{ex:gr25}
As another example, we work out the Steenrod squares for $\op{Gr}(2,5)$. The relevant relations arising from the Whitney sum formula are
\[
\overline{\op{c}}_1=\overline{\op{c}}_1^\perp,\qquad \overline{\op{c}}_2^\perp =\overline{\op{c}}_2+\overline{\op{c}}_1^2, \qquad
\overline{\op{c}}_3^\perp = \overline{\op{c}}_1^3, \qquad
\overline{\op{c}}_2^2+\overline{\op{c}}_1^2\overline{\op{c}}_2+\overline{\op{c}}_1^4 = \overline{\op{c}}_2\overline{\op{c}}_1^3=0. 
\]
Now we go through the individual degrees in $\op{Ch}^\bullet(\op{Gr}(2,5))$:
\begin{enumerate}
\item Degree 1 has $\overline{\op{c}}_1$ and $\op{Sq}^2(\overline{\op{c}}_1)=\overline{\op{c}}_1^2$; this class doesn't lift to $\op{H}^1(\op{Gr}(2,5),\mathbf{I}^1)$. 
\item Degree 2 has $\overline{\op{c}}_2$ with $\op{Sq}^2(\overline{\op{c}}_2)=\overline{\op{c}}_1\overline{\op{c}}_2$ and $\overline{\op{c}}_1^2$ with trivial $\op{Sq}^2$. So the latter class lifts to a torsion class $\beta(\overline{\op{c}}_1)\in\op{H}^2(\op{Gr}(2,5),\mathbf{I}^2)$. 
\item Degree 3 has $\overline{\op{c}}_1^3$ with $\op{Sq}^2(\overline{\op{c}}_1^3)=\overline{\op{c}}_1^4$ and $\overline{\op{c}}_1\overline{\op{c}}_2$ with trivial $\op{Sq}^2$. So the latter class lifts to a torsion class $\beta(\overline{\op{c}}_2)\in\op{H}^3(\op{Gr}(2,5),\mathbf{I}^3)$. 
\item Degree 4 has $\overline{\op{c}}_1^4$ and $\overline{\op{c}}_1^2\overline{\op{c}}_2$, both with trivial $\op{Sq}^2$. The class $\overline{\op{c}}_2^2=\overline{\op{c}}_1^4+\overline{\op{c}}_1^2\overline{\op{c}}_2$ lifts to the Pontryagin class, and the $\overline{\op{c}}_1^4$ lifts to $\beta(\overline{\op{c}}_1^3)\in\op{H}^4(\op{Gr}(2,5),\mathbf{I}^4)$. 
\item Degree 5 has $\overline{\op{c}}_1^5$ with $\op{Sq}^2(\overline{\op{c}}_1^5)=\overline{\op{c}}_1^6$ and consequently this class doesn't lift to $\mathbf{I}^5$-cohomology. 
\item Degree 6 has $\overline{\op{c}}_1^6$ with trivial $\op{Sq}^2$, and this class lifts to the integral class $\beta(\overline{\op{c}}_1^5)\in\op{H}^6(\op{Gr}(2,5),\mathbf{I}^6)$
\end{enumerate}
This recovers exactly the pattern for integral cohomology as discussed e.g. in \cite{casian:kodama}. In addition to that, we can use the formulas from Theorem~\ref{thm:main1} to determine the cup product of the torsion classes. Computations as above could be done to determine the cohomology with twisted coefficients as well. 
\end{example}

\section{Chow--Witt rings of finite Grassmannians: proofs}
\label{sec:grassproofs}

In this section, we will now prove the claims about the structure of $\mathbf{I}^\bullet$-cohomology of the Grassmannians discussed in Section~\ref{sec:inproofs}. The overall argument is again to use the decomposition of $\mathbf{I}$-cohomology into the image of $\beta$ and the $\mathbf{W}$-cohomology. We compute the $\mathbf{W}$-cohomology using a version of the inductive procedure used by Sadykov to compute rational cohomology of the real Grassmannians, cf. \cite{sadykov}. The base case $k=1$ is the case of projective space which basically follows from \cite{fasel:ij}. The inductive step compares the cohomology of the Grassmannians $\op{Gr}(k-1,n)$ and $\op{Gr}(k,n)$ via a space which appears as sphere bundle of the complementary resp. tautological bundle over $\op{Gr}(k-1,n)$ and $\op{Gr}(k,n)$, respectively. The image of $\beta$ is detected on the mod 2 Chow ring, which determines the multiplication with torsion classes.

\subsection{Localization sequence for inductive proof}

As a preparation for the inductive computation of $\mathbf{W}$-cohomology, we set up the relevant localization sequences which allow to compare cohomology of different Grassmannians.

\begin{proposition}
\label{prop:geomgrass}
\begin{enumerate}
\item There are isomorphisms
\[
\op{Gr}(k,n)\cong\op{Gr}(n-k,n).
\]
\item
Denote by $q\colon \mathscr{S}_k\to\op{Gr}(k,n)$ and $p\colon \mathscr{Q}_{n-k+1}\to\op{Gr}(k-1,n)$ the respective tautological bundles. Then there is an $\mathbb{A}^1$-weak equivalence of associated sphere bundles
\[
\mathscr{S}_k\setminus \op{Gr}(k,n)\simeq \mathscr{Q}_{n-k+1}\setminus\op{Gr}(k-1,n). 
\]
Moreover, under this weak equivalence, we have a correspondence of pullbacks of tautological vector bundles $q^\ast \mathscr{S}_k\cong p^\ast \mathscr{S}_{k-1}\oplus\mathscr{O}$. 
\end{enumerate}
\end{proposition}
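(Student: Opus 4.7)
For part (1), the plan is to invoke classical linear duality: the assignment $W \mapsto W^\perp$ sending a $k$-dimensional subspace of $V = F^n$ to its annihilator in $V^\vee$ is an isomorphism $\op{Gr}(k, V) \xrightarrow{\cong} \op{Gr}(n-k, V^\vee)$, and any choice of basis identifies $V^\vee \cong V$ to produce $\op{Gr}(k, n) \cong \op{Gr}(n-k, n)$.

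For part (2), the strategy is to construct a zigzag of $\mathbb{A}^1$-weak equivalences through an intermediate variety. Let
\[
T = \{(U, v) \in \op{Gr}(k-1, n) \times \mathbb{A}^n : v \notin U\},
\]
so $T$ is the open complement of the total space of $\mathscr{S}_{k-1}$ inside the trivial rank $n$ bundle on $\op{Gr}(k-1, n)$. Consider the morphisms
\[
\alpha \colon T \to \mathscr{S}_k \setminus \op{Gr}(k,n), \qquad (U, v) \mapsto (U + \langle v \rangle, v),
\]
\[
\beta \colon T \to \mathscr{Q}_{n-k+1} \setminus \op{Gr}(k-1,n), \qquad (U, v) \mapsto (U, v + U),
\]
both well-defined since $v \notin U$ forces $v \neq 0$ and $\dim(U + \langle v \rangle) = k$. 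The map $\beta$ is the restriction of the natural bundle quotient $\op{Gr}(k-1, n) \times V \twoheadrightarrow \mathscr{Q}_{n-k+1}$, exhibiting $T$ as a torsor under $p^\ast \mathscr{S}_{k-1}$. For $\alpha$, the fiber over $(W, v)$ consists of hyperplanes $U \subset W$ with $v \notin U$, which is canonically the space of splittings of $0 \to \langle v \rangle \to W \to W/\langle v \rangle \to 0$, a torsor under $\op{Hom}(W/\langle v \rangle, \langle v \rangle)$; globally this structure extends to a torsor under a rank $(k-1)$ vector bundle on $\mathscr{S}_k \setminus \op{Gr}(k,n)$. Both $\alpha$ and $\beta$ are then Zariski-locally trivial affine bundles, hence $\mathbb{A}^1$-weak equivalences, which gives the desired zigzag.

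For the bundle identification, I would compute on $T$: the pullback $\alpha^\ast q^\ast \mathscr{S}_k$ has fiber $U \oplus \langle v \rangle$ at $(U, v)$, while $\beta^\ast p^\ast \mathscr{S}_{k-1}$ has fiber $U$. The line subbundle with fiber $\langle v \rangle$ is globally trivialized on $T$ by the tautological section $v$ (which is nowhere zero by definition of $T$), yielding $\alpha^\ast q^\ast \mathscr{S}_k \cong \beta^\ast p^\ast \mathscr{S}_{k-1} \oplus \mathscr{O}$ on $T$. This is the required correspondence of pulled-back bundles under the zigzag of weak equivalences.

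The main technical obstacle is verifying that $\alpha$ is a Zariski-locally trivial affine bundle, not merely a morphism with affine fibers. This reduces to the standard fact that, for a Zariski-locally split short exact sequence of vector bundles, the sheaf of splittings is a Zariski-locally trivial torsor under the corresponding $\op{Hom}$-bundle; it can be checked by choosing local trivializations of $q^\ast \mathscr{S}_k$ compatible with the line subbundle $\langle v\rangle$.
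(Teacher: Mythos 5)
Your proof is correct, and for part (2) it takes a genuinely different route from the paper. The paper never leaves the homogeneous-space models: it replaces $\op{Gr}(k,n)$ and $\op{Gr}(k-1,n)$ by the quotients $\op{GL}_n/\op{GL}_k\times\op{GL}_{n-k}$ and $\op{GL}_n/\op{GL}_{k-1}\times\op{GL}_{n-k+1}$, views the pullbacks of $\mathscr{S}_k$ and $\mathscr{Q}_{n-k+1}$ as associated bundles over Stiefel varieties, and identifies both zero-section complements with $\op{GL}_n/\op{GL}_{k-1}\times\op{GL}_{n-k}$ up to torsors under unipotent groups (the stabilizer of a nonzero vector in the standard representation being $\op{GL}_{k-1}$, resp.\ $\op{GL}_{n-k}$, up to its unipotent radical); the splitting $q^\ast\mathscr{S}_k\cong p^\ast\mathscr{S}_{k-1}\oplus\mathscr{O}$ is read off from the same representation-theoretic picture. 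You instead build the explicit incidence variety $T=\{(U,v): v\notin U\}$ with its two projections: $\beta$ is visibly the restriction of the surjection $\op{Gr}(k-1,n)\times V\twoheadrightarrow\mathscr{Q}_{n-k+1}$, hence a torsor under $p^\ast\mathscr{S}_{k-1}$, and $\alpha$ realizes $T$ as the scheme of splittings of the tautological extension $0\to\mathscr{O}\xrightarrow{v} q^\ast\mathscr{S}_k\to q^\ast\mathscr{S}_k/\mathscr{O}\to 0$ over $\mathscr{S}_k\setminus\op{Gr}(k,n)$, a torsor under a rank $(k-1)$ Hom-bundle; both are Zariski-locally trivial affine bundles, hence $\mathbb{A}^1$-equivalences, and the tautological nowhere-vanishing section of $\alpha^\ast q^\ast\mathscr{S}_k$ makes the bundle identification on $T$ transparent. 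Your version buys concreteness: the intermediate object and the trivializing section are explicit, and the only input is that torsors under vector bundles (equivalently, Zariski-locally split extensions and vector-bundle surjections) are locally trivial affine bundles. The paper's version avoids any by-hand local-triviality check and slots directly into the torsor/classifying-space formalism it uses for the localization sequences elsewhere (the setup before Proposition~\ref{prop:locgln}); part (1) you prove exactly as the paper does.
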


\begin{proof}
(1) For a vector space $V$ of dimension $n$, the natural bijection between $k$-dimensional subspaces of $V$ and $(n-k)$-dimensional subspaces of its dual $V^\vee$ induces a natural isomorphism $\op{Gr}(k,V)\cong\op{Gr}(n-k,V^\vee)$. Choosing an isomorphism $V\cong V^\vee$ induces an isomorphism $\op{Gr}(n-k,V^\vee)\cong\op{Gr}(n-k,V)$. This provides the claimed isomorphisms. Note that these are not natural.

(2) Since we only want to establish an $\mathbb{A}^1$-weak equivalence, we can replace the Grassmannians $\op{Gr}(k,n)$ and $\op{Gr}(k-1,n)$ by the quotients $\op{GL}_n/\op{GL}_k\times\op{GL}_{n-k}$ and $\op{GL}_n/\op{GL}_{k-1}\times\op{GL}_{n-k+1}$, respectively. The pullback of the vector bundle $\mathscr{S}_k$ over $\op{GL}_n/\op{GL}_k\times\op{GL}_{n-k}$ is the associated bundle for the Stiefel variety $\op{GL}_n/\op{GL}_{n-k}$ viewed as $\op{GL}_k$-torsor and the natural $\op{GL}_k$-representation on $\mathbb{A}^k$. As in the setup of the localization sequence before Proposition~\ref{prop:locgln}, the complement of the zero section is then, up to a torsor under a unipotent group, $\op{GL}_n/\op{GL}_{k-1}\times\op{GL}_{n-k}$ because $\op{GL}_{k-1}$ is the stabilizer of a line in $\mathbb{A}^k$. A similar argument works for $\op{Gr}(k-1,n)$. The vector bundle $\mathscr{Q}_{n-k+1}$ over $\op{GL}_n/\op{GL}_{k-1}\times\op{GL}_{n-k+1}$ is the associated bundle for the Stiefel variety $\op{GL}_n/\op{GL}_{k-1}$ and the natural representation of $\op{GL}_{n-k+1}$ on $\mathbb{A}^{n-k+1}$. The complement of the zero section can then, up to a torsor under a unipotent group, be identified with $\op{GL}_n/\op{GL}_{k-1}\times\op{GL}_{n-k+1}$. This yields the required $\mathbb{A}^1$-weak equivalence. By an argument similar to the one in the setup for the localization sequence before Proposition~\ref{prop:locgln}, the pullback of the universal bundle $\mathscr{S}_k$ to $\mathscr{S}_k\setminus\op{Gr}(k,n)$ will split off a direct summand, and the remainder is the tautological rank $(k-1)$-bundle on $\op{GL}_n/\op{GL}_{k-1}\times\op{GL}_{n-k}$. On the other hand, the pullback of the tautological rank $(k-1)$-bundle on $\op{GL}_n/\op{GL}_{k-1}\times\op{GL}_{n-k+1}$ to $\op{GL}_n/\op{GL}_{k-1}\times\op{GL}_{n-k}$ will still be the tautological rank $(k-1)$-bundle. This proves the claims. 
\end{proof}

\begin{remark}
In abuse of notation, we will denote the total spaces of both sphere bundles $\mathscr{Q}_{n-k+1}\setminus\op{Gr}(k-1,n)$ and $\mathscr{S}_k\setminus\op{Gr}(k,n)$ by $\mathscr{S}(k,n)$. This is justified by Proposition~\ref{prop:geomgrass} because they are $\mathbb{A}^1$-equivalent and hence they will have isomorphic $\mathbf{W}$-cohomology. 
\end{remark}

We obtain two localization sequences, relating the Grassmannians $\op{Gr}(k,n)$ and $\op{Gr}(k-1,n)$ to their associated sphere bundles (which are weakly equivalent). This is the relevant input for the induction step for the computation of the $\mathbf{W}$-cohomology of the Grassmannians. 

\begin{proposition}
\label{prop:locgrass}
\begin{enumerate}
\item For any line bundle $\mathscr{L}$ on $\op{Gr}(k-1,n)$, there is a long exact localization sequence 
\begin{eqnarray*}
\cdots &\xrightarrow{\op{e}^\perp_{n-k+1}}& \op{H}^\bullet(\op{Gr}(k-1,n),\mathbf{W}(\mathscr{L})) \xrightarrow{p^\ast}  \op{H}^\bullet(\mathscr{S}(k,n),\mathbf{W}(\mathscr{L})) \\ &\xrightarrow{\partial}& \op{H}^{\bullet-n+k}(\op{Gr}(k-1,n),\mathbf{W}(\mathscr{L}\otimes\det\mathscr{Q}_{n-k+1})) \\&\xrightarrow{\op{e}_{n-k+1}^\perp}& \op{H}^{\bullet+1}(\op{Gr}(k-1,n),\mathbf{W}(\mathscr{L})) \to \cdots 
\end{eqnarray*}
\item For any line bundle $\mathscr{L}$ on $\op{Gr}(k,n)$, there is a long exact localization sequence 
\begin{eqnarray*}
\cdots \xrightarrow{\op{e}_{k}} \op{H}^\bullet(\op{Gr}(k,n),\mathbf{W}(\mathscr{L}))&\xrightarrow{q^\ast}& \op{H}^\bullet(\mathscr{S}(k,n),\mathbf{W}(\mathscr{L})) \to \\ \xrightarrow{\partial} \op{H}^{\bullet-k+1}(\op{Gr}(k,n),\mathbf{W}(\mathscr{L}\otimes\det\mathscr{S}_k))&\xrightarrow{\op{e}_k}& \op{H}^{\bullet+1}(\op{Gr}(k,n),\mathbf{W}(\mathscr{L})) \to \cdots 
\end{eqnarray*}
\end{enumerate}
\end{proposition}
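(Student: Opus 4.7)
The plan is to obtain both exact sequences as instances of the general localization sequence for $\mathbf{W}$-cohomology recalled in Section~\ref{sec:recall}, applied to the closed embedding of the zero section of the appropriate tautological vector bundle. The two sequences are independent statements; the common notation $\mathscr{S}(k,n)$ is justified only through the $\mathbb{A}^1$-weak equivalence of Proposition~\ref{prop:geomgrass}(2), but each of the sequences is set up on its own geometric side.

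For part (1), I would take $X=\mathscr{Q}_{n-k+1}$, the total space of the tautological quotient bundle on $\op{Gr}(k-1,n)$, with closed zero section $Z=\op{Gr}(k-1,n)\hookrightarrow X$ of codimension $c=n-k+1$ and open complement $U=\mathscr{Q}_{n-k+1}\setminus \op{Gr}(k-1,n)$, which under the $\mathbb{A}^1$-equivalence of Proposition~\ref{prop:geomgrass}(2) we denote $\mathscr{S}(k,n)$. The normal bundle of $Z$ in $X$ is $\mathscr{Q}_{n-k+1}$ itself, so $\det\mathscr{N}_Z\cong\det\mathscr{Q}_{n-k+1}$. Plugging these data into the general localization sequence for $\mathbf{W}$-cohomology, and using the $\mathbb{A}^1$-homotopy invariance $p^\ast\colon\op{H}^\bullet(\op{Gr}(k-1,n),\mathbf{W}(\mathscr{L}))\xrightarrow{\cong}\op{H}^\bullet(\mathscr{Q}_{n-k+1},\mathbf{W}(p^\ast\mathscr{L}))$ to identify the cohomology of $X$ with that of the base, yields a sequence of the desired shape with connecting/pushforward map between the stated terms.

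The one thing that needs identification is the pushforward $i_\ast$ from the zero section. Once we conjugate by $p^\ast$ to identify source and target as cohomology of $\op{Gr}(k-1,n)$, the composition $(p^\ast)^{-1}\circ i_\ast$ becomes exactly the definition of multiplication by the Chow--Witt / $\mathbf{W}$-theoretic Euler class of $\mathscr{Q}_{n-k+1}$, by the same argument used to set up the Euler class of the universal bundle on ${\op{B}}\op{GL}_n$ in Proposition~\ref{prop:locgln} (d\'evissage plus forgetting supports, applied here to the vector bundle $\mathscr{Q}_{n-k+1}$ in place of $\gamma_n$). This gives $\op{e}_{n-k+1}^\perp$ as the boundary, with the indicated degree and duality shifts.

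Part (2) is entirely parallel with $X=\mathscr{S}_k$ on $\op{Gr}(k,n)$, $Z=\op{Gr}(k,n)$ as the zero section (codimension $k$, normal bundle $\mathscr{S}_k$ with determinant $\det\mathscr{S}_k$), $U=\mathscr{S}_k\setminus \op{Gr}(k,n)=\mathscr{S}(k,n)$; the boundary becomes multiplication by $\op{e}_k$. I do not expect any serious obstacles: the only thing to be careful about is the bookkeeping of degree shifts by the rank and duality shifts by the determinant of the normal bundle, and the fact that $\mathbf{W}$-cohomology (unlike $\mathbf{I}$-cohomology) has no auxiliary index shift in the coefficient sheaf, so the sequence really is an honest long exact sequence in all degrees as written.
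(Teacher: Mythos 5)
Your proposal is correct and is exactly the argument the paper intends (it gives no separate proof): apply the $\mathbf{W}$-cohomology localization sequence from Section~\ref{sec:recall} to the zero sections of $\mathscr{Q}_{n-k+1}$ and $\mathscr{S}_k$ as set up in Proposition~\ref{prop:geomgrass}, use homotopy invariance to identify the cohomology of the total space with that of the base, and identify the resulting composite (d\'evissage plus forgetting supports) with multiplication by the Euler class, just as in Proposition~\ref{prop:locgln}. Your degree and duality bookkeeping (shift by the rank, twist by the determinant of the normal bundle cancelling against the Euler class twist) is also correct.
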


Similar localization sequences are true for the other cohomology theories considered in this paper, but we will not need those.

\subsection{Inductive computation of $\mathbf{W}$-cohomology}

We now determine the structure of  the total $\mathbf{W}$-cohomology ring of $\op{Gr}(k,n)$. The argument completely follows the computation of rational cohomology of $\op{Gr}_k(\mathbb{R}^n)$ in \cite{sadykov}. Some formulas for oriented Grassmannians related to the ones below can already be found in Ananyevskiy's computation for $\eta$-inverted theories, cf.  \cite{ananyevskiy}.

\begin{theorem}
  \label{thm:invertedeta}
  Let $F$ be a  perfect field of characteristic $\neq 2$ and let $1\leq k<n$. The total $\mathbf{W}$-cohomology ring $\bigoplus_{i,\mathscr{L}}\op{H}^i(\op{Gr}(k,n),\mathbf{W}(\mathscr{L}))$ has the following presentation, as commutative $\mathbb{Z}\oplus\op{Pic}(\op{Gr}(k,n))/2$-graded $\op{W}(F)$-algebra:
\begin{enumerate}
\item For $k,n$ even, 
\[
\bigoplus_{j,\mathscr{L}}\op{H}^j(\op{Gr}(k,n),\mathbf{W}(\mathscr{L}))\cong \frac{\op{W}(F)[\op{p}_2,\dots,\op{p}_k,\op{e}_k,\op{p}_2^\perp,\dots,\op{p}_{n-k}^\perp,\op{e}_{n-k}^\perp]}{(\op{p}\cdot\op{p}^\perp=1,\op{e}_k\cdot\op{e}_{n-k}^\perp=0)}
\]
\item If $n$ is odd, 
\[
\bigoplus_{j,\mathscr{L}}\op{H}^j(\op{Gr}(k,n),\mathbf{W}(\mathscr{L}))\cong \left\{\begin{array}{ll} \frac{\op{W}(F)[\op{p}_2,\dots,\op{p}_k,\op{e}_k,\op{p}_2^\perp,\dots,\op{p}_{n-k-1}^\perp]}{(\op{p}\cdot\op{p}^\perp=1)} & k\textrm{ even}\\
\frac{\op{W}(F)[\op{p}_2,\dots,\op{p}_{k-1},\op{p}_2^\perp,\dots,\op{p}_{n-k}^\perp,\op{e}_{n-k}^\perp]}{(\op{p}\cdot\op{p}^\perp=1)} & k\textrm{ odd}
\end{array}\right.
\]
\item  For $k,n-k$ odd, 
\[
\bigoplus_{j,\mathscr{L}}\op{H}^j(\op{Gr}(k,n),\mathbf{W}(\mathscr{L}))\cong \frac{\op{W}(F)[\op{p}_2,\dots,\op{p}_{k-1},\op{p}_2^\perp,\dots,\op{p}_{n-k-1}^\perp]}{(\op{p}\cdot\op{p}^\perp=1)}\otimes \bigwedge[R]
\]
\end{enumerate}
Here the notation is the one of Theorem~\ref{thm:ingrass}, i.e., the bidegrees of the even Pontryagin classes $\op{p}_{2i}$ are $(4i,0)$, the bidegrees of the Euler classes $\op{e}_k$ and $\op{e}_{n-k}^\perp$ are $(k,1)$ and $(n-k,1)$, respectively, and the class $\op{R}$ in the last case has bidegree $(n-1,0)$. 
\end{theorem}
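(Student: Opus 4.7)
The strategy is induction on $k$, using $\op{Gr}(k,n)\cong\op{Gr}(n-k,n)$ from Proposition~\ref{prop:geomgrass} to assume $k\leq n/2$, and following Sadykov's approach for rational cohomology of real Grassmannians as transported to $\mathbf{W}$-cohomology through the two localization sequences of Proposition~\ref{prop:locgrass}. Both sequences connect $\op{Gr}(k-1,n)$ and $\op{Gr}(k,n)$ through the common sphere bundle $\mathscr{S}(k,n)$. The base case $k=1$ is $\mathbb{P}^{n-1}$ and follows from Proposition~\ref{prop:faselpn}: when $n$ is even (so $k,n-k$ both odd) the fundamental class of the orientable $\mathbb{P}^{n-1}$ becomes the exceptional class $\op{R}$ of Case (3); when $n$ is odd, the class $\op{e}_{n-1}^\perp$ is the Euler class of the rank $n-1$ hyperplane bundle featured in Case (2).

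For the inductive step, I would first use the localization sequence over $\op{Gr}(k-1,n)$ to determine $\op{H}^\bullet(\mathscr{S}(k,n),\mathbf{W}(\mathscr{L}))$, then use the one over $\op{Gr}(k,n)$ to extract the target. The analysis splits by parity: if $n-k+1$ is odd then $\op{e}_{n-k+1}^\perp=0$ in $\mathbf{W}$-cohomology by Proposition~\ref{prop:glnw} (applied via the classifying map $c\colon \op{Gr}(k-1,n)\to {\op{B}}\op{GL}_{k-1}\times {\op{B}}\op{GL}_{n-k+1}$), so the first sequence splits into short exact sequences and $p^\ast$ is injective; dually, if $k$ is odd then $\op{e}_k=0$ and the second sequence degenerates. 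In cases where both relevant ranks are even, the Euler classes are independent non-torsion polynomial generators (by Proposition~\ref{prop:glnw} again), and one checks injectivity of multiplication by the Euler class by a secondary induction on cohomological degree, keeping track of which generator is newly introduced at each stage.

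The relations are all natural consequences of the defining extension $0\to \mathscr{S}_k\to \mathscr{O}^{\oplus n}\to \mathscr{Q}_{n-k}\to 0$. The Whitney sum formula for Pontryagin classes (Proposition~\ref{prop:pwhitney}) applied to a trivial bundle gives $\op{p}\cdot\op{p}^\perp=1$. The analogous formula for Euler classes gives $\op{e}_k\cdot \op{e}_{n-k}^\perp=0$ whenever both factors are present in $\mathbf{W}$-cohomology (Case (1)), because the Euler class of the trivial rank $n$ bundle vanishes. In Case (3), where both Euler classes are zero in $\mathbf{W}$-cohomology, the boundary map of the localization sequence produces a new $\op{W}(F)$-generator $\op{R}$ in degree $n-1$, representing the Poincar\'e-Lefschetz dual of a point on the sphere bundle; the relation $\op{R}^2=0$ then follows on degree grounds, since $\mathbf{W}$-cohomology of $\op{Gr}(k,n)$ vanishes above the top non-vanishing degree which is forced by induction to be $n-1$ in Case (3).

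The main obstacle will be the bookkeeping in Case (1), where both Euler classes appear and one must show that the presentation in the two sets of generators $\{\op{p}_{2i},\op{e}_k\}$ and $\{\op{p}_{2j}^\perp,\op{e}_{n-k}^\perp\}$ subject only to $\op{p}\cdot\op{p}^\perp=1$ and $\op{e}_k\cdot\op{e}_{n-k}^\perp=0$ is complete. This requires showing that the long exact sequences degenerate into the expected short exact sequences of free $\op{W}(F)$-modules, via a Poincar\'e-series count parallel to Sadykov's argument, and using that any candidate extra relation would descend to a nontrivial relation in the image of $\beta$, contradicting the already-established structure of the mod $2$ Chow ring described in Proposition~\ref{cor:chowmod2grass}.
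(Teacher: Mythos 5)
Your overall skeleton is the same as the paper's (induction on $k$ through the two localization sequences of Proposition~\ref{prop:locgrass}, base case $\mathbb{P}^{n-1}$ via Proposition~\ref{prop:faselpn}, parity case analysis, relations from the Whitney sum formula), and the degenerate cases where the relevant Euler class vanishes are handled correctly. The genuine gap is in the remaining cases: you assert that the Euler classes are ``independent non-torsion polynomial generators'' and that multiplication by the Euler class is injective, citing Proposition~\ref{prop:glnw}. That proposition concerns ${\op{B}}\op{GL}_n$; on the finite Grassmannian $\op{Gr}(k-1,n)$ the $\mathbf{W}$-cohomology is a finite $\op{W}(F)$-algebra and the Euler classes are zero-divisors. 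For instance, when $k-1$ and $n-k+1$ are both even the inductive presentation contains the relation $\op{e}_{k-1}\cdot\op{e}_{n-k+1}^\perp=0$, so multiplication by $\op{e}_{n-k+1}^\perp$ has kernel containing $\langle \op{e}_{k-1}\rangle$; when $k-1$ is odd and $n-k+1$ even the kernel is $\langle \op{p}_{k-2}\op{e}_{n-k+1}^\perp\rangle$. The paper's inductive step hinges on computing these kernels and cokernels exactly (Proposition~\ref{prop:kercokereulereta}): by exactness, the kernel elements are precisely the classes that lift along the boundary map to the new generators of $\op{H}^\bullet(\mathscr{S}(k,n),\mathbf{W}(\mathscr{L}))$ (the class $\op{R}$ in degree $(n-1,0)$, resp.\ a class in degree $(2n-3,0)$), and tracking these through the second localization sequence is exactly what produces or excludes the exceptional generator in the answer for $\op{Gr}(k,n)$. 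Replacing this kernel--cokernel analysis by an injectivity claim means the sphere-bundle cohomology cannot be identified and the induction does not close.

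Two further steps are incorrect as stated. First, $\op{R}^2=0$ does not follow ``on degree grounds'': in case (3) the Pontryagin part of the ring is nonzero in degrees up to roughly $(k-1)(n-k-1)$, which exceeds $n-1$ as soon as $k\geq 3$ (e.g.\ for $\op{Gr}(3,10)$ one has classes in degree $12>9$ and $2(n-1)=18$ is below the top degree $21$), so the cohomology does not vanish above degree $n-1$; in the paper the vanishing of $\op{R}^2$ is part of the same localization-sequence bookkeeping. Second, completeness of the relations cannot be checked by arguing that an extra relation ``would descend to a nontrivial relation in the image of $\beta$'' and be seen in $\op{Ch}^\bullet(\op{Gr}(k,n))$: the image of $\beta$ is exactly what is killed in the projection from $\mathbf{I}$- to $\mathbf{W}$-cohomology, so a hypothetical extra relation among the non-torsion classes leaves no trace in the mod $2$ Chow ring. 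In the paper, injectivity of the candidate presentation is again extracted from the exactness of the two localization sequences together with the known kernels of Euler-class multiplication, not from mod $2$ considerations.
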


\begin{remark}
It should be pointed out that the description of the $\mathbf{I}^\bullet$-cohomology ring in Theorem~\ref{thm:ingrass} is compatible with the above claims via the natural projection $\op{H}^j(\op{Gr}(k,n),\mathbf{I}^j(\mathscr{L}))\to\op{H}^j(\op{Gr}(k,n),\mathbf{W}(\mathscr{L}))$. Moreover, Theorem~\ref{thm:ingrass} implies Theorem~\ref{thm:invertedeta}. 
\end{remark}

The following is an analogue of  Proposition~\ref{prop:kercokerchowmod2} for the above cohomology; it will be used in the inductive proof of Theorem~\ref{thm:invertedeta}. 

\begin{proposition}
\label{prop:kercokereulereta} 
Let $1\leq k<n$. Consider the morphism 
\[
\op{e}^\perp_{n-k}\colon \op{H}^{\bullet-n+k}(\op{Gr}(k,n),\mathbf{W}(\mathscr{L}))\to \op{H}^\bullet(\op{Gr}(k,n),\mathbf{W}(\mathscr{L}\otimes\det\mathscr{S}_{n-k}^\vee))
\]
given by multiplication with the Euler class. 
\begin{enumerate}
\item The cokernel is the quotient of the cohomology algebra modulo the ideal $\langle\op{e}_{n-k}^\perp\rangle$. 
\item If $k\equiv n-k\equiv 0\bmod 2$, then the kernel of $\op{e}_{n-k}^\perp$ is the ideal $\langle \op{e}_k\rangle$. The cokernel is generated by the classes $\op{p}_2,\dots,\op{p}_k$, $\op{e}_k$, $\op{p}_2^\perp,\dots,\op{p}_{n-k-2}^\perp$ modulo the relations $\op{p}\cdot\op{p}^\perp=1$ and $\op{e}_k^2=\op{p}_k$. The classes in the kernel are products of $\op{e}_k$ with a class in the cokernel. 
\item If $k+1\equiv n-k\equiv 0\bmod 2$, then the cokernel is generated by the Pontryagin classes $\op{p}_2,\dots,\op{p}_{k-1},\op{p}_2^\perp,\dots,\op{p}_{n-k-2}^\perp$ modulo the relation $\op{p}\cdot\op{p}^\perp=1$. The kernel is the ideal $\langle\op{p}_{k-1}\op{e}_{n-k}^\perp\rangle$. 
\item If $n-k \equiv 1\bmod 2$, the multiplication map is $0$. Kernel and cokernel are the whole cohomology algebra.
\end{enumerate}
\end{proposition}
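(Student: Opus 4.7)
My plan is to deduce all four parts of the proposition from the explicit presentation of the $\mathbf{W}$-cohomology given in Theorem~\ref{thm:invertedeta}, treating the Pontryagin and Euler classes as formal generators subject to the listed relations. Part~(1) will be immediate from the definition, since for any commutative ring $A$ and element $x\in A$ the cokernel of multiplication by $x$ is by definition $A/\langle x\rangle$. Part~(4) will follow from two compatible observations: when $n-k$ is odd, the class $\op{e}_{n-k}^\perp$ is $\op{I}(F)$-torsion by Proposition~\ref{prop:eulerrel} and hence vanishes in $\mathbf{W}$-cohomology; equivalently, $\op{e}_{n-k}^\perp$ does not appear among the generators in any case of Theorem~\ref{thm:invertedeta} where $n-k$ is odd. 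Thus multiplication by $\op{e}_{n-k}^\perp$ is the zero map, and kernel and cokernel coincide with the whole algebra.

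For case~(2), with $k$ and $n-k$ both even, I will view the $\mathbf{W}$-cohomology $A$ as a free module of rank three over the Pontryagin subalgebra $B \subset A$ generated by the even Pontryagin classes subject to the Whitney sum relation, with $B$-basis $\{1,\op{e}_k,\op{e}_{n-k}^\perp\}$; the identities $\op{e}_k^2=\op{p}_k$, $(\op{e}_{n-k}^\perp)^2=\op{p}_{n-k}^\perp$, and $\op{e}_k\cdot\op{e}_{n-k}^\perp=0$ reduce every monomial in the two Euler classes to this basis. Writing the matrix of multiplication by $\op{e}_{n-k}^\perp$ in this basis then makes the cokernel transparent: setting $\op{e}_{n-k}^\perp = 0$ also forces $\op{p}_{n-k}^\perp = 0$, yielding exactly the presentation claimed; the kernel will equal $B\cdot\op{e}_k$ provided $\op{p}_{n-k}^\perp$ is a non-zero-divisor on $B$.

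Case~(3), with $k$ odd and $n-k$ even, is analogous with only one Euler class: $A$ is now free of rank two over the subring $B' = \op{W}(F)[\op{p}_2,\dots,\op{p}_{k-1},\op{p}_2^\perp,\dots,\op{p}_{n-k}^\perp]/(\op{p}\cdot\op{p}^\perp=1)$ with basis $\{1,\op{e}_{n-k}^\perp\}$ and $(\op{e}_{n-k}^\perp)^2 = \op{p}_{n-k}^\perp$. The cokernel will be read off as in case~(2), and the kernel consists of elements $b\cdot\op{e}_{n-k}^\perp$ with $b\in\operatorname{ann}_{B'}(\op{p}_{n-k}^\perp)$. The key algebraic observation will be that the Whitney sum relation $\op{p}\cdot\op{p}^\perp=1$ in degree $n-1$ collapses to the single equation $\op{p}_{k-1}\op{p}_{n-k}^\perp = 0$: since Pontryagin classes of odd index vanish in $\mathbf{W}$-cohomology and $\op{p}_k = 0$ for $k$ odd, the constraint $i+j=n-1$ with $i,j$ even, $i\le k-1$, $j\le n-k$ forces $(i,j)=(k-1,n-k)$. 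This gives the inclusion $\langle\op{p}_{k-1}\op{e}_{n-k}^\perp\rangle \subseteq \ker$.

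The main obstacle I expect is establishing the reverse inclusions in cases~(2) and~(3): that $\op{p}_{n-k}^\perp$ is a non-zero-divisor on $B$ in case~(2), and that $\operatorname{ann}_{B'}(\op{p}_{n-k}^\perp) = \langle\op{p}_{k-1}\rangle$ exactly in case~(3). The plan is to identify $B$ and $B'$ with the appropriate Pontryagin subalgebras coming from smaller (even-dimensional) Grassmannians, in effect obtaining a $\mathbf{W}$-cohomology analogue of Proposition~\ref{prop:kercokerchowmod2}, and then to exploit Poincar\'e duality / the Gorenstein property of these rings together with a degree-by-degree dimension count to pin down the annihilator precisely. Alternatively, since the total $\mathbf{W}$-cohomology is known to be a free $\op{W}(F)$-module of known rank, the verification reduces to counting generators degree by degree in the presentation, which is routine if somewhat tedious.
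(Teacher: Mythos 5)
Your overall route is the same as the paper's: the paper's proof of this proposition is literally the remark that everything can be read off the explicit presentation of Theorem~\ref{thm:invertedeta}, and parts (1) and (4) of your plan are fine as stated (for (4), $\op{e}_{n-k}^\perp$ is indeed a Bockstein class for $n-k$ odd, hence $\op{I}(F)$-torsion and zero in $\mathbf{W}$-cohomology). The genuine problem is your case (2). Once you impose the identities $\op{e}_k^2=\op{p}_k$, $(\op{e}_{n-k}^\perp)^2=\op{p}_{n-k}^\perp$ and $\op{e}_k\cdot\op{e}_{n-k}^\perp=0$ (which you must, and which part (2) of the statement presupposes; they hold because the classes pull back from ${\op{B}}\op{GL}_k$ and ${\op{B}}\op{GL}_{n-k}$, cf.\ Proposition~\ref{prop:glnw}), the algebra $A$ is \emph{not} free of rank three over the Pontryagin subalgebra $B$: multiplying $\op{e}_k\op{e}_{n-k}^\perp=0$ by $\op{e}_k$, respectively $\op{e}_{n-k}^\perp$, yields $\op{p}_k\op{e}_{n-k}^\perp=0$ and $\op{p}_{n-k}^\perp\op{e}_k=0$, so both Euler classes are killed by nonzero elements of $B$. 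For the same reason your proviso that $\op{p}_{n-k}^\perp$ be a non-zero-divisor on $B$ can never be satisfied: $B$ is a truncated (finite) $\op{W}(F)$-algebra, and the degree-$2n$ component of $\op{p}\cdot\op{p}^\perp=1$ is exactly $\op{p}_k\op{p}_{n-k}^\perp=0$. Most importantly, the kernel your matrix computation would produce, namely $B\cdot\op{e}_k$, is not what part (2) asserts: the kernel is the ideal $\langle\op{e}_k\rangle$, which also contains untwisted classes, e.g.\ $\op{p}_k=\op{e}_k^2$ in bidegree $(2k,0)$ annihilates $\op{e}_{n-k}^\perp$ but does not lie in $B\op{e}_k$ for grading reasons. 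So as planned, case (2) both rests on a false hypothesis and, even if pushed through, would prove a statement different from the proposition.

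The correct bookkeeping is $A\cong B\oplus\left(B/\op{ann}_B(\op{e}_k)\right)\op{e}_k\oplus\left(B/\op{ann}_B(\op{e}_{n-k}^\perp)\right)\op{e}_{n-k}^\perp$, with kernel of $\op{e}_{n-k}^\perp$ equal to $\op{ann}_B(\op{e}_{n-k}^\perp)\oplus\left(B/\op{ann}_B(\op{e}_k)\right)\op{e}_k$; this equals $\langle\op{e}_k\rangle$ precisely once one knows $\op{ann}_B(\op{e}_{n-k}^\perp)=\langle\op{p}_k\rangle_B$. That annihilator identification, and its analogue $\op{ann}_{B'}(\op{p}_{n-k}^\perp)=\langle\op{p}_{k-1}\rangle_{B'}$ which your case (3) correctly isolates, is the real content of parts (2) and (3); in your write-up it is only gestured at (Poincar\'e duality/Gorenstein, or a dimension count). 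The dimension-count argument used for Proposition~\ref{prop:kercokerchowmod2}(1) does transfer to $B$ and $B'$, since these are free $\op{W}(F)$-algebras with the same Poincar\'e series as a Chow ring of a smaller Grassmannian, but you should actually carry it out; your case (3) module structure ($A$ free of rank two over $B'$, with $(\op{e}_{n-k}^\perp)^2=\op{p}_{n-k}^\perp$ and $\op{p}_{k-1}\op{p}_{n-k}^\perp=0$ the only Whitney constraint in degree $2(n-1)$) is otherwise correct.
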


\begin{proof}
This follows directly from the explicit presentation of Theorem~\ref{thm:invertedeta}.
\end{proof}

\begin{proofof}{Theorem~\ref{thm:invertedeta}} 
Fix a natural number $n$. The claim for $\op{Gr}(k,n)$ is proved by induction on $k$. 

The base case is the case $\mathbb{P}^{n-1}=\op{Gr}(1,n)$, in which case the claim follows directly from the computations in \cite{fasel:ij} (realizing for instance $\op{H}^i(\mathbb{P}^n,\mathbf{W}(\mathscr{L}))\cong \op{H}^i(\mathbb{P}^n,\mathbf{I}^{i-1}(\mathscr{L}))$. In both cases there are only two non-trivial groups, one of them is $\op{H}^0(\mathbb{P}^{n-1},\mathbf{W})\cong\op{W}(F)$. If $n$ is even, then $\mathbb{P}^{n-1}$ is orientable, and the other non-trivial cohomology groups is $\op{H}^{n-1}(\mathbb{P}^{n-1},\mathbf{W})\cong\op{W}(F)$ (non-twisted coefficients), generated by the orientation class $\op{R}$. If $n$ is odd, the other non-trivial cohomology group is $\op{H}^{n-1}(\mathbb{P}^{n-1},\mathbf{W}(\det\mathscr{S}_1))\cong\op{W}(F)$, generated by $\op{e}_{n-1}^\perp$.\footnote{Note the similarity with the rational cohomology of the real projective space.}

For the inductive step, assume that Theorem~\ref{thm:invertedeta} holds for $\op{Gr}(k-1,n)$. We have to make a case distinction depending on parities of $k$ and $n$. 

\textbf{If $n-k+1$ and $k-1$ are even,} then the Euler classes $\op{e}_{k-1}$ and $\op{e}_{n-k+1}^\perp$ are non-zero. Cokernel and kernel of $\op{e}_{n-k+1}^\perp$ are described in Part (1) and (2) of Proposition~\ref{prop:kercokereulereta}. As an algebra over the image of the cokernel of $\op{e}_{n-k+1}^\perp$, the cohomology of $\mathscr{S}(k,n)=\mathscr{Q}_{n-k+1}\setminus\op{Gr}(k-1,n)$ is an exterior algebra, generated by 1 and the class $\op{R}$ in  degree $(n-1,0)$ which is a lift of $\op{e}_{k-1}$ along $\partial$. This follows from the localization sequence for the bundle $\mathscr{Q}_{n-k+1}$. 

For the second localization, for the bundle $\mathscr{S}_k$, we first note that the Euler classes $\op{e}_k$ and $\op{e}_{n-k}^\perp$ are zero. We check what we can say about the map $q^\ast$: we have the Pontryagin classes $\op{p}_2,\dots,\op{p}_{k-1}$, and these are mapped to their counterparts in the cohomology of $\mathscr{S}_k\setminus\op{Gr}(k,n)$, by Proposition~\ref{prop:geomgrass}. By exactness, all the classes in the image of the restriction morphism $q^\ast$ will have trivial image under the boundary map $\partial$. Also, the class $\op{R}$ from degree $(n-1,0)$ has image under $\partial$ in degree $(n-k,1)$; in the case at hand, $n-k$ is odd, so there are no nontrivial elements in this degree and therefore $\partial \op{R}=0$. 

Now we need to determine which classes have nontrivial image under $\partial$. The class $\op{e}_{k-1}$ from the cokernel of $\op{e}_{n-k+1}^\perp$ necessarily maps to $1$ under $\partial$. By the derivation property,  more generally a product $p\cdot\op{e}_{k-1}$ of the Euler class with a polynomial $p$ in the Pontryagin classes $\op{p}_2,\dots,\op{p}_{k-1}$ will map under $\partial$ to $p$, viewed as an element of the cohomology of $\op{Gr}(k,n)$. 

At this point, we see that the candidate presentation is surjective: the Pontryagin classes $\op{p}_i$ and $\op{p}_i^\perp$ are mapped to their counterparts in the cohomology of $\mathscr{S}(k,n)$; and the same is true for the class $\op{R}$. The only missing generator of the cohomology of $\mathscr{S}(k,n)$ is the Euler class $\op{e}_{k-1}$, but we saw above that this class has nontrivial boundary. Injectivity of the candidate presentation follows similarly. 

\textbf{If both $n-k+1$ and $k-1$ are odd,} then the Euler classes $\op{e}_{k-1}$ and $\op{e}_{n-k+1}^\perp$ are zero. In particular, via the first localization sequence for the bundle $\mathscr{Q}_{n-k+1}\to\op{Gr}(k-1,n)$, the cohomology of $\mathscr{S}(k,n)$ consists of two copies of the cohomology of $\op{Gr}(k-1,n)$; one of the copies obviously generated by $1$ in degree $(0,0)$, the other generated by a class in bidegree $(n-k,1)$ which is a lift of $1\in \op{H}^0$ along the boundary map. 

Now for the second bundle $\mathscr{S}_k\to\op{Gr}(k,n)$, both Euler classes $\op{e}_k$ and $\op{e}_{n-k+1}^\perp$ are nontrivial. We check what we can say about the restriction map $q^\ast$ in the corresponding localization sequence. In the cohomology of $\op{Gr}(k,n)$, we have the Pontryagin classes and these are mapped under $q^\ast$ to their counterparts in the cohomology of $\mathscr{S}(k,n)$. The class in bidegree $(n-k,1)$ (which arose as lift of 1 in the first localization sequence) lifts to the Euler class $\op{e}_{n-k}^\perp$. 

The class $\op{R}$ from the cohomology of $\mathscr{S}(k,n)$ has nontrivial boundary; its degree is $(n-1,0)$ and its image under the boundary map has degree $(n-k,1)$, so the class $\op{R}$ is mapped exactly to the Euler class $\op{e}_{n-k}^\perp$. 

Consequently, we see that the candidate presentation is bijective. 

\textbf{If $n-k+1$ is even and $k-1$ is odd,} then the Euler class $\op{e}_{n-k+1}^\perp$ is nontrivial. Cokernel and kernel of $\op{e}_{n-k+1}^\perp$ are described in Proposition~\ref{prop:kercokereulereta}. The cokernel is generated by the Pontryagin classes, and the kernel is the ideal $\langle \op{p}_{k-2}\op{e}_{n-k+1}^\perp\rangle$. The class $\op{p}_{k-2}\op{e}_{n-k+1}^\perp$ lives in degree $(n-k+3,1)$ and consequently lifts along the boundary map $\partial$ to a class in degree $(2n-3,0)$. 

Now for the second bundle $\mathscr{S}_k\to\op{Gr}(k,n)$, the Euler class $\op{e}_k$ is also nontrivial. We check what happens in the associated localization sequence. The cokernel of the multiplication by $\op{e}_k$ on the candidate presentation is generated by the Pontryagin classes which map to their counterparts in the cohomology of $\mathscr{S}(k,n)$. The kernel of the Euler class on the candidate presentation is the ideal generated by $\op{e}_k\op{p}_{n-k-1}^\perp$ in degree $(2n-k-2,1)$. 

The Pontryagin classes in the cokernel all map to their counterparts under the restriction map $q^\ast$. The class in degree $(2n-3,0)$ (which arose as a lift of $\op{p}_{k-2}\op{e}_{n-k+1}^\perp$ maps to $\op{e}_k\op{p}_{n-k-1}^\perp$ in degree $(2n-k-2,1)$. 

Consequently, we see that the description of the cohomology of $\op{Gr}(k,n)$, given in Theorem~\ref{thm:invertedeta}, is true if and only if it is true for $\op{Gr}(k-1,n)$. Therefore, this argument also settles the case where $n-k+1$ is odd and $k-1$ is even.
\end{proofof}

\subsection{Putting the pieces together}

We are now in the position to prove the theorems about the structure of $\mathbf{I}$-cohomology of the Grassmannians $\op{Gr}(k,n)$. 

\begin{proofof}{Proposition~\ref{prop:inredgrass}}
  For all characteristic classes except $\op{R}$ the claims on their reductions follow from Theorem~\ref{thm:glnin} (2). The injectivity of $\rho$ on the image of $\beta_{\mathscr{L}}$  follows from Theorem~\ref{thm:invertedeta}, in combination with Lemma~\ref{lem:wsplit}, via the splitting of $\mathbf{I}$-cohomology as direct sum of $\mathbf{W}$-cohomology and the image of $\beta_{\mathscr{L}}$.

  It remains to identify the reduction of $\op{R}$. This follows by tracing through the inductive proof of Theorem~\ref{thm:glnin}, noting that $\op{R}$ arises via boundary maps from Euler classes. 
\end{proofof}

\begin{proofof}{Theorem~\ref{thm:ingrass}}
Again, we use the splitting of $\mathbf{I}$-cohomology as direct sum of $\mathbf{W}$-cohomology and the image of $\beta_{\mathscr{L}}$ which follows from Theorem~\ref{thm:invertedeta}, in combination with Lemma~\ref{lem:wsplit}.

  The relations (1)--(4) claimed in the theorem are satisfied because they are already satisfied on the level of ${\op{B}}\op{GL}_n$, by Theorem~\ref{thm:glnin} and the Whitney sum formulas in Propositions~\ref{prop:chern} and \ref{prop:pwhitney}. The relation (5) involving $\op{R}$ has two components: the claim on multiplication with torsion classes follows the injectivity of $\rho$ on the image of $\beta$ given by Proposition~\ref{prop:inredgrass}. The claim $\op{R}^2=0$ in $\mathbf{W}$-cohomology follows from Theorem~\ref{thm:invertedeta}. The image of $\op{R}^2$ under the projection to $\op{Im}\beta$ can be computed in mod 2 Chow theory, where we have $\rho(\op{R}^2)=\overline{\op{c}}_{k-1}\overline{\op{c}}_{n-k}^\perp\overline{\op{c}}_k\overline{\op{c}}_{n-k-1}^\perp=0$. In particular, we get a well-defined map from the candidate presentation to the total $\mathbf{I}$-cohomology ring of $\op{Gr}(k,n)$. 
  
  To show that the generators listed in Theorem~\ref{thm:ingrass} generate the $\mathbf{I}$-cohomology ring we again first show that all the torsion classes in the image of $\beta$ are accounted for. Knowing the mod 2 Chow ring of the Grassmannians, cf.~Proposition~\ref{cor:chowmod2grass}, this follows as in the proof of Theorem~\ref{thm:glnin} by considering the image of $\op{Sq}^2_{\mathscr{L}}$. Then the surjectivity for $\mathbf{W}$-cohomology follows from Theorem~\ref{thm:invertedeta}. 

  To show injectivity, i.e., that all relations in the cohomology ring are accounted for, we note that the $\op{W}(F)$-torsion free part generated by the Pontryagin classes, as well as Euler classes or $\op{R}$ whenever applicable, has exactly the relations (2), (3) and (5), by Theorem~\ref{thm:invertedeta}. So it suffices to investigate relations among classes in the image of $\beta$. Since $\rho$ is injective on the image of $\beta_{\mathscr{L}}$, it suffices to show that all relations appearing in $\op{Ch}^\bullet(\op{Gr}(k,n))$ arise from those for ${\op{B}}\op{GL}_n$ and the Whitney sum formulas. This follows from the presentation of the mod 2 Chow rings in Proposition~\ref{cor:chowmod2grass} and Theorem~\ref{thm:glnin}. 
\end{proofof}

\subsection{An example}

We discuss the argument for non-orientable Grassmannians in the special case comparing $\mathbb{P}^4$ and $\op{Gr}(2,5)$. The following computation also indicates how one may go about to establish the formulas for $\mathbf{I}$-cohomology directly withouth the $\beta$-$\mathbf{W}$-decomposition. A complete version of this argument can be found in the first version of the present paper on the arXiv.

First, we consider the localization sequence associated to the tautological rank 4 bundle on $\mathbb{P}^4$ which has the form
\[
\cdots\to \op{H}^j(\mathbb{P}^4,\mathbf{I}^j(\mathscr{L}))\to \op{H}^j(T,\mathbf{I}^j(\mathscr{L}))\to \op{H}^{j-3}(\mathbb{P}^4,\mathbf{I}^{j-4}(\mathscr{L}(1)))\xrightarrow{\op{e}_4^\perp} \cdots
\]
where $T$ is the complement of the zero section of the rank $4$ bundle on $\mathbb{P}^4$. From the shape of the localization sequence, we see that there are isomorphisms 
\[
\begin{array}{ll}
\op{H}^j(\mathbb{P}^4,\mathbf{I}^j(\mathscr{L}))\cong \op{H}^j(T,\mathbf{I}^j(\mathscr{L})) & \textrm{ for } j\leq 2, \textrm{ and } \\ \op{H}^{j+3}(T,\mathbf{I}^{j+3}(\mathscr{L}))\cong \op{H}^{j}(\mathbb{P}^4,\mathbf{I}^{j-1}(\mathscr{L}(1))) & \textrm{ for } j\geq 2
\end{array}
\]

The complicated bit is given by two exact sequences. First, we have 
\[
0\to \op{H}^3(\mathbb{P}^4,\mathbf{I}^3(\mathscr{L}))\to \op{H}^3(T,\mathbf{I}^3(\mathscr{L})) \to \op{H}^0(\mathbb{P}^4,\mathbf{I}^{-1}(\mathscr{L}(1)))\to \op{H}^4(\mathbb{P}^4,\mathbf{I}^3(\mathscr{L}))
\]
In the case where $\mathscr{L}=\mathscr{O}$, then the first and third terms in the exact sequence are trivial and so is $\op{H}^3(T,\mathbf{I}^3)$. In the case where $\mathscr{L}=\mathscr{O}(1)$, the third term is $\op{W}(F)\cdot 1$ and the last term is $\op{W}(F)\cdot \op{e}_4^\perp$ so that multiplication with the Euler class $\op{e}_4^\perp$ is an isomorphism. Consequently, we have an isomorphism $\op{H}^3(T,\mathbf{I}^3(1))\cong \op{H}^3(\mathbb{P}^4,\mathbf{I}^3(1))\cong\mathbb{Z}/2\mathbb{Z}$, generated by $\op{e}_1^3$. 

The second exact sequence is
\[
\op{H}^0(\mathbb{P}^4,\mathbf{I}^0(\mathscr{L}(1)))\to \op{H}^4(\mathbb{P}^4,\mathbf{I}^4(\mathscr{L}))\to \op{H}^4(T,\mathbf{I}^4(\mathscr{L}))\to \op{H}^1(\mathbb{P}^4,\mathbf{I}^0(\mathscr{L}(1)))\to 0
\]
In the case where $\mathscr{L}=\mathscr{O}$, then the first and last terms in the exact sequence are trivial and we get an isomorphism $\op{H}^4(T,\mathbf{I}^4)\cong\op{H}^4(\mathbb{P}^4,\mathbf{I}^4)\cong \mathbb{Z}/2\mathbb{Z}$, generated by $\op{e}_1^4$. In the case where $\mathscr{L}=\mathscr{O}(1)$, the first morphism is multiplication by the Euler class which is an isomorphism. In particular, we get an isomorphism $\op{H}^4(T,\mathbf{I}^4(1))\to \op{H}^1(\mathbb{P}^4,\mathbf{I}^0(1))\cong 0$.  

Now we can consider the localization sequence for the tautological rank $2$ bundle on $\op{Gr}(2,5)$ which has the form
\[
\cdots\xrightarrow{\op{e}_2} \op{H}^j(\op{Gr}(2,5),\mathbf{I}^j(\mathscr{L}))\to \op{H}^j(T,\mathbf{I}^j(\mathscr{L}))\to \op{H}^{j-1}(\op{Gr}(2,5),\mathbf{I}^{j-2}(\mathscr{L}(1)))\xrightarrow{\op{e}_2} \cdots
\]
Because of cohomology vanishing in negative degrees, we have an isomorphism $\op{H}^0(\op{Gr}(2,5),\mathbf{I}^0(\mathscr{L}))\cong\op{H}^0(T,\mathbf{I}^0(\mathscr{L}))$, and we note that this is isomorphic to the respective cohomology of $\mathbb{P}^4$. Next, there is an exact sequence
\[
0\to \op{H}^1(\op{Gr}(2,5),\mathbf{I}^1(\mathscr{L}))\to \op{H}^1(T,\mathbf{I}^1(\mathscr{L})) 
\]
For $\mathscr{L}=\mathscr{O}$, the last group is trivial, implying triviality of $\op{H}^1(\op{Gr}(2,5),\mathbf{I}^1(\mathscr{L}))$. For $\mathscr{L}=\mathscr{O}(1)$, the last group is $\mathbb{Z}/2\mathbb{Z}$. The explicit generator $\beta_{\mathscr{O}(1)}(1)$ maps to a generator of the last group and this implies that $\op{H}^1(\op{Gr}(2,5),\mathbf{I}^1(\mathscr{L}))\cong\mathbb{Z}/2\mathbb{Z}$. 

For $\op{H}^2$, we have an exact sequence 
\begin{eqnarray*}
\op{H}^0(\op{Gr}(2,5),\mathbf{I}^0(\mathscr{L}(1)))&\to& \op{H}^2(\op{Gr}(2,5),\mathbf{I}^0(\mathscr{L}))\to\\
\to \op{H}^2(T,\mathbf{I}^2(\mathscr{L}))&\to& \op{H}^1(\op{Gr}(2,5),\mathbf{I}^0(\mathscr{L}(1)))
\end{eqnarray*}
For $\mathscr{L}=\mathscr{O}$, the outer groups are both zero and hence $\op{H}^2(\op{Gr}(2,5),\mathbf{I}^0(\mathscr{L}))\cong\mathbb{Z}/2\mathbb{Z}$. For $\mathscr{L}=\mathscr{O}(1)$, the first map is an isomorphism mapping $1$ to $\op{e}_2$. Note that only using the localization sequence at this point would require knowledge of the restriction morphism  $\op{H}^1(\op{Gr}(2,5),\mathbf{I}^2(\mathscr{L}))\to\op{H}^1(T,\mathbf{I}^2(\mathscr{L}))$ to show that $\op{H}^2(\op{Gr}(2,5),\mathbf{I}^2(\mathscr{O}(1)))$ is isomorphic to $\op{W}(F)$ and not a proper quotient. 

The remaining cohomology groups can be computed similarly, producing exactly the results from Example~\ref{ex:gr25}. 

\subsection{Remarks on oriented Grassmannians}

We shortly formulate the analogous results for the Chow--Witt rings of the oriented Grassmannians. Recall that the $\mathbb{A}^1$-fundamental group of the Grassmannians is $\bm{\pi}_1^{\mathbb{A}^1}(\op{Gr}(k,n))\cong \mathbb{G}_{\op{m}}$ since the Grassmannians are $\op{GL}_{n-k}$-quotients of the Stiefel varieties $\op{GL}_n/\op{GL}_k$ which are highly $\mathbb{A}^1$-connected. The oriented Grassmannians $\widetilde{\op{Gr}}(k,n)$ are the $\mathbb{A}^1$-universal covers of the Grassmannians $\op{Gr}(k,n)$. Explicitly, they are given as the complement of the zero section of the line bundle $\det\mathscr{S}_k$. For the Chow--Witt rings of the oriented Grassmannians $\widetilde{\op{Gr}}(k,n)$, we only have the trivial duality because they are $\mathbb{A}^1$-simply connected. 

We can formulate a result analogous to Theorem~\ref{thm:ingrass} for the oriented Grassmannians. The proof of the result proceeds exactly along the lines of the proofs for $\op{Gr}(k,n)$. Some results concerning the $\mathbf{W}$-cohomology of the oriented Grassmannians can be deduced from the work of Ananyevskiy in \cite{ananyevskiy}. 

\begin{theorem}
Let $F$ be a perfect field of characteristic $\neq 2$, and let $1\leq k<n$.
\begin{enumerate}
\item   There is a cartesian square of $\mathbb{Z}$-graded $\op{GW}(F)$-algebras: 
\[
\xymatrix{
\widetilde{\op{CH}}^\bullet(\widetilde{\op{Gr}}(k,n)) \ar[r] \ar[d] & \ker \partial \ar[d]^{\bmod 2}\\
\op{H}^\bullet(\widetilde{\op{Gr}}(k,n),\mathbf{I}^\bullet) \ar[r]_>>>>>>\rho & \op{Ch}^\bullet(\widetilde{\op{Gr}}(k,n))
}
\]
\item 
The cokernel of the Bockstein morphism 
\[
\beta\colon \op{CH}^j(\widetilde{\op{Gr}}(k,n))\to \op{H}^{j+1}(\widetilde{\op{Gr}}(k,n),\mathbf{I}^{j+1})
\]
is described exactly as in Theorem~\ref{thm:invertedeta}, except that there is no additional $\mathbb{Z}/2\mathbb{Z}$-grading and the Euler classes live in the cohomology with trivial duality. 
\item The reduction morphism 
\[
\rho\colon \op{H}^{j+1}(\widetilde{\op{Gr}}(k,n),\mathbf{I}^{j+1})\to \op{CH}^{j+1}(\widetilde{\op{Gr}}(k,n))
\]
is injective on the image of the Bockstein morphism $\beta$. In particular, the image of Bockstein can be determined from the Wu formula for the Steenrod squares on the mod 2 Chow ring of $\widetilde{\op{Gr}}(k,n)$. 
\end{enumerate}
\end{theorem}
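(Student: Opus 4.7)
The strategy mirrors the approach used for $\op{Gr}(k,n)$: compute $\mathbf{W}$-cohomology via a localization sequence to deduce freeness, apply Lemma~\ref{lem:wsplit} to get the splitting of $\mathbf{I}$-cohomology, and use 2-torsion-freeness of the Chow ring together with \cite[Proposition 2.11]{chow-witt} for the cartesian square.

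The starting point is the explicit description of $\widetilde{\op{Gr}}(k,n)$ as the complement of the zero section of $\det\mathscr{S}_k$. The resulting $\mathbb{G}_{\op{m}}$-bundle $\pi\colon \widetilde{\op{Gr}}(k,n)\to \op{Gr}(k,n)$ has the decisive property that $\pi^\ast\det\mathscr{S}_k$ is canonically trivialized by the tautological nowhere-vanishing section. This is exactly the geometric reason why the two dualities on $\op{Gr}(k,n)$ collapse to a single one on $\widetilde{\op{Gr}}(k,n)$, and why the Euler classes $\op{e}_k$ and $\op{e}_{n-k}^\perp$ (which sit in $\mathbf{W}(\det\mathscr{S}_k^\vee)$-cohomology downstairs) become untwisted upstairs.

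The core computational step is to analyze the associated localization sequence in $\mathbf{W}$-cohomology. Using $\mathbb{A}^1$-invariance to identify cohomology of the total space of $\det\mathscr{S}_k$ with that of $\op{Gr}(k,n)$, this takes the form
\[
\cdots\to \op{H}^{i-1}(\op{Gr}(k,n),\mathbf{W}(\det\mathscr{S}_k^\vee))\xrightarrow{\cdot\op{e}_1(\det\mathscr{S}_k)}\op{H}^i(\op{Gr}(k,n),\mathbf{W})\xrightarrow{\pi^\ast}\op{H}^i(\widetilde{\op{Gr}}(k,n),\mathbf{W})\xrightarrow{\partial}\cdots
\]
The key observation is that the Gysin map is multiplication by the Euler class $\op{e}_1(\det\mathscr{S}_k)=\beta_{\det\mathscr{S}_k^\vee}(1)$, which is a class in $\op{Im}\beta$ and therefore \emph{vanishes} in $\mathbf{W}$-cohomology. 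Hence the long exact sequence decomposes into short exact sequences
\[
0\to\op{H}^i(\op{Gr}(k,n),\mathbf{W})\xrightarrow{\pi^\ast}\op{H}^i(\widetilde{\op{Gr}}(k,n),\mathbf{W})\xrightarrow{\partial}\op{H}^i(\op{Gr}(k,n),\mathbf{W}(\det\mathscr{S}_k^\vee))\to 0,
\]
which split since both outer terms are free $\op{W}(F)$-modules by Theorem~\ref{thm:invertedeta}. This gives the presentation in (2) by collapsing the $\mathbb{Z}/2\mathbb{Z}$-grading and transporting the Pontryagin and Euler classes along $\pi^\ast$, using the canonical trivialization of $\pi^\ast\det\mathscr{S}_k$.

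The freeness of $\mathbf{W}$-cohomology then yields (3) as a direct application of Lemma~\ref{lem:wsplit}. For (1), the same localization argument applied to Chow theory reduces to the known 2-torsion-freeness of $\op{CH}^\bullet(\op{Gr}(k,n))$, so that \cite[Proposition 2.11]{chow-witt} delivers the cartesian square. The image of $\beta$ in $\mathbf{I}$-cohomology, and hence the description of $\ker\partial$, can be read off from the Wu formula on $\op{Ch}^\bullet(\widetilde{\op{Gr}}(k,n))$, exactly as in the proof of Theorem~\ref{thm:ingrass}. The main obstacle is verifying that no new multiplicative relations appear on $\widetilde{\op{Gr}}(k,n)$ beyond those inherited from $\op{Gr}(k,n)$: the additive splitting is formal, but one must check that the relations $\op{p}\cdot\op{p}^\perp=1$ and $\op{e}_k\cdot\op{e}_{n-k}^\perp=0$ (originally living in the appropriate twisted cohomology pieces) remain the full set of relations after pullback trivializes the twist. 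This is achieved by comparing total dimensions of graded pieces via the splitting above and invoking Theorem~\ref{thm:invertedeta}.
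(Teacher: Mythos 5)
Your route is genuinely different from the paper's: the paper disposes of this theorem by asserting that the proof ``proceeds exactly along the lines of the proofs for $\op{Gr}(k,n)$'', i.e.\ by rerunning the inductive localization arguments, whereas you leverage the already-proved Theorem~\ref{thm:invertedeta} via the $\mathbb{G}_{\op{m}}$-bundle $\pi\colon\widetilde{\op{Gr}}(k,n)\to\op{Gr}(k,n)$. The core of that idea is sound and attractive: the Gysin map in the localization sequence for the zero section of $\det\mathscr{S}_k$ is multiplication by $\op{e}_1(\det\mathscr{S}_k)=\beta_{\det\mathscr{S}_k^\vee}(1)$, which lies in $\op{Im}\beta$ and hence dies in $\mathbf{W}$-cohomology, so the long exact sequence breaks into short exact sequences with free outer terms; freeness of $\op{H}^\bullet(\widetilde{\op{Gr}}(k,n),\mathbf{W})$ follows, and Lemma~\ref{lem:wsplit} then gives part (3) and the additive form of part (2) without redoing any induction.

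Two steps do not hold up as written, however. Most seriously, your justification of part (1) fails: \cite[Proposition 2.11]{chow-witt} requires $\op{CH}^\bullet(\widetilde{\op{Gr}}(k,n))$ itself to have no $2$-torsion, and this does \emph{not} follow from $2$-torsion-freeness of $\op{CH}^\bullet(\op{Gr}(k,n))$. The localization sequence gives $\op{CH}^\bullet(\widetilde{\op{Gr}}(k,n))\cong\op{CH}^\bullet(\op{Gr}(k,n))/\langle\op{c}_1(\mathscr{S}_k)\rangle$, and this quotient can contain $2$-torsion: for $\op{Gr}(3,6)$ one computes $\op{CH}^\bullet(\widetilde{\op{Gr}}(3,6))\cong\mathbb{Z}[\op{c}_2,\op{c}_3]/(\op{c}_2^2,\,2\op{c}_2\op{c}_3,\,\op{c}_3^2)$, so that $\op{CH}^5(\widetilde{\op{Gr}}(3,6))\cong\mathbb{Z}/2\mathbb{Z}$ (equivalently, multiplication by $\sigma_1$ from $\op{CH}^4$ to $\op{CH}^5$ of $\op{Gr}(3,6)$ has elementary divisors $1,1,2$). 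So the injectivity half of the fiber-product comparison needs a genuinely different argument in such cases; this is precisely the delicate point of part (1), which your proposal (and, to be fair, the paper's one-line sketch) does not address. Secondly, in part (2) the rank comparison only upgrades a surjection of free modules of equal finite rank to an isomorphism; surjectivity of the candidate presentation is exactly what still has to be proved. Concretely, you must show that the pullbacks of the twisted generators hit the cokernel of $\pi^\ast$ on the untwisted part, i.e.\ compute $\partial(\pi^\ast\op{e}_k)$ and $\partial(\pi^\ast\op{e}_{n-k}^\perp)$ up to units --- the analogue of the boundary computations in Proposition~\ref{prop:glnw} and in the proof of Theorem~\ref{thm:invertedeta}. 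The canonical trivialization of $\pi^\ast\det\mathscr{S}_k$ by the tautological section does not by itself supply this; it only tells you where the classes live, not that their boundaries generate.
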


\begin{remark}
A result like the above should be true for all flag varieties (at least in type A). The cokernel of the Bockstein should have the same presentation as the rational cohomology of the real realization (but of course as a $\op{W}(F)$-algebra). The Bockstein classes should all be detected on the mod 2 Chow ring so that the structure of the torsion can be determined just from the knowledge of the Steenrod squares. 
\end{remark}


\begin{thebibliography}{MVW06}

\bibitem[AF15]{AsokFaselSecondary}
A. Asok and J. Fasel. Secondary characteristic classes and the Euler class. Doc. Math. 2015, Extra vol.: Alexander S. Merkurjev's sixtieth birthday, 7--29. 


\bibitem[AF16]{AsokFaselEuler}
A. Asok and J. Fasel. Comparing Euler classes. Q.J. Math. 67 (2016), 603--635.

\bibitem[Ana15]{ananyevskiy} A. Ananyevskiy. The special linear version of the projective bundle theorem. Compositio Math. 151 (2015), 461--501.

\bibitem[BC12]{lax:similitude}
P. Balmer and B. Calm{\`e}s. Bases of total Witt groups and lax-similitude. J. Algebra Appl. 11, no. 3 (2012), 1250045.



\bibitem[Bro82]{brown}
E.H. Brown, jr. The cohomology of ${\op{B}}\op{SO}_n$ and ${\op{B}}\op{O}_n$ with integer coefficients. Proc. Amer. Math. Soc. 85 (1982), 283--288.


\bibitem[\v{C}ad99]{cadek}
M. \v{C}adek. The cohomology of ${\op{B}}\op{O}(n)$ with twisted integer coefficients. J. Math. Kyoto Univ. 39-2 (1999), 277--286.

\bibitem[CK13]{casian:kodama}
L. Casian and Y. Kodama. On the cohomology of real Grassmann manifolds. arXiv:1309.5520v1. 

\bibitem[CS99]{casian:stanton}
L. Casian and R.J. Stanton. Schubert cells and representation theory. Invent. Math. 137 (1999), 461--539.

\bibitem[EH17]{3264}
D. Eisenbud and J. Harris. 3264 and all that -- a second course in algebraic geometry. Cambridge University Press. 2016.


\bibitem[Fas07]{fasel:chowwitt}
J. Fasel. The Chow--Witt ring. Doc. Math. 12 (2007), 275--312. 

\bibitem[Fas08]{fasel:memoir}
J. Fasel. Groupes de Chow--Witt. M{\'e}m. Soc. Math. Fr. (N.S.) No. 113, 2008.

\bibitem[Fas13]{fasel:ij}
J. Fasel. The projective bundle theorem for $\mathbf{I}^j$-cohomology. J. K-theory 11 (2013), 413--464.

\bibitem[HW17]{chow-witt}
J. Hornbostel and M. Wendt. Chow--Witt rings of classifying spaces for symplectic and special linear groups. J. Topol. 12, no. 3 (2019), 915--965.

\bibitem[HWXZ19]{4real}
J. Hornbostel, M. Wendt, H. Xie and M. Zibrowius. Properties of the real realization functor. Preprint, arXiv:1911.04150.


\bibitem[MS74]{milnor:stasheff}
J. Milnor and J. Stasheff. Characteristic classes. Annals of Math. Studies 76, Princeton University Press, 1974.

\bibitem[Sad17]{sadykov}
R. Sadykov. Elementary calculation of the cohomology rings of real Grassmann manifolds. Pacific J. Math. 289 (2017), 443--447.


\bibitem[Tot03]{totaro:witt}
B. Totaro. Non-injectivity of the map from the Witt group of a variety to the Witt group of its function field. J. Inst. Math. Jussieu 2 (2003), no. 3, 483--493. 

\bibitem[Wen19]{schubert}
M. Wendt. Oriented Schubert calculus in Chow--Witt rings of Grassmannians. To appear in Contemp. Math., arXiv:1808.07296.

\end{thebibliography}
\end{document}